\tikzset{
	block/.style={
		draw, 
		rectangle, 
		minimum height=1cm, 
		minimum width=2cm, align=center
	}, 
	line/.style={->,>=latex'}
}
\definecolor{mygray}{gray}{.9}
\newtheorem{lemma}{Lemma}
\newtheorem{remark}{Remark}
\newcommand{\R}{\mathbb{R}}
\DeclareMathOperator*{\argmin}{argmin}
\numberwithin{equation}{section}
\numberwithin{definition}{section}
\numberwithin{remark}{section}
\numberwithin{theorem}{section}
\numberwithin{proposition}{section}
\numberwithin{lemma}{section}
\numberwithin{remark}{section}
\numberwithin{example}{section}
\numberwithin{figure}{section}
\numberwithin{conjecture}{section}
\numberwithin{table}{section}
\begin{document}
	\pagestyle{plain}
	\title{\bf Quality of approximating a mass-emitting object by a point source in a diffusion model}
	\author{Qiyao Peng$^{1*}$, Sander C. Hille$^{1}$\\
	{\footnotesize 1. Mathematical Institute, Faculty of Science, Leiden University.} \\
	{\footnotesize	Neils Bohrweg 1, 2333 CA, Leiden, The Netherlands}\\
   {\footnotesize * Correspondence: q.peng@math.leidenuniv.nl}}
	\maketitle
	
	\begin{abstract}
		For the sake of computational efficiency and for theoretical purposes, in mathematical modelling, the Dirac Delta distributions are often utilized as a replacement for cells or vesicles, since the size of cells or vesicles is much smaller than the size of the surrounding tissues. Here, we consider the scenario that the cell or the vesicle releases the diffusive compounds to the immediate environment, which is modelled by the diffusion equation. Typically, one separates the intracellular and extracellular environment and uses homogeneous Neumann boundary condition for the cell boundary (so-called spatial exclusion approach), while the point source approach neglects the intracellular environment. We show that extra conditions are needed such that the solutions to the two approaches are consistent. We prove a necessary and sufficient condition for the consistency. Suggested by the numerical results, we conclude that an initial condition in the form of Gaussian kernel in the point source approach compensates for a time-delay discrepancy between the solutions to the two approaches in the numerical solutions. Various approaches determining optimal amplitude and variance of the Gaussian kernel have been discussed. 
		
		{\bf Keywords:} Diffusion equation, point source, Dirac delta distribution, numerical analysis, Gaussian kernel
	\end{abstract}

	\section{Introduction}\label{Sec_intro}
	\noindent
	In mathematical modelling, it is a common practice to replace a spatial object with a negligible size by a point-particle, which has no volume. In classical mechanics, objects may be replaced by point masses \citep{Westphal_1968}. In electrostatics, point charge \citep{jackson1999classical} is a theoretical concept, used e.g. to describe the electric field that results from a spatial distribution of charge. In biological modelling, for instance, chemotaxis \citep{Stock_2009, Wadhams_2004} or in wound healing, when one deals with a large-scale wound that is in the order of millimeter or even centimeter, individual cells are regarded as particles and point sources that secrete signalling molecules, which then diffuse in the surrounding environment \citep{Peng2020,Peng2021}. 
	
	These are all idealisations that are convenient not only for the modeller and theorist. For computational efficiency of model simulation it may be even necessary to employ point-particles. In particular when the objects are moving, interacting or have internal dynamics of their own, operating at several different scales. For example, simulation of several moving objects that secrete a diffusing compound in a finite element approach can easily become quite cumbersome \citep{Jiao_2019,Secomb_2007}. An implementation in terms of several moving and `mass-emitting' point-particles may then be better tractable \citep{Painter_2003}, in particular when the number of objects becomes large -- but not so large that a continuum description with densities or concentrations is a proper representation.
	
	What representation is proper, is determined by the underlying research question for the modelling effort. It yields a tolerance for the deviation from observations, which in turn yields a tolerance for deviations between solutions of different modelling approaches. Moreover, in view of this question and the implied tolerance it can be more or less relevant that a point source cannot represent the spatial heterogeneity in shape or in flux density of compound over the boundary of a truly spatial object. 
	
	Motivated by biological applications, in this paper we are concerned with the mathematical question of the assessment of the quality of approximation when a model for diffusion in the environment of a compound that is secreted by several stationary spatial objects is replaced by that with point sources at central locations of these objects. We do so by numerical simulations of two-dimensional spatial configurations, in which the objects are equal, of circular shape, with homogeneous constant flux density over their boundary (see Figure \ref{Fig_intro}\subref{fig_domain} for the schematic set-up, for a single object). As such, it can be viewed as a follow-up to an analytic approach in \citet{HMEvers2015} (for a single object) and a numerical approach in the setting of wound healing dynamics in \citet{Peng2022JCAM,Peng2022MATCOM}. Heterogeneity in shape and flux density, and movement of objects will be considered in follow-up research. If mass-emitting point sources cannot approximate circular stationary objects with homogeneous flux density over their boundary within tolerance, then this cannot be expected either in the heterogeneous or in the non-stationary setting. 
	
	\begin{figure}[h!]
		\centering
		\begin{minipage}{0.48\linewidth}
			\subfigure[A schematic presentation of a single object ($\Omega_C$) within the domain $\Omega$. $\Omega\setminus\Omega_C$ is the environment into which the object emits a compound that freely diffuses there. It cannot escape through the boundary $\partial\Omega$ of the domain. A typical direction of the normal vector $\mathbf{n}$ on $\partial\Omega_c$ is indicated]{
				\includegraphics[width=0.9\textwidth]{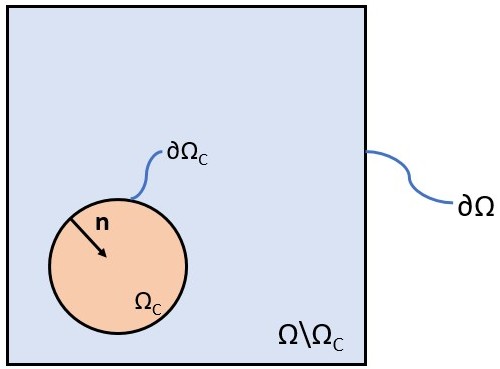}
				\label{fig_domain}}
		\end{minipage}
		\begin{minipage}{0.48\linewidth}
			\subfigure[The concentration of compound in the environment of the spatial exclusion approach at the last time step.]{
				\includegraphics[width=\textwidth]{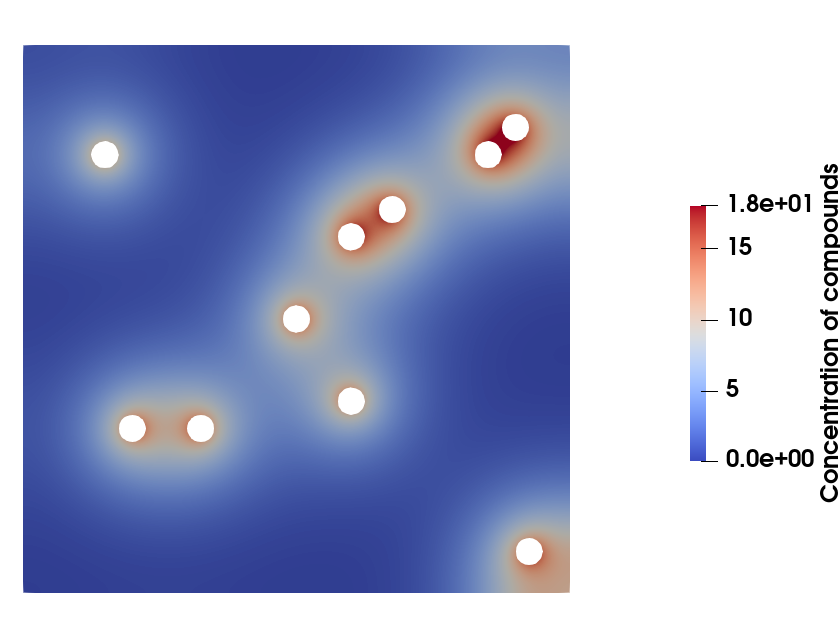}
				\label{fig_hole_0_1000}}
			\\
			\subfigure[The concentration of compound of the point source approach at the last time step.]{
				\includegraphics[width=\textwidth]{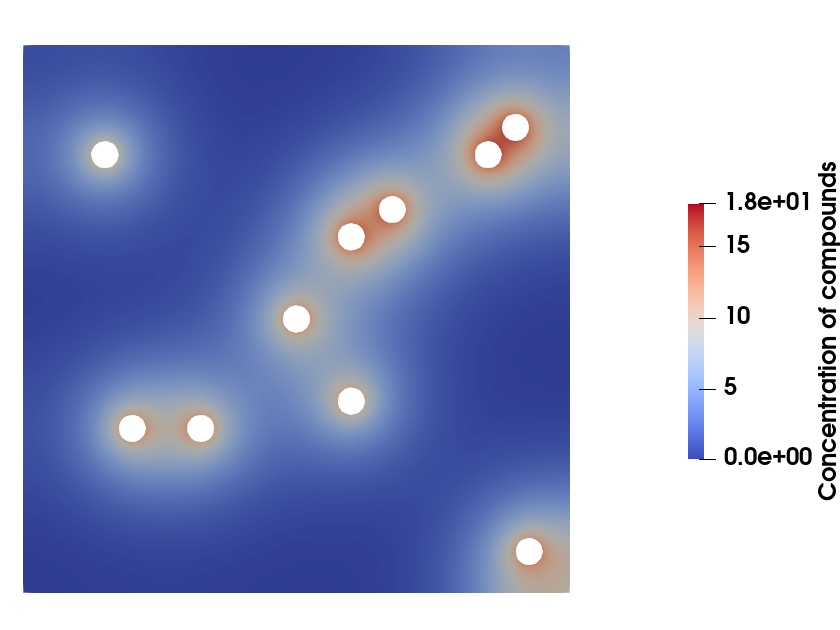}
				\label{fig_ps_ad_0_1000}}
		\end{minipage}
		\caption{Schematic presentation (not in scale) of the spatial set-up in the models, with circularly shaped stationary objects within a bounded domain (Panel (a)).   When there are multiple cells in the computational domain, differences between the simulation results of the model with spatial objects (Panel (b)) and that with mass-emitting point sources (Panel (c)) can be observed. By proper choice of parameters and initial condition in the point-source approach these can be kept within tolerance (see Section \ref{Sec_number_of_holes} for simulation details). Here, the zero homogeneous initial condition is utilized, which is pointing into the cell $\Omega_C$.} 
		\label{Fig_intro}
	\end{figure}
	
	The paper aims at exhibiting in which cases and how -- practically -- a proper representation can be made by means of mass-emitting point sources of the spatial object setting. We propose a quantification of deviations between the two approaches, find intolerable differences that occur and identify their sources, and we propose how these can be resolved, e.g. concerning the extension of initial conditions to a larger spatial domain (see Section \ref{Sec_Results_Inhomo} and see Figure \ref{Fig_intro}\subref{fig_hole_0_1000} and \subref{fig_ps_ad_0_1000} for a visual comparison). We present a (mainly numerical) analysis of consequences of particular choices for doing so on the quality of approximation. We do not claim that the proposed choice of extension by means of a Gaussian-shaped function is the best. It is a reasonable and intuitively motivated choice for which we give an approach for finding parameter settings for the extension that may keep the approximation within accepted tolerance.
	
	\subsection{Mathematical formulation of the research questions}
	
	We consider a bounded domain $\Omega\subset \R^2$ with piece-wise $C^1$-boundary $\partial\Omega$ in which there are embedded a finite number $N_c$ of non-overlapping spatial objects, called {\it cells}, which are considered as disjoint subdomains $\Omega_{C_i}$, ($i=1,\dots, N_c$), also with piece-wise $C^1$-boundaries $\partial\Omega_{C_i}$, such that these boundaries do not touch, nor touch the boundary $\partial\Omega$ of the initial domain. Write $\Omega_C:=\bigcup_{i=1}^{N_c}\Omega_{C_i}$ for the totality of cells. The complement $\Omega\setminus\overline{\Omega}_C$ will be called the {\it extracellular environment} of the cells. For each cell we select a point $\boldsymbol{x}_c^i\in\Omega_{C_i}$, which will function as center for representing cell $C_i$ by a point-particle located at that point; see Figure \ref{Fig_intro}\subref{fig_domain} for a schematic presentation of this set-up.
	
	If $\boldsymbol{v}=(v_1,v_2)\in\R^2$, we write $|\boldsymbol{v}|:=(|v_1|^2+|v_2|^2)^{1/2}$ for its Euclidean norm.
	\vskip 0.2cm
	
	\subsubsection{Spatial exclusion model}
	\label{subsec:spatial exclusion model}
	The cells secrete a compound into the environment over their boundary with prescribed flux density $\phi(\boldsymbol{x},t)\geq 0$ at $\boldsymbol{x}\in\partial\Omega_C$ and time $t\geq 0$. This compound diffuses in this environment according to Fickian diffusion with homogeneous diffusion constant $D$, without further interaction. It cannot escape the domain $\Omega$. Initially, there is a distribution $u_0(\boldsymbol{x})$ of this compound in the environment. Thus, the density $u_S(\boldsymbol{x},t)$ at time $t$ of the compound in the environment is described by the initial boundary value problem
	\begin{equation}
		\label{Eq_BVP_hole}
		(BVP_S)\quad\left\{
		\begin{aligned}
			\frac{\partial u_S(\boldsymbol{x},t)}{\partial t} - D\Delta u_S(\boldsymbol{x},t) &= 0, &\mbox{in $\Omega\backslash\bar{\Omega}_C, t>0$,}\\
			D\nabla u_S(\boldsymbol{x},t)\cdot\boldsymbol{n} &= \phi(\boldsymbol{x},t), &\mbox{on $\partial\Omega_C, t>0$,}\\
			D\nabla u_S(\boldsymbol{x},t)\cdot\boldsymbol{n} &= 0, &\mbox{on $\partial\Omega, t>0$,}\\
			u_S(\boldsymbol{x}, 0) &= u_0(\boldsymbol{x}), &\mbox{in $\Omega\backslash\bar{\Omega}_C$,}
		\end{aligned}
		\right.
	\end{equation}
	where $\boldsymbol{n}$ is the outward pointing unit normal vector to the domain boundary of $\Omega\setminus\bar{\Omega}_C$. Note that the flux density $\phi(\boldsymbol{x},t)$ is positive at $\boldsymbol{x}\in\partial\Omega_C$ where there is flux of compound into the environment $\Omega\setminus\bar{\Omega}_C$, as it is shown in Figure \ref{fig_domain}.
	
	As appropriate in the setting of the Finite Element Method (FEM) (cf. \cite{van2005numerical}) we consider the weak solution concept for $(BVP_S)$ in the spatial dimension, which is given by 
	\begin{equation*}
		(WF_S)\left\{
		\begin{aligned}
			&\text{Find $u_S(\boldsymbol{x},t)\in H^1(\Omega\backslash\bar{\Omega}_C)$, such that}\\
			&\int_{\Omega\backslash\bar{\Omega}_C}\frac{\partial u_S(\boldsymbol{x},t)}{\partial t}v_1(\boldsymbol{x},t)d\Omega + \int_{\Omega\backslash\bar{\Omega}_C} D\nabla u_S(\boldsymbol{x},t)\cdot\nabla v_1(\boldsymbol{x},t) d\Omega\\
			&- \int_{\partial\Omega_C}\phi(\boldsymbol{x},t) v_1(\boldsymbol{x},t)d\Gamma=0,\\
			&\text{for any $v_1(\boldsymbol{x},t)\in H^1(\Omega\backslash\bar{\Omega}_C)$.}
		\end{aligned}
		\right.
	\end{equation*}
	Here $d\Omega$ is the restriction of Lebesgue measure on $\R^2$ to $\Omega$ and $d\Gamma$ denotes the surface measure on $\partial\Omega_C$, that is so normalized that the Divergence Theorem holds without additional constant. 
	
	\subsubsection{Point source model}
	We want to compare the (weak) solution $u_S$ to $(BVP_S)$ with the solution to a suitable boundary value problem with point sources at the locations $\boldsymbol{x}_c^i$ instead of spatial cells. These sources will be expressed using Dirac measures $\delta_{\boldsymbol{x}_c^i}$ at $\boldsymbol{x}_c^i$, or -- equivalently -- in the form of the Schwartzian delta distribution $\delta$ (see e.g. \cite{Schwartz1951}), which is defined in any dimension by
	\[
	\langle \delta,f\rangle = f(\boldsymbol{0}),\qquad\mbox{if}\ f\in C^\infty_c(\R^n).
	\]
	Informally written, as often done, the Dirac measure at $\boldsymbol{x}_0$ can then be viewed as translation of the delta distribution:
	\[
	\int_\Omega f(\boldsymbol{x}) d\delta_{\boldsymbol{x}_0}(\boldsymbol{x})\ =\ 
	\int_\Omega \delta(\boldsymbol{x}- \boldsymbol{x}_0) f(\boldsymbol{x}) d\Omega = f(\boldsymbol{x}_0).
	\]
	The initial-boundary value problem defined by point sources is then given by
	\begin{equation}
		\label{Eq_BVP_dirac}
		(BVP_P)\quad\left\{
		\begin{aligned}
			\frac{\partial u_P(\boldsymbol{x},t)}{\partial t} - D\Delta u_P(\boldsymbol{x},t) &= \sum_{i = 1}^{N_c}\Phi_i(t)\delta(\boldsymbol{x}-\boldsymbol{x}^i_c), &\mbox{in $\Omega, t>0$,}\\
			D\nabla u_P\cdot\boldsymbol{n} &= 0, &\mbox{on $\partial\Omega, t>0$,}\\
			u_P(\boldsymbol{x}, 0) &= \bar{u}_0(\boldsymbol{x}), &\mbox{in $\Omega, t=0$.}
		\end{aligned}
		\right.
	\end{equation}
	Here, $\Phi_i(t)$ is a function that describes the flux of mass per unit time from the source at $\boldsymbol{x}_c^i$. We shall take
	\begin{equation}\label{eq:choice Phi}
		\Phi_i(t) = \int_{\partial\Omega_{C_i}}\phi(\boldsymbol{x}, t)d\Gamma.
	\end{equation}
	See Section \ref{subsec:defining influx function} for further discussion of the selection of a suitable flux function $\Phi_i$.
	
	Again, we consider weak solutions in the setting of FEM. This amounts to the following weak form -- formulated for a single cell for convenience (dropping indices $i$ that distinguish cells):
	\begin{equation*}
		(WF_P)\left\{
		\begin{aligned}
			&\text{Find $u_P(\boldsymbol{x},t)\in H^1(\Omega)$, such that}\\
			&\int_{\Omega\backslash\bar{\Omega}_C}\frac{\partial u_P(\boldsymbol{x},t)}{\partial t}v_2(\boldsymbol{x},t)d\Omega + \int_{\Omega_C}\frac{\partial u_P(\boldsymbol{x},t)}{\partial t}v_2(\boldsymbol{x},t)d\Omega +  \int_{\Omega\backslash\bar{\Omega}_C} D\nabla u_P(\boldsymbol{x},t)\nabla v_2(\boldsymbol{x},t) d\Omega \\
			&+ \int_{\Omega_C} D\nabla u_P(\boldsymbol{x},t)\nabla v_2(\boldsymbol{x},t) d\Omega = \int_{\Omega}\Phi(t)\delta(\boldsymbol{x} - \boldsymbol{x}_c)v_2(\boldsymbol{x},t)d\Omega,\\
			&\text{for any $v_2(\boldsymbol{x},t)\in H^1(\Omega)$.}
		\end{aligned}
		\right.
	\end{equation*}
	
	The singular nature of the delta distribution causes functional analytic issues. Although weak solutions to $(WF_F)$ exist and are unique (see \citet{HMEvers2015}), there does not exists a stationary solution in $H^1(\Omega)$. The solution to the elliptic boundary value problem from the balance of momentum with delta distribution is singular in the sense that for spatial dimension higher than one, no formal solutions in the finite-element space $H^1$ exist. Dealing with this singularity caused by delta distributions was discussed in \citet{Peng2022JCAM,Peng2022MATCOM}. 
	
	\subsubsection{Measures for quantitative comparison}
	\label{subsec:measures for comparison}
	
	Quantifying the differences between the solutions to the spatial exclusion model and the point source model in a meaningful way is not fully straightforward. Various choices can be made. Hence, a first question is what measure for comparison is meaningful in the context of applications. Here we shall motivate those that we selected for use in this paper and one resulting from \citet{HMEvers2015}.
	
	First note that the two solutions `live' on different spatial domains: $u_P$ is defined on $\Omega$, while $u_S$ is defined on the environment, the subset $\Omega\setminus\bar{\Omega}_C$, only. Since there is no canonical way of extending $u_S$ to the larger set $\Omega$, an objective comparison of the two solutions is possible only on the smaller set $\Omega\setminus\bar{\Omega}_C$. In accordance with this reasoning, the part of solution $u_P$ on $\bar{\Omega}_C$ has not been shown in Figure \ref{Fig_intro}\subref{fig_ps_ad_0_1000}.
	
	The following proposition substantiates the intuition that any difference in the two solutions is caused by a difference of flux over the boundary $\partial\Omega_C$ of the two solutions, provided that their initial conditions are the same on the environment of the cells.
	
	\begin{restatable}{proposition}{prop}\label{Prop_condition}
		Denote by $u_S(\boldsymbol{x},t)$ and $u_P(\boldsymbol{x},t)$ the weak solutions to the spatial exclusion model $(BVP_S)$ and the point source model $(BVP_P)$, respectively, and let $\partial\Omega_C$ be the boundary of the cells, from which the compounds are released, with normal vector $\boldsymbol{n}$ pointing into $\Omega_C$. Then
		\begin{align}
			\frac{1}{2}\frac{d}{dt} \bigl\| u_S-u_P \bigr\|^2_{L^2(\Omega\setminus\Omega_C)}\ &=\ -D \int_{\Omega\setminus\Omega_C} \bigl| \nabla(u_S -u_P) \bigr|^2 d\Omega \label{eq:L2-norm differnce}\\
			&\qquad\qquad +\ \int_{\partial\Omega_C} (u_s-u_P)(\phi-D\nabla u_P\cdot \boldsymbol{n})\,d\Gamma.  \nonumber
		\end{align}
		Assume moreover, that $u_S(\cdot,0)=u_P(\cdot,0)$ a.e. on $\Omega\setminus\Omega_C$. Then, $u_S(\boldsymbol{x},t) = u_P(\boldsymbol{x},t)$ a.e. in $\Omega\backslash\bar{\Omega}_C\times [0,\infty)$ if and only if 
		$$\phi(\boldsymbol{x},t) - D\nabla u_P(\boldsymbol{x},t)\cdot\boldsymbol{n}= 0, \qquad\mbox{ a.e. on $\partial\Omega_C\times[0,\infty)$.}$$
	\end{restatable}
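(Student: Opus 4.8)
The plan is to obtain the identity \eqref{eq:L2-norm differnce} by an energy estimate for the difference $w := u_S - u_P$ restricted to the environment $\Omega\setminus\bar{\Omega}_C$, and then to read off both implications from the structure of that identity. The first observation is that, since each source location $\boldsymbol{x}_c^i$ lies in the interior of its cell $\Omega_{C_i}$, the singular right-hand side of $(BVP_P)$ is supported in $\bar{\Omega}_C$ and is therefore inactive on the environment. Hence $u_P$ solves the homogeneous diffusion equation $\partial_t u_P = D\Delta u_P$ on $\Omega\setminus\bar{\Omega}_C$, exactly as $u_S$ does, so that $\partial_t w = D\Delta w$ there. Interior parabolic regularity away from the sources guarantees that $w(\cdot,t)$ is smooth on the open environment and, since $\partial\Omega_C$ is separated from the sources, smooth up to $\partial\Omega_C$ from the environment side; this is what makes the trace and integration-by-parts steps below legitimate.

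Next I would compute
\begin{equation*}
\frac{1}{2}\frac{d}{dt}\bigl\| w \bigr\|^2_{L^2(\Omega\setminus\Omega_C)} = \int_{\Omega\setminus\Omega_C} w\,\partial_t w\, d\Omega = D\int_{\Omega\setminus\Omega_C} w\,\Delta w\, d\Omega,
\end{equation*}
and apply Green's first identity on $\Omega\setminus\bar{\Omega}_C$. The boundary of the environment splits into $\partial\Omega$ and $\partial\Omega_C$. On $\partial\Omega$ both solutions satisfy the homogeneous Neumann condition, so $\nabla w\cdot\boldsymbol{n}=0$ and that contribution drops out. On $\partial\Omega_C$ the outward unit normal of $\Omega\setminus\bar{\Omega}_C$ is precisely the vector $\boldsymbol{n}$ pointing into $\Omega_C$; substituting the flux condition $D\nabla u_S\cdot\boldsymbol{n}=\phi$ from $(BVP_S)$ converts $D\nabla w\cdot\boldsymbol{n}$ into $\phi - D\nabla u_P\cdot\boldsymbol{n}$, and the resulting surface integral is exactly the boundary term in \eqref{eq:L2-norm differnce}. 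This establishes the identity.

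For the \emph{if} direction, assume $\phi - D\nabla u_P\cdot\boldsymbol{n}=0$ a.e. on $\partial\Omega_C\times[0,\infty)$. Then the boundary integral in \eqref{eq:L2-norm differnce} vanishes and the identity collapses to $\frac{1}{2}\frac{d}{dt}\| w\|^2_{L^2(\Omega\setminus\Omega_C)} = -D\int_{\Omega\setminus\Omega_C}|\nabla w|^2\,d\Omega \le 0$. Thus $t\mapsto \| w(\cdot,t)\|^2_{L^2(\Omega\setminus\Omega_C)}$ is nonincreasing; since the initial data agree on the environment it starts at zero, and a nonnegative nonincreasing quantity that starts at zero stays zero, whence $w\equiv 0$ and $u_S=u_P$ a.e.

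For the \emph{only if} direction I would argue directly rather than through the energy identity: if $u_S=u_P$ a.e. on the environment, then their gradients agree on the open set $\Omega\setminus\bar{\Omega}_C$, hence so do the traces of the normal derivatives on $\partial\Omega_C$ taken from the environment side; combined with $D\nabla u_S\cdot\boldsymbol{n}=\phi$ this forces $D\nabla u_P\cdot\boldsymbol{n}=\phi$, i.e. $\phi - D\nabla u_P\cdot\boldsymbol{n}=0$ a.e. One cannot read this off from \eqref{eq:L2-norm differnce} alone, because $w=0$ already annihilates the boundary integrand. The main obstacle, and the step requiring care, is the rigorous justification of Green's identity and of the normal-derivative trace $D\nabla u_P\cdot\boldsymbol{n}$ on $\partial\Omega_C$ within the weak $H^1$ setting of the finite element formulation; this is exactly where the separation of the singular source from the environment is essential, since it yields the parabolic smoothing of $u_P$ up to $\partial\Omega_C$ from the environment side that makes the flux trace well defined and the integration by parts valid.
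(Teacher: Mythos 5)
Your proof is correct, and for the energy identity and the \emph{if} direction it is essentially the paper's argument; the difference lies in how the identity is derived and in how the \emph{only if} direction is closed. The paper stays entirely in the variational setting: it subtracts the weak forms $(WF_S)$ and $(WF_P)$, uses the divergence theorem together with the pointwise equation for $u_P$ on the cell interior $\Omega_C$ to eliminate the Dirac term, and then inserts the test function $v_1=v_2=w:=u_S-u_P$ to land on \eqref{eq:L2-norm differnce}. You instead note that the singular source is inactive on $\Omega\setminus\bar{\Omega}_C$, so $w$ solves the homogeneous heat equation there, and apply Green's identity directly; the two computations are equivalent, and each needs comparable regularity of $u_P$ near $\partial\Omega_C$ (the paper to write $\Delta u_P$ pointwise on $\Omega_C$, you to justify the one-sided normal trace). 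The \emph{if} direction is the same in substance: the boundary term vanishes and the nonnegative energy, starting from zero, cannot grow. The genuine divergence is the \emph{only if} direction. The paper substitutes $u_S=u_P$ into the difference of weak forms, reduces the cell-interior terms via the divergence theorem on $\Omega_C$, and obtains $\int_{\partial\Omega_C}(\phi - D\nabla u_P\cdot\boldsymbol{n})\,v\,d\Gamma=0$ for all test functions $v$, concluding by the DuBois--Raymond lemma; you argue directly that equality of the solutions on the open environment forces equality of the normal-derivative traces from the environment side, which together with the Neumann datum of $(BVP_S)$ gives the flux identity. Your shortcut is valid because $u_P$ is smooth across $\partial\Omega_C$ (the source sits at the cell centre, away from the boundary), and you correctly observe that this direction cannot be read off from \eqref{eq:L2-norm differnce} since $w=0$ annihilates the boundary integrand; the paper's variational route buys a formulation in which the flux appears as the natural distributional normal trace, at the cost of a longer computation.
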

	\begin{proof}
		See Appendix \ref{Sec_proposition_pf}.
	\end{proof}
	\begin{remark}
		The proposition can be analogously extended to any spatial dimension.
	\end{remark}
	
	Therefore, the difference in total flux over the boundary, summed up to time $t$, i.e.
	\[
	\int_0^t \int_{\Omega_C} \phi(\boldsymbol{x},s) - D\nabla u_P(\boldsymbol{x},s)\cdot\boldsymbol{n}\; d\Gamma ds,
	\]
	yields a physically interpretable quantity of the deviation between the two solutions. It is the difference in total amount of compound in the environment between the spatial exclusion and point source approach, caused by the difference in flux over the cell boundary $\partial\Omega_C$. Following the definition of \citet{HMEvers2015}, we take the related quantity 
	\begin{equation}
		\label{Eq_c_star}
		c^*(t) := \int_0^t\|\phi(\boldsymbol{x},s) - D\nabla u_P(\boldsymbol{x},s)\cdot \boldsymbol{n}\|^2_{L^2(\partial\Omega_C)}ds.
	\end{equation}
	as a measure of comparison. For technical reasons, we chose to work with $L^2$-norm rather than the physically readily interpretable $L^1$-norm on $\partial\Omega_C$. The former is easier accessible through the Finite Element Method (FEM). Moreover, 
	\citet{HMEvers2015} gives various theoretical estimates for $c^*(t)$, for a single cell and point source. Of course, $\|f\|_{L^1(X,\mu)}\leq \mu(X)^{1/2} \|f\|_{L^2(X,\mu)}$.
	
	The $L^1$-norm difference on the environment $\Omega\setminus\bar{\Omega_C}$ compares the total amount of compound between the two solutions. The $L^1$-norm difference of the gradients yields information on differences in local fluxes that occur. For both we again prefer to use the (related) $L^2$-norms, because of FEM that is used in the numerical analysis.
	
	We shall be looking for extension of the initial condition in the spatial exclusion model to the cells $\Omega_C$, such that one arrives at an initial condition on $\Omega$ for the point source approach that yields a good approximation of the solution of the spatial exclusion model. To assess this quality it is necessary to use a relative measure of comparison. That is, to quantise a deviation in comparison to the total amount of compound in the environment of the cells. A comparison of the effect of different initial conditions with an absolute measure cannot be easily made, since different conditions tend to give different amounts of compound in the environment.

	\subsubsection{Determining influx for the point source approach} 
	\label{subsec:defining influx function}
	
	When replacing a cell $C_i$ with mass flux density $\phi_i(\boldsymbol{x},t)$ over its boundary $\partial\Omega_{C_i}$ in the direction of the environment by a point source at $\boldsymbol{x}_c^i$ with mass flux $\Phi_i(t)$, one has to decide how to relate the latter to the former. In Equation \eqref{eq:choice Phi} we made the choice that the total mass emitted by a cell at time $t$ per unit time is equal to that emitted by the point source that replaces this cell. Thus, the total mass emitted by the point source up to time $t$ is kept equal to the total mass emitted by the cell into the environment. Since mass needs on average a time of the order $\mathrm{diam}(C_i)^2/4D$ to travel from the center $\boldsymbol{x}_c^i$ to the boundary $\partial\Omega_{C_i}$, there will be a time lag between the solution $u_S$ and $u_P$ on $\partial\bar{\Omega}_C$. 
	
	Other choices for $\Phi_i(t)$ could be made. However, the definition in Equation \eqref{eq:choice Phi} seems most natural, particularly if one considers the homogeneous flux. Moreover, the time lag between the solutions $u_S$ and $u_P$ may be partially overcome by appropriately choosing the initial condition $\bar{u}_0$ on $\Omega$ of solution $u_P$ in relation to the initial condition $u_0$ on $\Omega\setminus\bar{\Omega}_C$ for $u_S$. In this paper we shall focus on the latter means for minimizing the difference between the two solutions.
	
	\subsubsection{Extension of initial condition}
	
	The major question addressed in this research is, in what way one can best replace the initial condition $u_0$ for the spatial exclusion model by an initial condition $\bar{u}_0$, defined on the whole domain $\Omega$, such that the solutions $u_S$ and $u_P$ are `optimally close'. Here we fix the flux relation as in Equation \eqref{eq:choice Phi}, as discussed in the previous section.
	
	We consider $\bar{u}_0$ as an extension of $u_0$ to the larger domain. The question is then, what function profile to take on $\Omega_{C}$, to make the extension and how smooth does $u_0$ and this profile connects at the boundary $\partial \Omega_C$?
	
	In view of Proposition \ref{Prop_condition} and the further discussion in Section \ref{subsec:measures for comparison} it seems reasonable to require that the flux over the boundary $\partial\Omega_{C_i}$ in the point source model, created by the combined effect of the selected initial condition profile in $\Omega_{C_i}$ and the mass flux $\Phi_i(t)$ from the point source at $\boldsymbol{x}_c^i$ are as close to $\phi_i(\boldsymbol{x},t)$ as possible over all time for which the solution is computed. Again, various quantifiers for this difference may be selected.
	
	\subsection{Major assumptions}
	
	In this paper we limit our attention to a specific setting of the questions raised above. First of all, we consider the general question of the quality of approximating by a point source only in two spatial dimensions. This is mainly to reduce computational intensity of the simulations. Moreover, the preceding theoretical work \cite{HMEvers2015} also considered a two-dimensional spatial domain.
	
	The spatial objects are all taken circular in shape, with equal radius $r$. A point source will emit compounds in a symmetric manner into its environment in case of isotropic diffusion, which we consider here. One therefore knows {\it a priori}, that replacing a non-circular object by a point source will not only create error because of the reduction of the spatial object to a point, but also because a non-symmetric object (for rotation) cannot be expected to produce a symmetric emission profile, typically. The appropriate approximation of a generally shaped object by one that is circular is another question, that would be best considered separately.
	
	The boundary $\partial\Omega_{C_i}$ of the circular cell $C_i$ of radius $r$, centred at $\boldsymbol{x}_c^i$ is parameterized by the angle $\theta\in[0,2\pi)$ relative to a reference direction. The parameterisation $\gamma_i(\theta)$ is such that the measure $d\Gamma$ on $\partial\Omega_{C_i}$, which is -- recall -- normalized such that the Divergence Theorem holds without additional constants (see Section \ref{subsec:spatial exclusion model}), is given by
	\begin{equation}
		\int_{\partial\Omega_{C_i}} f\,d\Gamma = r\int_0^{2\pi} f\bigl(\gamma_i(\theta)\bigr)\,d\theta, \qquad \mbox{for}\ f\in C(\partial\Omega_{C_i}).
	\end{equation}
	
	The simplest case of emission is one that is constant in time and spatially homogeneous over the circular object. We consider that case here. That is,
	\begin{equation}
		\phi(\boldsymbol{x},t) = \phi_0 > 0\qquad \mbox{for all}\ \boldsymbol{x}\in\partial\Omega_C,\ t\geq 0.
	\end{equation}
	Spatially non-homogeneous flux over a circular boundary will allow to approximate flux emitting from a non-circular object.  Thus, this is certainly an important aspect of the approximation question to consider. This setting is currently being investigated. Results on this more general setting will appear elsewhere.
	
	We expect that the diffusion constant will influence the quality of approximation. Therefore, we shall vary $D$ around a central value $D_0$ by one order of magnitude.
	
	\subsection{Structure of the paper}
	
	The manuscript is structured as follows: The preparations for numerical analysis and the approach taken are presented in Section \ref{Sec_sol_concept}. Furthermore, the results of  numerical simulations in Sections \ref{Sec_Results_Somo} and \ref{Sec_Results_Inhomo}. The first considers solutions with zero initial condition on the environment $\Omega\setminus\Omega_C$, with non-zero extension to $\Omega$. The second considers non-zero -- but constant -- initial condition on $\Omega\setminus\Omega_C$ and suitable extension of this initial condition to $\Omega$. Finally, conclusions and the outlook of this work are discussed in Section \ref{Sec_Conclu}.

	\section{Preparation for Numerical Analysis}\label{Sec_sol_concept}
	
	For the numerical analysis we shall use non-dimensional versions of the models $({BVP}_S)$ and $(BVP_P)$, in particular, their weak formulations. 
	
	\subsection{Non-dimensional models}
	\label{subsec:non-dimensional models}
	\noindent 
	Denote the circular cell region centered at $\boldsymbol{x}_c$ and radius $r$ by $\Omega_C:=\mathbb{B}(\boldsymbol{x}_c, r)$. The entire domain is $\Omega:= [-L,L]\times[-L,L]$. In the dimensionless model and simulations thereof we scale space such that cell diameter becomes $1$. Thus, we get spatial variables $\xi$ and computational domains given by
	\begin{equation*}
		\boldsymbol{\xi} := \frac{\boldsymbol{x}}{2r},\ \hat{\Omega}_C: = \mathbb{B}\bigl(\boldsymbol{\xi}_c, \displaystyle\frac{1}{2}\bigr),\ \hat{\Omega} := \left[-\frac{L}{2r},\ \frac{L}{2r}\right]\times\left[-\frac{L}{2r},\ \frac{L}{2r}\right].
	\end{equation*}
	Time is scaled by $\tau_0$ such that dimensionless time and the diffusion constant become
	\begin{equation*}
		\tau:= \frac{t}{\tau_0},\quad \hat{D}: = \frac{D\tau_0}{4r^2}.
	\end{equation*}
	Here, $\tau_0$ is chosen such that $\hat{D}=1$ corresponds to $D=D_0$, the central value, and the relevant range for varying $D$ becomes $\hat{D}\in[0.1,10]$. At last, we will consider $\phi(\boldsymbol{x},t)=\phi_0$ constant in time and space. We scale compound density by $u^*$ such that the flux density in the new coordinates becomes $1$. That is,
	\begin{equation*}
		\frac{\phi_0\tau_0}{2ru^*} = 1,\quad \gamma := \frac{u}{u^*}.  
	\end{equation*}
	Then, $(BVP_S)$ yields in a dimensionless system given by 
	\begin{equation*}
		(BVP'_S)\left\{
		\begin{aligned}
			\frac{\partial \gamma_S(\boldsymbol{\xi},\tau)}{\partial \tau} - \hat{D}\Delta_{\boldsymbol{\xi}} \gamma_S(\boldsymbol{\xi},\tau) &= 0, &\mbox{in $\hat{\Omega}\backslash\bar{\hat{\Omega}}_C, \tau>0$,}\\
			\hat{D}\nabla_{\boldsymbol{\xi}} \gamma_S(\boldsymbol{\xi},\tau)\cdot\boldsymbol{n}_{\boldsymbol{\xi}} &= 1, &\mbox{on $\partial\hat{\Omega}_C, \tau>0$,}\\
			\hat{D}\nabla_{\boldsymbol{\xi}} \gamma_S(\boldsymbol{\xi},\tau)\cdot\boldsymbol{n}_{\boldsymbol{\xi}} &= 0, &\mbox{on $\partial\hat{\Omega}, \tau>0$,}\\
			\gamma_S(\boldsymbol{\xi}, 0) &= \frac{u_0(\boldsymbol{\xi})}{u^*}, &\mbox{in $\hat{\Omega}\backslash\bar{\hat{\Omega}}_C$.}
		\end{aligned}
		\right.
	\end{equation*}
	A similar transformation can be done analogously in the point source approach:
	\begin{equation*}
		(BVP'_P)\left\{
		\begin{aligned}
			\frac{\partial \gamma_P(\boldsymbol{\xi},\tau)}{\partial \tau} - \hat{D}\Delta_{\boldsymbol{\xi}} \gamma_P(\boldsymbol{\xi},\tau) &= \hat{\Phi}\delta(\boldsymbol{\xi}-\boldsymbol{\xi}_c), &\mbox{in $\hat{\Omega}, \tau>0$,}\\
			\hat{D}\nabla_{\boldsymbol{\xi}} \gamma_P\cdot\boldsymbol{n}_{\boldsymbol{\xi}} &= 0, &\mbox{on $\partial\hat{\Omega}, \tau>0$,}\\
			\gamma_P(\boldsymbol{\xi}, 0) &= \frac{\bar{u}_0(\boldsymbol{\xi})}{u^*}, &\mbox{in $\hat{\Omega}$.}
		\end{aligned}
		\right.
	\end{equation*}
	Here,
	\begin{equation}\label{eq:phi rescaled}
		\hat{\Phi}\ =\ \frac{\Phi\tau_0}{u^*\cdot 4r^2}\ = \ \frac{2\pi r\phi_0 \tau_0}{u^*\cdot 4r^2}\ =\ \pi,
	\end{equation}
	under the given scalings.
	The factor $4r^2$ in the denominator of the first expression for $\hat{\Phi}$  in Equation \eqref{eq:phi rescaled} results from the different behaviour under the scaling transformation $x\mapsto \xi=x/(2r)$ of Dirac measure compared to Lebesgue measure in two dimensions, which is the reference measure for the density functions on the left-hand side in the differential equation.
	\vskip 0.2cm
	
	Having made explicit the non-dimensional models ($BVP'_S$) and ($BVP'_P$) and having observed their similarity to the dimensional models ($BVP_S$) and ($BVP_P$), we continue in our exposition with using the original notation of $u_S$, $u_P$, etc. for the dimension-free solutions and by omitting the `hats' on parameters. Moreover, we stress that in analysis and simulations we use the spatially homogeneous and constant (rescaled) flux density, i.e. $\phi(\boldsymbol{x},t) = 1$, in $(BVP_S)$ and corresponding flux at the point source  $\Phi(t) = \Phi = \pi$, according to Equation \eqref{eq:phi rescaled}.
	
	\subsection{Computational Approach}\label{Sec_Maths_Models}
	\noindent
	In \citet{HMEvers2015}, the authors worked analytically on the upper bound of the global difference between the solutions to these two approaches. Inspired by it, a subsequent research question is `Under what circumstances and to what extent, these two approaches are consistent?'  Therefore, the numerical simulations are employed in this manuscript to quantify and visualize the difference between the two solutions.
	A schematic representation of the mesh structure of the two approaches are displayed in Figure \ref{Fig_mesh}. In other words, for the spatial exclusion approach, we exclude the cell region from the computational domain.
	\begin{figure}[h!]
		\centering
		\subfigure[The spatial exclusion approach]{
			\includegraphics[width = 0.48\textwidth]{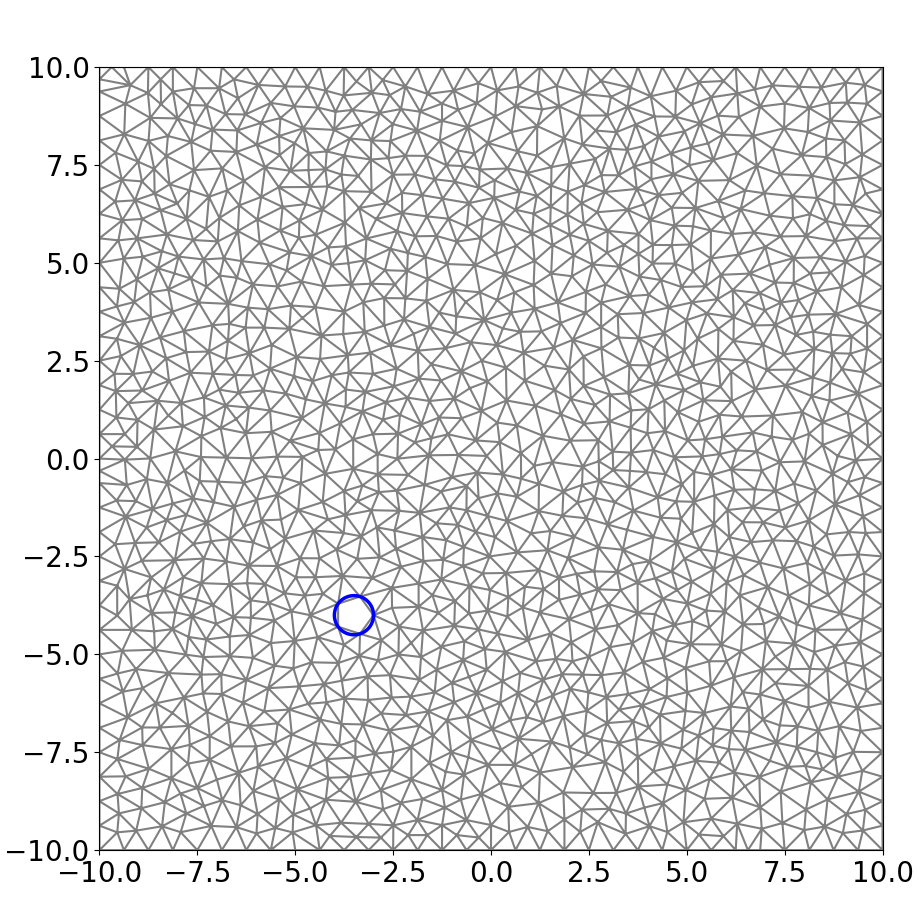}
			\label{fig_mesh_hole}}
		\subfigure[The point source approach]{
			\includegraphics[width = 0.48\textwidth]{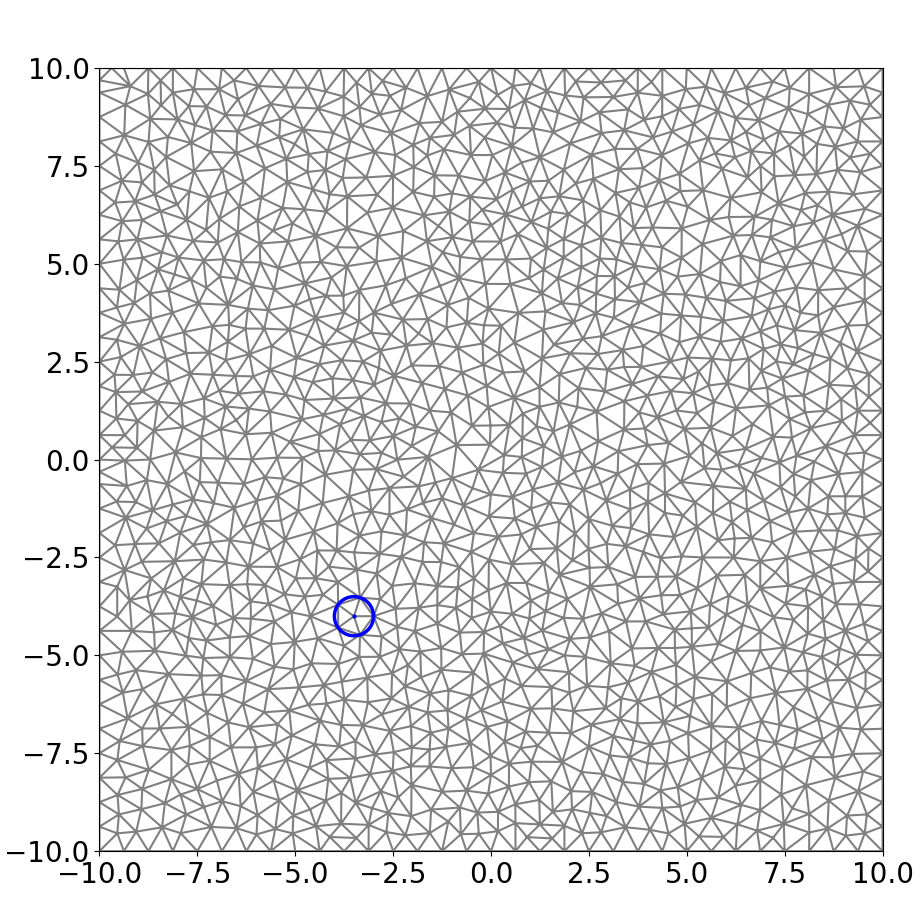}
			\label{fig_mesh_dirac}}
		\caption{A schematic representation of the two approaches and the mesh structure. The computational domain is $(-10, 10)\times(-10, 10)$, and the cell is located at $(-3.5, -4)$ with radius $\displaystyle\frac{1}{2}$. The blue circles are the predefined cell boundary.} 
		\label{Fig_mesh}
	\end{figure}
	
	The parameter values that are used in this section are shown in Table \ref{tab:para_all}, if there is no further specification. Note that the parameters are dimensionless. In this manuscript, finite-element methods and backward Euler are used for the numerical simulations, for the spatial discretisation and time integration respectively. Particularly, we use Python 3.10 and \texttt{FEniCS} package \citep{AlnaesEtal2015} version 2019.2.0.dev0. We bare in mind that in the implementation, instead of a smooth circle, the cell region is constructed by a series of mesh points as a polygon. In the point source approach, numerically, the initial condition at the mesh points on the cell boundary $\partial\Omega_C$ is taken the same as in the extracellular environment $\Omega\backslash\Omega_C$.
	\begin{table}[h!]\footnotesize
		\centering
		\caption{Parameter values used in Section \ref{Sec_Results_Somo} and \ref{Sec_Results_Inhomo}, corresponding to the dimensionless systems derived in Section \ref{subsec:non-dimensional models}}
		\begin{tabular}{m{2cm}m{2cm}m{8cm}}
			\toprule
			{\bf Parameter} & {\bf Value} & {\bf Description}  \\
			\midrule
			$\hat{D}$ & $0.1$ & Diffusion coefficient \\
			$L/(2r)$ & 10 & Size of the computational domain\\
			$\Delta \tau$ & $0.04$ & Time step \\
			$T$ & $40$ & Total time\\
			$h$ & $0.127$ & Average mesh size\\
			\bottomrule
		\end{tabular}
		\label{tab:para_all}
	\end{table}
	
	
	\subsection{Preliminary Results: Effects of Varying the Diffusion Constant}
	\label{subsec:preliminary results}
	\noindent
	A diffusing particle that is released at the centre of a circular cell of radius $R=1/2$ reaches the cell boundary on average on a time scale $R^2/D$. When the time step $\Delta\tau$ is larger than $R^2/D$, the global difference between the norms of the solutions in both approaches is not really significant; see Figure \ref{Fig_norm_D_10_01}\subref{fig_norm_D_10_all}-\subref{fig_norm_D_10_diff}. This is mainly due to the fact that within one time step, the compounds have already reached the boundary of the cell in the point source approach. Subsequently, the diffusion basically starts from the boundary of the cell at $t = \Delta\tau$. However, when we decrease the diffusion coefficient significantly, longer time is needed for the compounds to reach the boundary of the cell in the point source approach. In other words, the influx from the point source firstly needs to `fill' the intracellular space $\Omega_C$, before it can mimic diffusion from the boundary of the cell.

	\begin{figure}[h!]
		\centering
		\subfigure[$D = 10$]{
			\includegraphics[width = 0.48\textwidth]{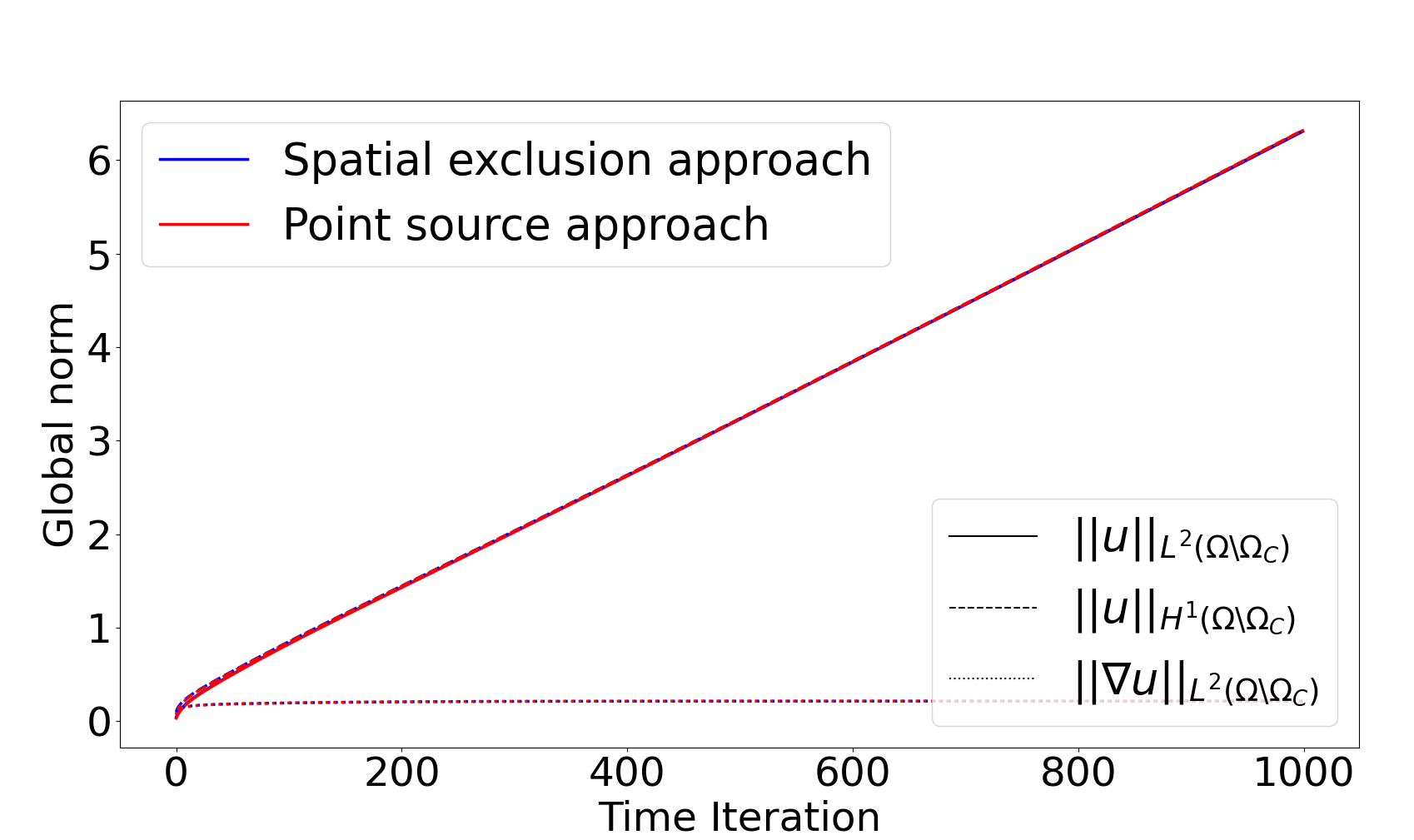}
			\label{fig_norm_D_10_all}}
		\subfigure[$D = 10$]{
			\includegraphics[width = 0.48\textwidth]{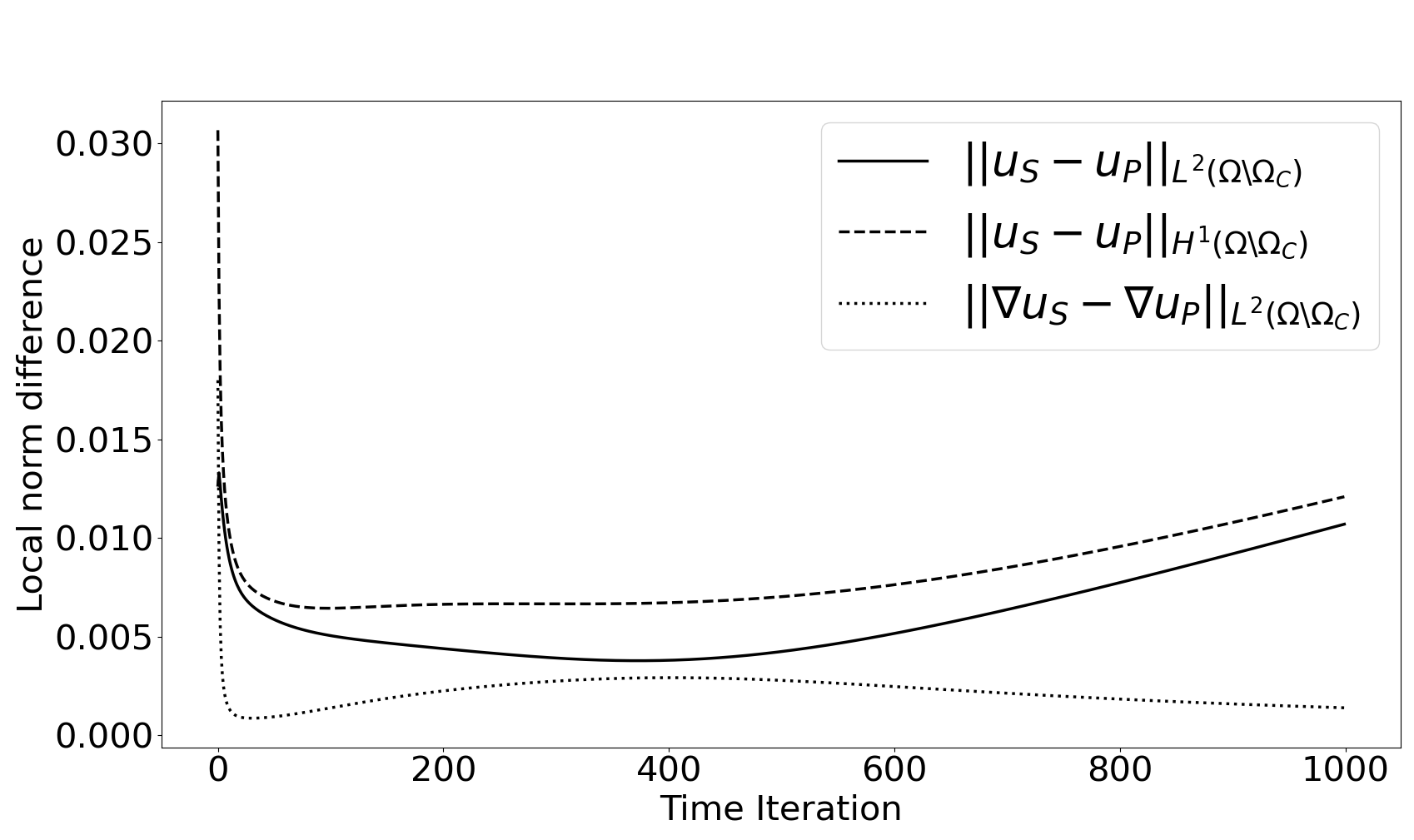}
			\label{fig_norm_D_10_diff}}
		\subfigure[$D = 1$]{
			\includegraphics[width = 0.48\textwidth]{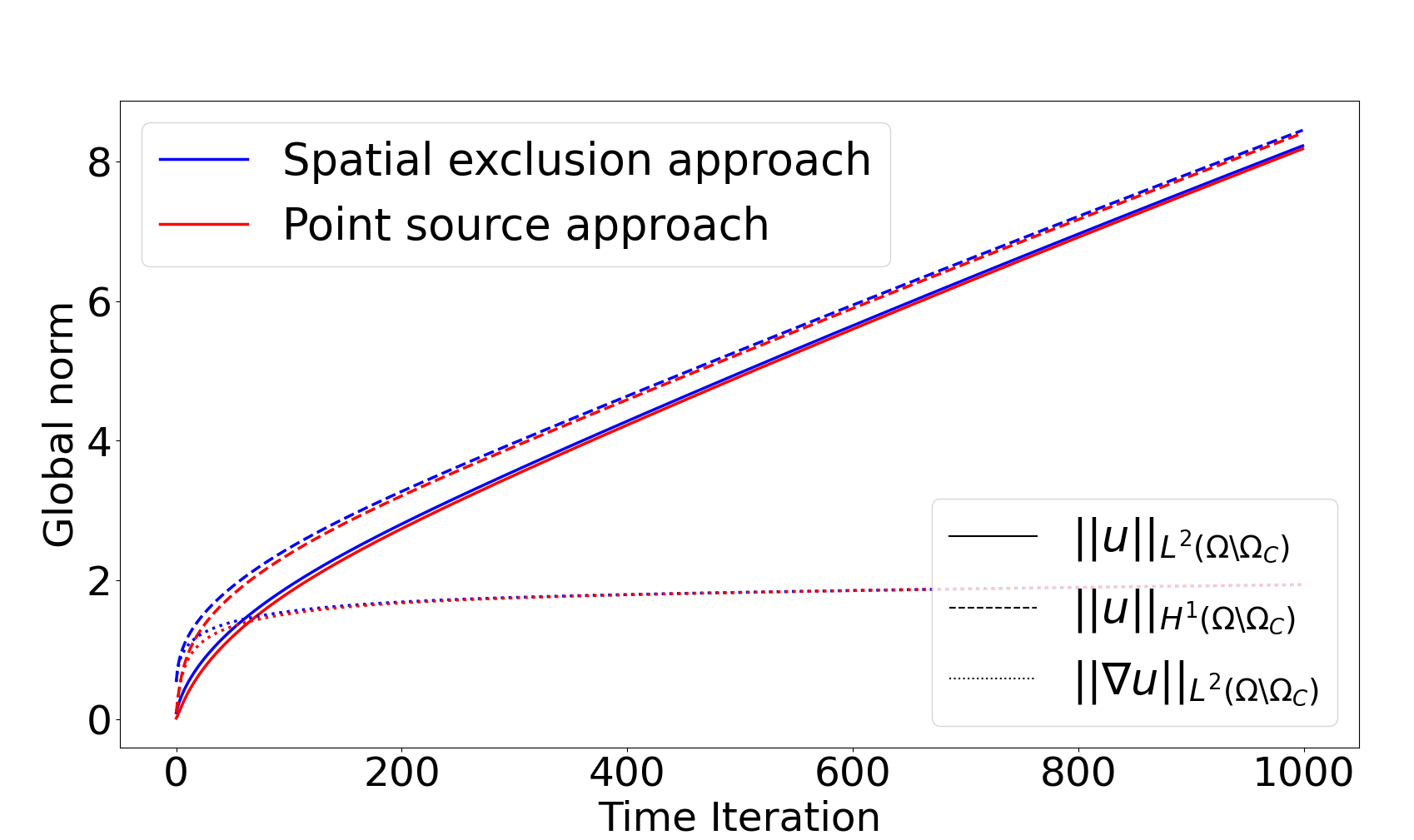}
			\label{fig_norm_D_1_all}}
		\subfigure[$D = 1$]{
			\includegraphics[width = 0.48\textwidth]{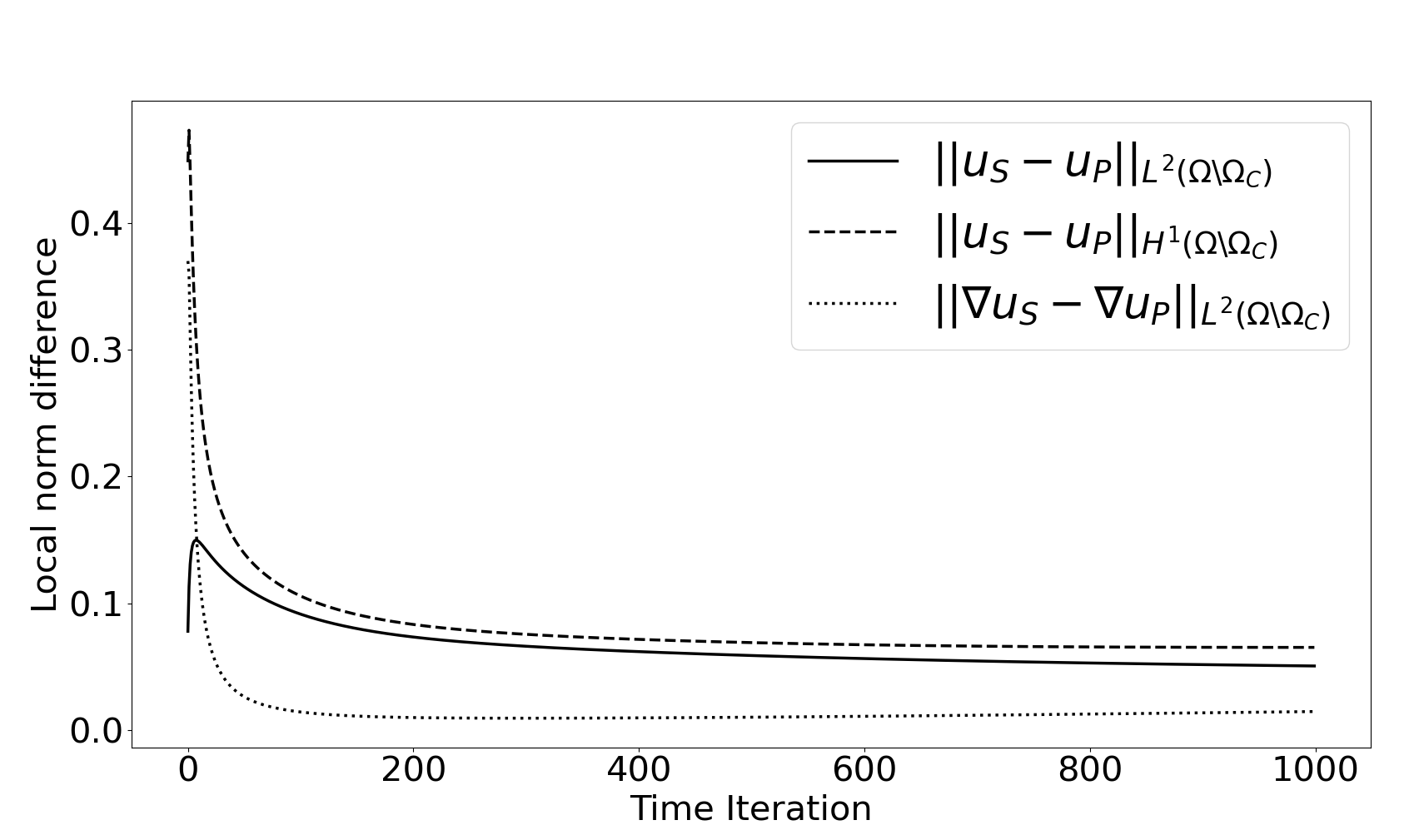}
			\label{fig_norm_D_1_diff}}
		\subfigure[$D = 0.1$]{
			\includegraphics[width = 0.48\textwidth]{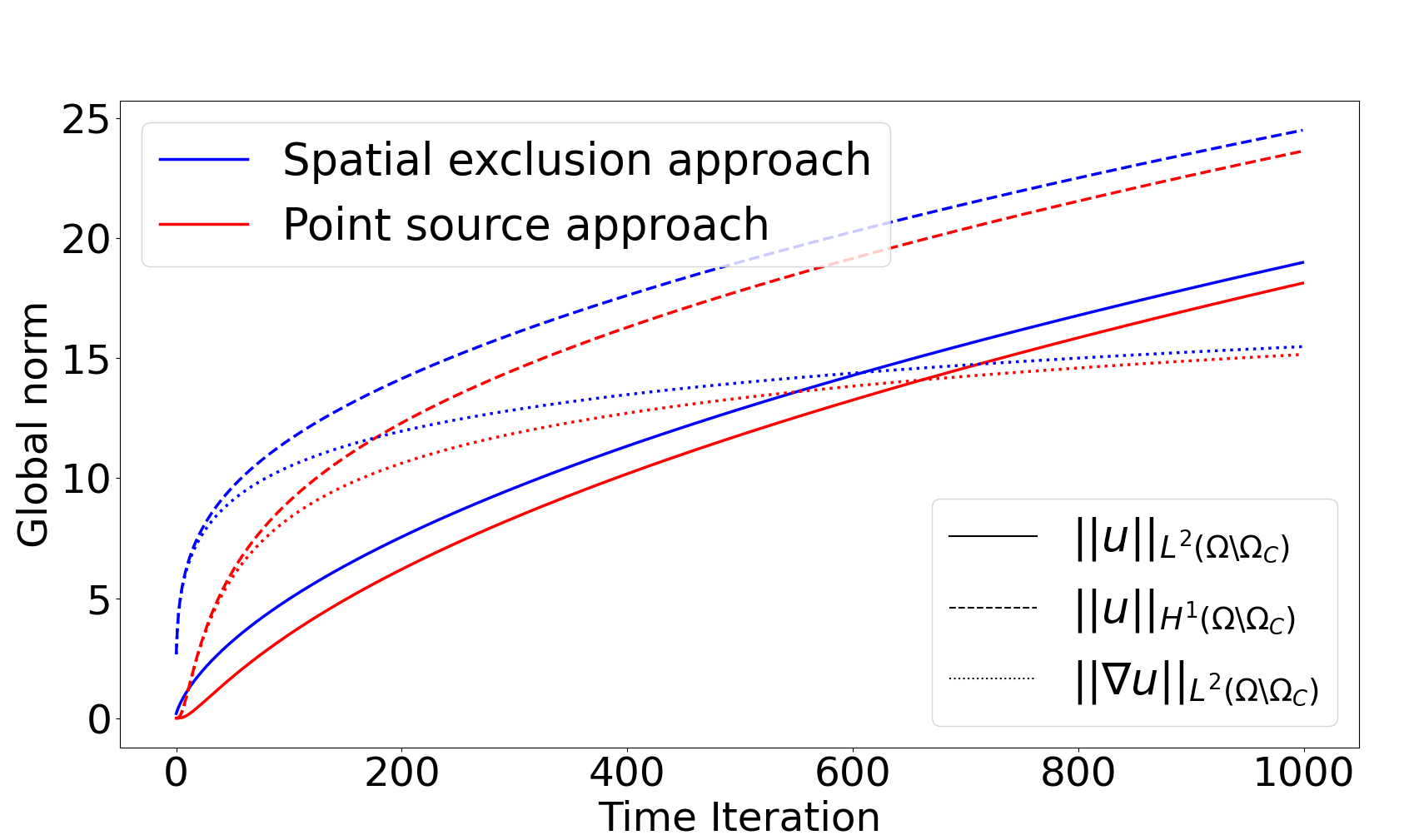}
			\label{fig_norm_D_01_all}}
		\subfigure[$D = 0.1$]{
			\includegraphics[width = 0.48\textwidth]{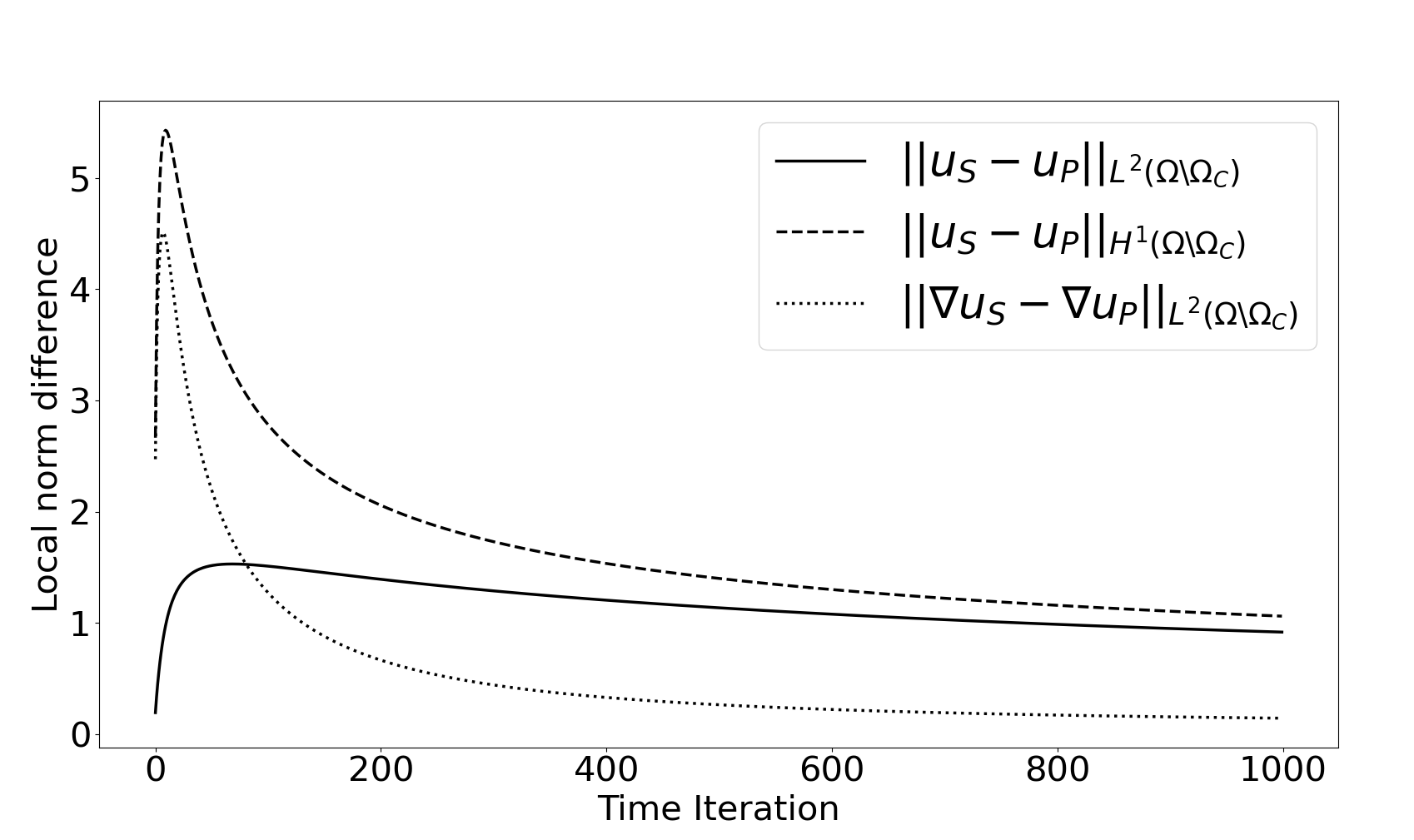}
			\label{fig_norm_D_01_diff}}
		\caption{Several simulations were conducted with varying diffusion coefficients, namely, $D \in \{10, 1, 0.1\}$. Zero initial conditions were taken in all cases. In the subfigures in the left column, various global norms (i.e. $\|u\|_{L^2(\Omega\backslash\Omega_C)},\|u\|_{H^1(\Omega\backslash\Omega_C)}, \|\nabla u\|_{L^2(\Omega\backslash\Omega_C)}$) are shown for both approaches. In the subfigures in the right column, we show the local norm differences of all the categories of aforementioned norms. The increase in $L^2$-norm of the solution on $\Omega\setminus \Omega_C$ -- hence also in $H^1$-norm -- in (a), (c) and (e) is due to the constant influx of compound in both spatial exclusion and point source approach. Here, $H^1(\Omega\setminus\Omega_C)$ is equiped with the Hilbert space norm $\|f\|_{H^1}^2 = \|f\|_{L^2}^2 + \|\, |\nabla f|\,\|_{L^2}^2$.}
		\label{Fig_norm_D_10_01}
	\end{figure}
	
	Notice that the local difference between the two solutions increases in all the norms with decreasing diffusivity $D$; see Figure \ref{Fig_norm_D_10_01} \subref{fig_norm_D_10_diff}, \subref{fig_norm_D_1_diff} and \subref{fig_norm_D_01_diff}. This is caused by the effect that even though compounds may have reached the boundary of the cell in the point source approach after some time, these can never fully `catch up' with the amount of mass already released by this boundary from the start in the spatial exclusion approach. Thus, one gets a systematic delay, visible in Figure \ref{Fig_norm_D_10_01}.
	
	Furthermore, as time proceeds, all the norms of differences slowly decrease for $D = 1$ and $D = 0.1$. The $L^2$-norm is controlled by expression in Equation \eqref{eq:L2-norm differnce}. After a transient, the difference in flux density over the boundary $\partial\Omega_C$ becomes small; see e.g. Figure \ref{fig_c_star} that indicates that concentration differences are small, hence also the differences in flux. Thus, the boundary-integral term in Equation \eqref{eq:L2-norm differnce} converges to 0. The first term, with the difference in gradients of solutions, will also converge to 0. It depends on the speed of convergence of both terms, whether the sign of the derivative of the $L^2$-norm of the difference will be positive or negative. This phenomenon can be observed in Figure \ref{Fig_norm_D_10_01} \subref{fig_norm_D_10_diff}, for $D=10$, where the $L^2$-norm of the difference is increasing towards the end of the simulation time interval. 
	
	This phenomenon seems to persist with decreasing $D$ until it disappears for $D$ between $1$ and $5$. To check whether this is a phenomenon in the numerical simulation rather than the model, we decreased the time step substantially. This did not affect the computed $L^2$-norm. We conclude, that it is a consequence of a change in dominance between the two integral terms in Equation \eqref{eq:L2-norm differnce}. It is not yet clear what controls this change precisely. 
	
	For the remainder of this paper, we shall focus on a second observation that can be made from the preliminary simulations: varying $D$, these show that for smaller diffusion constant, there is larger `delay' between the solutions, see Figure \ref{fig_norm_D_01_all}. This causes a systematic deviation between the two solutions. Hence, it is important to see how to reduce this effect. Therefore, in all following numerical simulations, we select $D = 0.1$, since it gives the largest deviation in norms between the spatial exclusion model and the point source model. 
	
	\section{Extending Zero environmental Initial Value}
	\label{Sec_Results_Somo}
	\noindent
	
	We shall first examine the issue of systematic delay for the point source solution when there is a zero initial condition in the environment for the spatial exclusion model. The situation where the initial condition there is constant, but non-zero, turned out to require a modified approach. It is discussed in the next section.
	
	Since the solutions in the two approaches are defined on different spatial domains, the initial condition for the spatial exclusion model cannot be simply carried over to the larger domain of the point source model: choices for extension to the complement $\bar{\Omega}_C$ must be made. In the preliminary results of Section \ref{subsec:preliminary results} we have seen that simply extending by zero leads to structural delay in the solution, especially for small diffusion. 
	
	\subsection{A Gaussian-shaped extension}
	\noindent 
	We consider now a single cell, centered at $\boldsymbol{x}_c$. According to Proposition \ref{Prop_condition}, it is expected that the initial condition in $(BVP_P)$, i.e. $\bar{u}_0(\boldsymbol{x})$, cannot be simply set to zero on $\Omega_C$ as extension of the zero initial value $u_0(\boldsymbol{x})$ of $(BVP_S)$ on $\Omega\setminus\Omega_C$. As the spatial exclusion approach removes the cell region in the computational domain and starts diffusion from the cell boundary directly, while in the point source approach, firstly the compound needs to reach the cell boundary from the cell center, which takes extra time. To compensate this time difference and inspired by the fact that the difference between two approaches (see Figure \ref{Fig_norm_D_10_01}\subref{fig_norm_D_1_all}-\subref{fig_norm_D_01_diff}) stays more or less constant, we set $\bar{u}_0(\boldsymbol{x})$ on $\Omega_C$ in the form of the fundamental solution to the diffusion equation on $\R^d$, which is given by \citep{evans2010partial} 
	\begin{equation}
		\label{Eq_fndmtal_sol}
		P^D(\boldsymbol{x},t) = \left\{
		\begin{aligned}
			&\frac{1}{(4\pi Dt)^{d/2}}\exp\left\{-\frac{|\boldsymbol{x}|^2}{4Dt}\right\}, &\mbox{ $t>0, \boldsymbol{x}\in\R^d$,}\\
			&0, &\mbox{ $t<0, \boldsymbol{x}\in\R^d$,}
		\end{aligned}
		\right.
	\end{equation}
	where $d$ is the dimension. In this study, we consider $d=2$ only. 
	If we imagine that diffusion inside the cell has started a time $t_0>0$ before the start of the point source model from a unit Dirac mass at the centre $\boldsymbol{x}_c$, then at time $t$ it will have reached a distribution on $\R^2$ of Gaussian shape 
	\begin{equation}
		\label{Eq_fndmtal_sol_t0}
		P_{t+t_0}^D(\boldsymbol{x},\boldsymbol{x}_c) = \frac{1}{4\pi D(t+t_0)}\exp\left\{-\frac{|\boldsymbol{x} - \boldsymbol{x}_c|^2}{4D(t+t_0)}\right\},\qquad \mbox{ $t,t_0>0,\ \boldsymbol{x}\in\R^2$.}
	\end{equation} 
	We can modify $t_0$ and the intensity $p_0>0$ of the initial condition to arrive at a proposed extension of the initial condition $\bar{u}_0(\boldsymbol{x})$ in $(BVP_P)$ as a (discontinuous) truncated and scaled fundamental solution:
	\begin{equation}
		\label{Eq_u0_dirac}
		\bar{u}_0(\boldsymbol{x}) = \begin{cases} \ p_0P_{t_0}^D(\boldsymbol{x},\boldsymbol{x}_c), \quad &\boldsymbol{x}\in\bar{\Omega}_C,\\
			\ 0, &\boldsymbol{x}\in\Omega\backslash\bar{\Omega}_C.
		\end{cases}
	\end{equation}
	The idea is, to choose $(p_0,t_0)$ in such a way that the flux condition of Proposition \ref{Prop_condition} is met in the best possible way. 
	
	Denoting $r = |\boldsymbol{x} - \boldsymbol{x}_c|$, the distance to the singular point, the flux density at $\boldsymbol{x}$ in the direction pointing away from $\boldsymbol{x}_c$ that originates from the initial condition only is given by
	\[
	\phi_1(r,t) = - D\nabla(p_0P_{t+t_0}^D(r))\cdot\boldsymbol{n}
	= -Dp_0\frac{\partial P_{t+t_0}^D(r)}{\partial r}
	= \frac{p_0r}{2(t+t_0)}P_{t+t_0}^D(r).
	\]
	Hence, the flux density over $\partial\Omega_C$ from the initial condition in Equation (\ref{Eq_u0_dirac}) reads as
	\begin{equation}
		\label{Eq_flux_initial}
		\phi_1(R,t) \ =\ \frac{p_0R}{2(t+t_0)}P_{t+t_0}^D(R)\ 
		=\ \frac{p_0R}{8\pi D(t+t_0)^2} \exp\left\{ -\frac{R^2}{4D(t+t_0)}\right\}.
	\end{equation}
	With the production of compounds at the center of the cell $\boldsymbol{x}_c$ and production rate $\Phi(\boldsymbol{x}_c)$, the fundamental solution of $(BVP_P)$ is given by
	\begin{align*}
		u_P(\boldsymbol{x},t) &= \int_{0}^{t}\int_{\R^2}P^D_{t-s}(\boldsymbol{x}, \boldsymbol{y})\delta(\boldsymbol{y}-\boldsymbol{x}_c)\Phi(\boldsymbol{x}_c)d\boldsymbol{y}ds\\
		& = \int_{0}^t\Phi(\boldsymbol{x}_c)P^D_{t-s}(\boldsymbol{x}, \boldsymbol{x}_c)ds
		\ =\ \int_{0}^t \Phi(\boldsymbol{x}_c)P^D_{t-s}(r)ds.
	\end{align*} 
	Subsequently, the flux density caused by the point source at $\boldsymbol{x}_c$ only is computed as 
	\begin{align*}
		\phi_2(r, t) &= -D\nabla u(r,t)\cdot\boldsymbol{n}\ 
		=\ -D\frac{\partial u(r,t)}{\partial r}\\
		& = D\frac{\partial}{\partial r}\int_{0}^t  \Phi(\boldsymbol{x}_c)P^D_{t-s}(r)ds
		\ =\  \frac{\Phi(\boldsymbol{x}_c)}{2\pi r}\exp\left\{-\frac{r^2}{4Dt}\right\}.
	\end{align*}
	Then, we obtain the flux over $\partial\Omega_C$ as 
	\begin{equation}
		\label{Eq_flux_prod}
		\phi_2(R,t) = \frac{\Phi(\boldsymbol{x}_c)}{2\pi R}\exp\left\{-\frac{R^2}{4Dt}\right\}.
	\end{equation}
	According to Proposition \ref{Prop_condition}, and given $-D\nabla u_P\cdot\boldsymbol{n} = \phi_1(R,t) +\phi_2(R,t)$ over $\partial\Omega_C$, the relation between the scale $p_0$ and prediffused time $t_0$ can be determined from the approximate equation
	\begin{align}
		\phi(\boldsymbol{x},t) \ &=\ -D\nabla u_P\cdot\boldsymbol{n}\ \approx\ \phi_{sum} :=\ \phi_1(R,t) +\phi_2(R,t) \nonumber \\
		&\ =\ \frac{p_0R}{8\pi D(t+t_0)^2} \exp\left\{ -\frac{R^2}{4D(t+t_0)}\right\}\ + \ \frac{\Phi(\boldsymbol{x}_c)}{2\pi R}\exp\left\{-\frac{R^2}{4Dt}\right\}.\label{eq:expresion phi_sum}
	\end{align}
	Let $t=0$, then 
	\begin{equation}
		\label{Eq_p0_t0}
		p_0(t_0) \approx \frac{2t_0\phi(\boldsymbol{x},t)}{RP^D_{t_0}(R)}.
	\end{equation}

	\subsection{A Comparison: Gaussian-shaped Initial Value inside the Cell}
	\noindent 
	We set $D=0.1$, such that the time delay between the solutions in the two approaches is the largest in the range of $D$ that we consider, see Section \ref{subsec:preliminary results}. We take as initial condition $\bar{u}_0$ on $\Omega$ for the point source model the function defined in Equation \eqref{Eq_u0_dirac}. Strictly speaking, this function is not in $H^1(\Omega)$. However, numerically the function value of $\bar{u}_0$ at the mesh points in the interior of the cell, i.e. in $\Omega_C$ (and 0 for those on the boundary $\partial\Omega_C$, see Section \ref{Sec_Maths_Models}) are used to obtain a numerical approximation for $\bar{u}_0$ through FEM that is in $H^1$. This is the extension $\bar{u}_0$ that is actually considered, but which is difficult to prescribe explicitly.
	
	
	Thus, one has two degrees of freedom in selecting the Gaussian-shaped initial condition. The question is, how to choose the value of $(p_0, t_0)$. 
	As first option we considered determining $(p_0,t_0)$ by minimizing the total deviation between $\phi_{sum}(R, t)$ and $\phi(\boldsymbol{x},t) = 1$ over the time interval $[0,T]$ in $L^1$-sense. That is, $(p_0,t_0)$ is taken as
	\begin{equation}
		\label{Eq_p0_t0_Case_2}
		(\hat{p}_0, \hat{t}_0) \in  \argmin_{(p_0, t_0)} \int_0^T |\\Phi_{sum}(t) - \phi(\boldsymbol{x}, t)| dt,
	\end{equation}
	This choice is referred to as `\textit{Option 1}'.
	
	Figure \ref{Fig_zero_Gauss} shows the global norms of the solutions and norms of differences between the spatial exclusion approach and the point source approach with two types of initial condition: zero initial condition compared to a Gaussian-shaped initial condition on the cell's interior. The shape parameter $(p_0,t_0)$ has been determined according to \textit{Option 1}, i.e. Equation \eqref{Eq_p0_t0_Case_2}. In Figure \ref{Fig_zero_Gauss}(a) one sees that the norms of the individual solutions of spatial exclusion and point source model will converge to each other after a transient, when the Gaussian-shaped extension of the initial condition is used. It indicates that the use of Gaussian-shaped initial condition does compensate the time delay, compared to the use of zero initial condition. The norms of the difference of the two solutions (with Gaussian-shaped extension) converge quickly to a steady -- though non-zero -- level. So, error is controlled well after an initial transient. Moreover, the error is less than that in the case of the zero-extension, as can be seen in Figure \ref{Fig_zero_Gauss}(b). Thus, the Gaussian-shaped extended initial condition yields an improvement of the approximation over the homogeneously-zero extended initial condition.
	
	When the Gaussian-shaped initial condition is used, due to the discontinuity of $\bar{u}_0(\boldsymbol{x})$ given by Equation (\ref{Eq_u0_dirac}) in the point source approach,  the $H^1$-approximation of $\bar{u}_0(\boldsymbol{x})$ that is taken as initial condition in the numerical analysis has a large gradient locally in a small region of the boundary of the cell. This produces a spike of $\|u_S-u_P\|_{H^1(\Omega\backslash\Omega_C)}$ and $\|\nabla u_S-u_P\|_{L^2(\Omega\backslash\Omega_C)}$ that appear in the beginning of the simulation. However, the differences quickly dropped to a lower level, compared to when the homogeneously-zero initial condition is used. 
	\begin{figure}
		\centering
		\subfigure[Global norm differences of solutions to the two approaches]{
			\includegraphics[width = 0.75\textwidth]{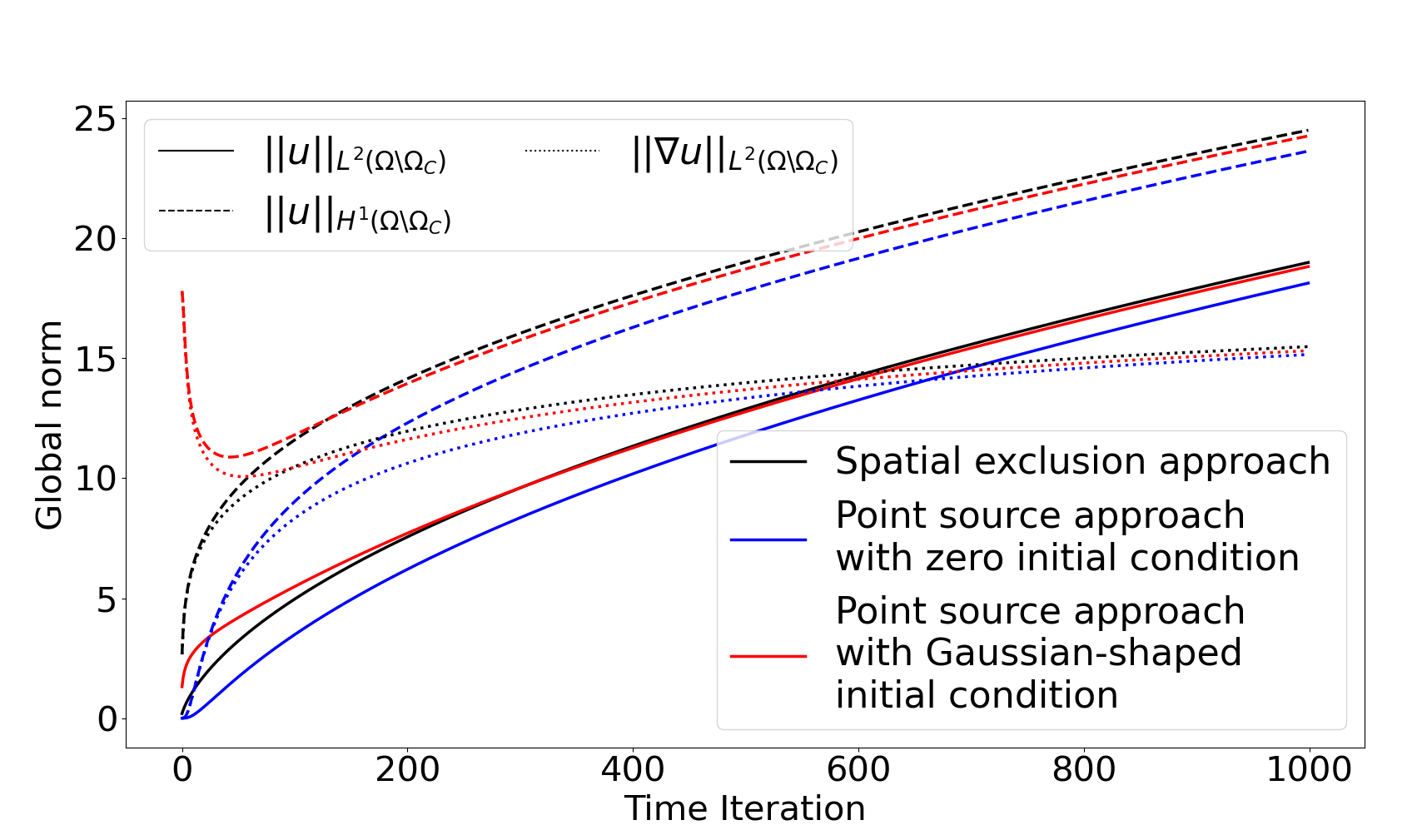}
		}
		\subfigure[Local norm differences of solutions to the two approaches]{
			\includegraphics[width = 0.48 \textwidth]{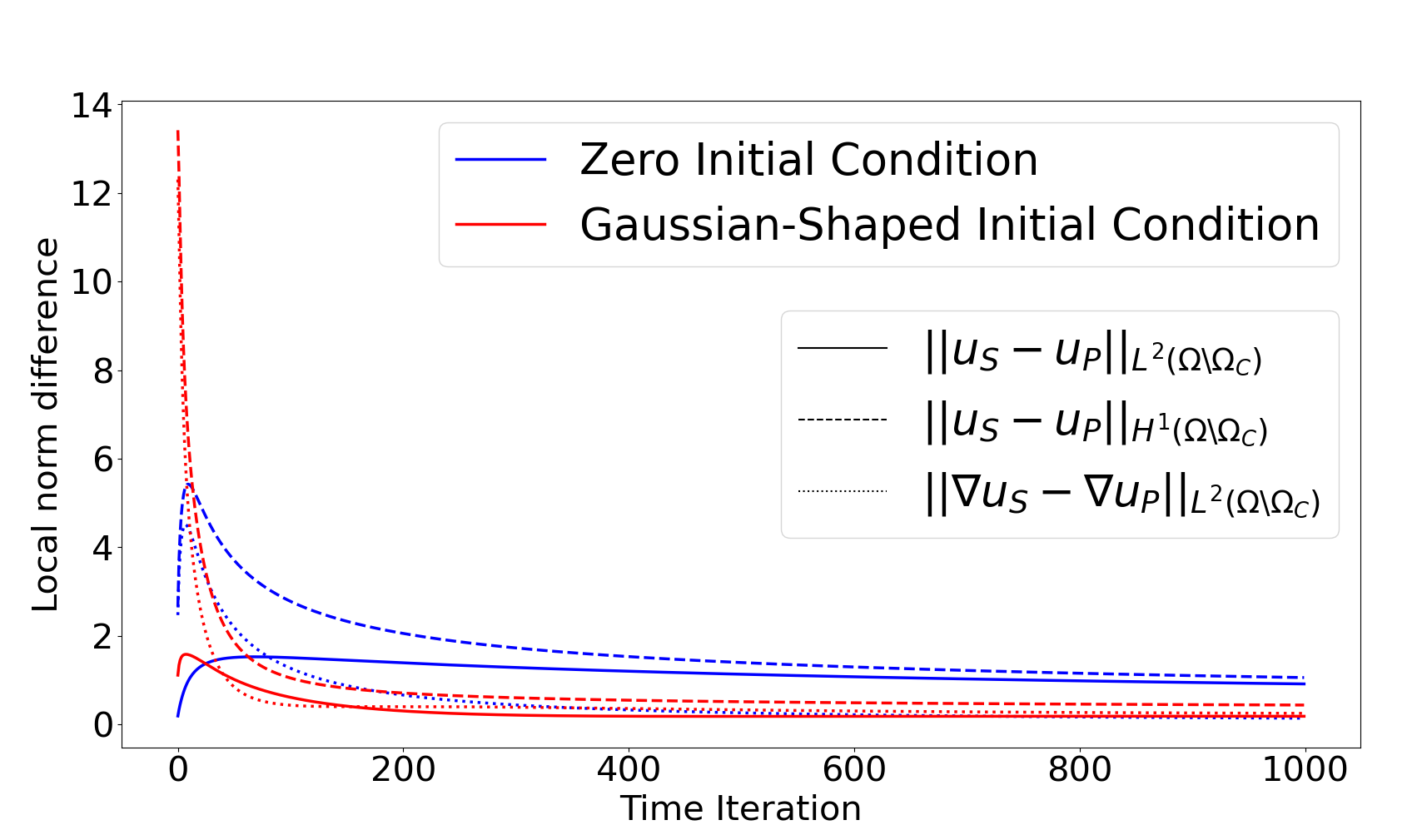}
			\label{fig_norms}}
		\subfigure[$c^*(t)$]{
			\includegraphics[width=0.48\textwidth]{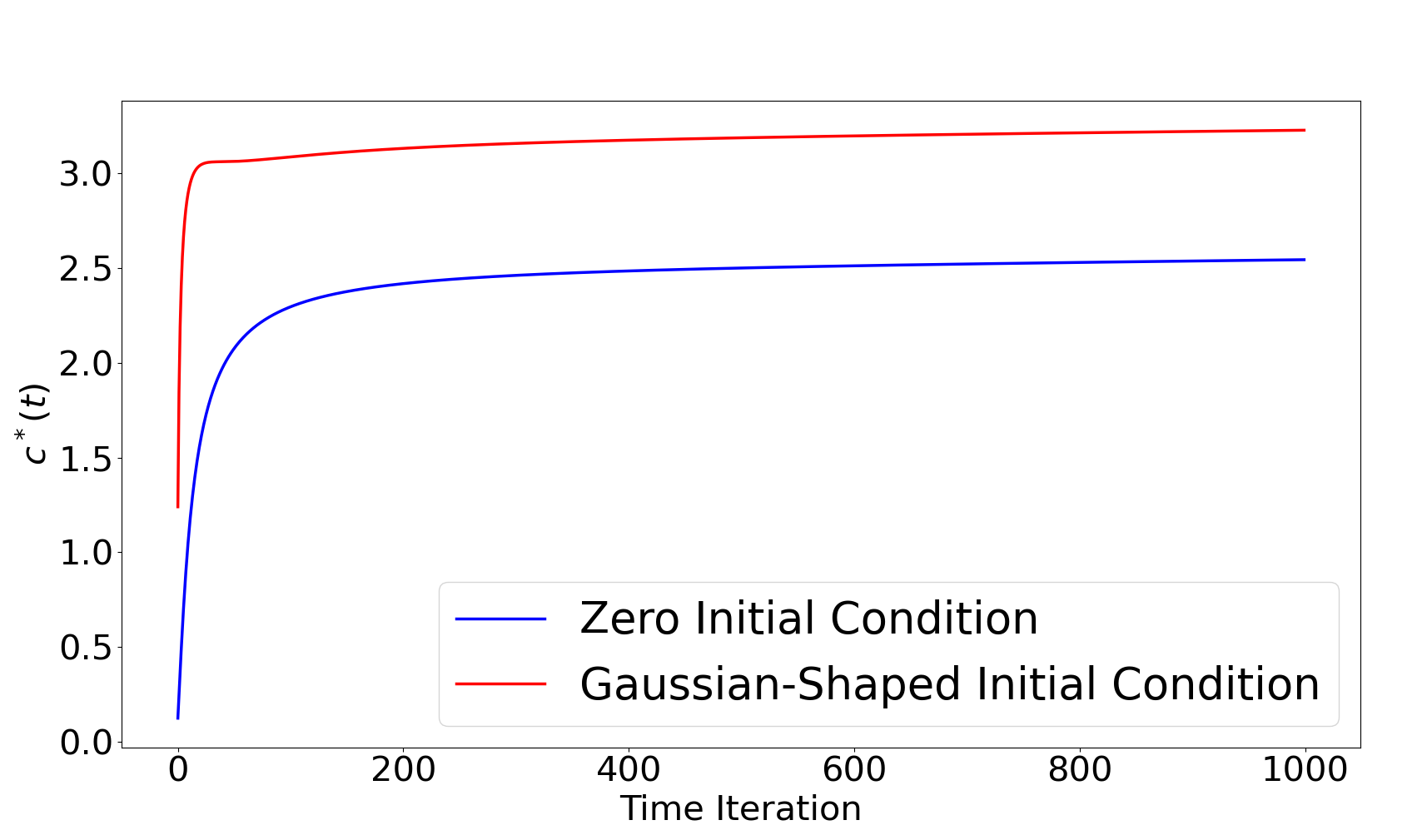}
			\label{fig_c_star}}
		\caption{For $D=0.1$, computed (a) global norms for solutions and (b) local norm differences (namely, the $L^2-$ and $H^1-$norm of the solution $u$, and the $L^2-$norm of the gradient of the solution $u$) between the two approaches, as well as (c) the quantity $c^*(t)$. Here, the amplitude and the variance of the Gaussian-shaped initial condition are determined by minimizing the total flux deviation on $\partial\Omega_C$ over $[0, T]$, i.e. $(p_0,t_0)$ have been taken as in $\textit{Option 1}$ in Table \ref{Tbl_opt_options}. Black curves represent results from the spatial exclusion approach, and blue and red curves represent the zero and Gaussian-shaped initial conditions, respectively. Solid, dashed and dotted lines represent $\|u_S-u_P\|_{L^2(\Omega\backslash\Omega_C)}$, $\|u_S-u_P\|_{H^1(\Omega\backslash\Omega_C)}$ and $\|\nabla u_S-\nabla u_P\|_{L^2(\Omega\backslash\Omega_C)}$, respectively. Note that the introduction of the Gaussian-shaped initial condition in the point source approach removed the time delay, essentially.}
		\label{Fig_zero_Gauss}
	\end{figure}
	
	Figure \ref{Fig_zero_Gauss}\subref{fig_c_star} shows the time-integrated deviation between the prescribed flux $\phi(\boldsymbol{x},t)=1$ in $(BVP_S)$ and the flux generated from $(BVP_P)$ on the cell boundary $\partial\Omega_C$ (i.e. Equation (\ref{Eq_c_star})). Due to the discontinuity in the initial condition for $(BVP_P)$, the gradient of the flux in $(BVP_P)$ over $\partial\Omega_C$ is large, hence, the $c^*(t)$ is larger when the Gaussian-shaped initial condition is used than for the zero initial condition. As shown in Figure \ref{Fig_zero_Gauss}\subref{fig_c_star}, this measure of quality reaches a rather small steady rate of increase quicker for the Gaussian-shaped condition. However, upon quick inspection, \ref{Fig_zero_Gauss}\subref{fig_c_star} seems to indicate that the zero initial condition performs better than the Gaussian-shaped extension, which is not the case in view of the norms of local differences presented in Figure \ref{Fig_zero_Gauss}(b). 
	\vskip 0.2cm
	
	We conclude that the Gaussian-shaped extension of the initial condition improves the quality of approximation compared to the homogeneously-zero initial condition. However, {\it the graph of $c^*(t)$ turns out to be difficult to interpret towards drawing conclusions on the quality of approximation in terms of $L^2$- and $H^1$-norms.}

	\subsection{Multiple Cells in the Computational Domain}\label{Sec_number_of_holes}
	\noindent
	
	The chief benefits of the point source model over the spatial exclusion model lie in analytical tractability and computational efficiency when there are many cells, in particular when these are also moving. Therefore, we shall now examine effects that may occur due to the presence of multiple cells. That there will be some effect of multiple cells can be anticipated. Intuitively speaking, part of the flux of mass released by one cell in the point source model will at some point in time freely transverse the part of the domain that is the interior of another cell in the spatial exclusion model. In the latter, diffusing particles would have reflected on the boundary of this cell. So, different trajectories of the diffusion process are expected, which may result in differences in the solutions. The impact of this phenomenon will depend on the distance between the cells, their size and the diffusivity.
	\vskip 0.2cm
	
	In this study, we shall only exhibit the impact of this phenomenon. Investigation of the question how to compensate this impact is deferred to another study. For simplicity, we consider two and ten cells respectively with different locations. All the cells are assumed to be identical in shape, size and (constant) flux density over the boundary. The locations of the cells over the domain are shown in Figure \ref{Fig_mesh_number_of_cells} in Appendix \ref{app_sec:location cells}. Similarly to the previous section, we plot against time various norm differences of the solutions, and $c^*(t)$ of the same single cell, i.e. the cell that is present at the same location in all configurations. In every subfigure in Figure \ref{Fig_multi_holes}, the left panel shows the result when the homogeneous initial condition is applied, i.e. $u_0(\boldsymbol{x},0) = 0$ for the computational domain, and the right panel is when the Gaussian-shaped extension is used as the initial condition inside the cell $\Omega_C$.

	\begin{figure}[h!]
		\centering
		\subfigure[$\|u_S(\boldsymbol{x},t) - u_P(\boldsymbol{x},t)\|_{L^2(\Omega\backslash\Omega_C)}$]{
			\includegraphics[width = 0.45\textwidth]{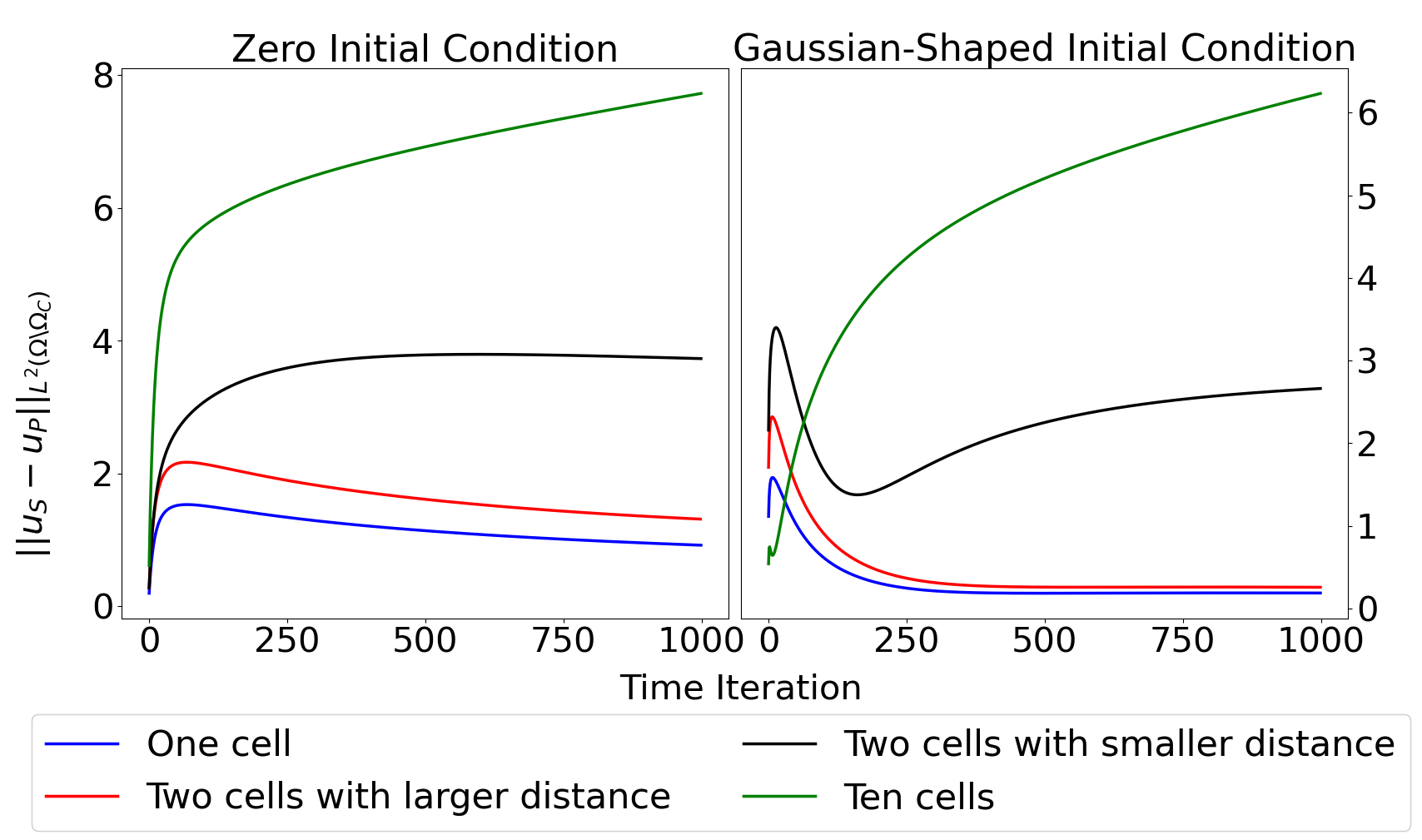}}
		\subfigure[$\|\nabla u_S(\boldsymbol{x},t) - \nabla u_P(\boldsymbol{x},t)\|_{L^2(\Omega\backslash\Omega_C)}$]{
			\includegraphics[width = 0.45\textwidth]{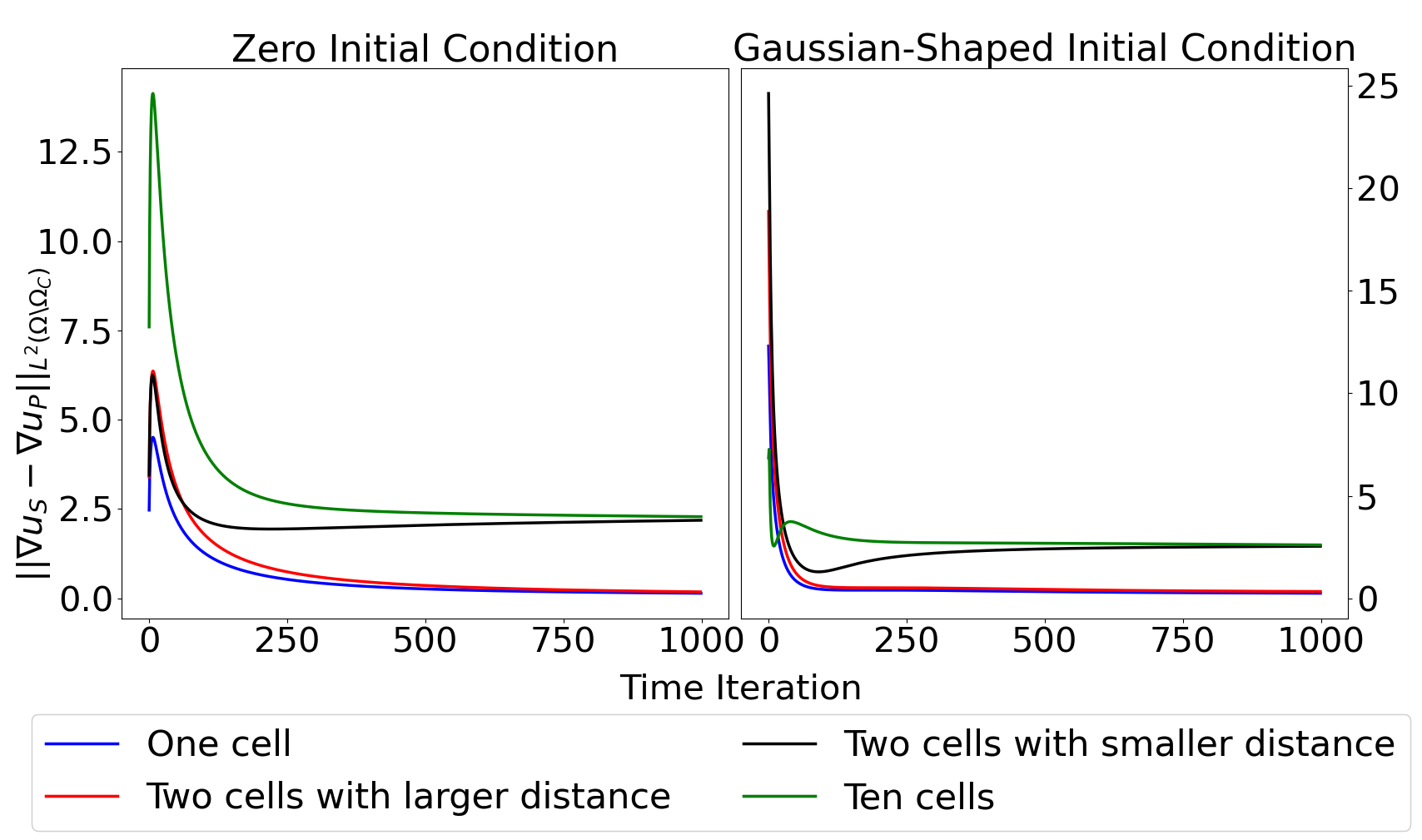}}
		\subfigure[$\|u_S(\boldsymbol{x},t) - u_P(\boldsymbol{x},t)\|_{H^1(\Omega\backslash\Omega_C)}$]{
			\includegraphics[width = 0.45\textwidth]{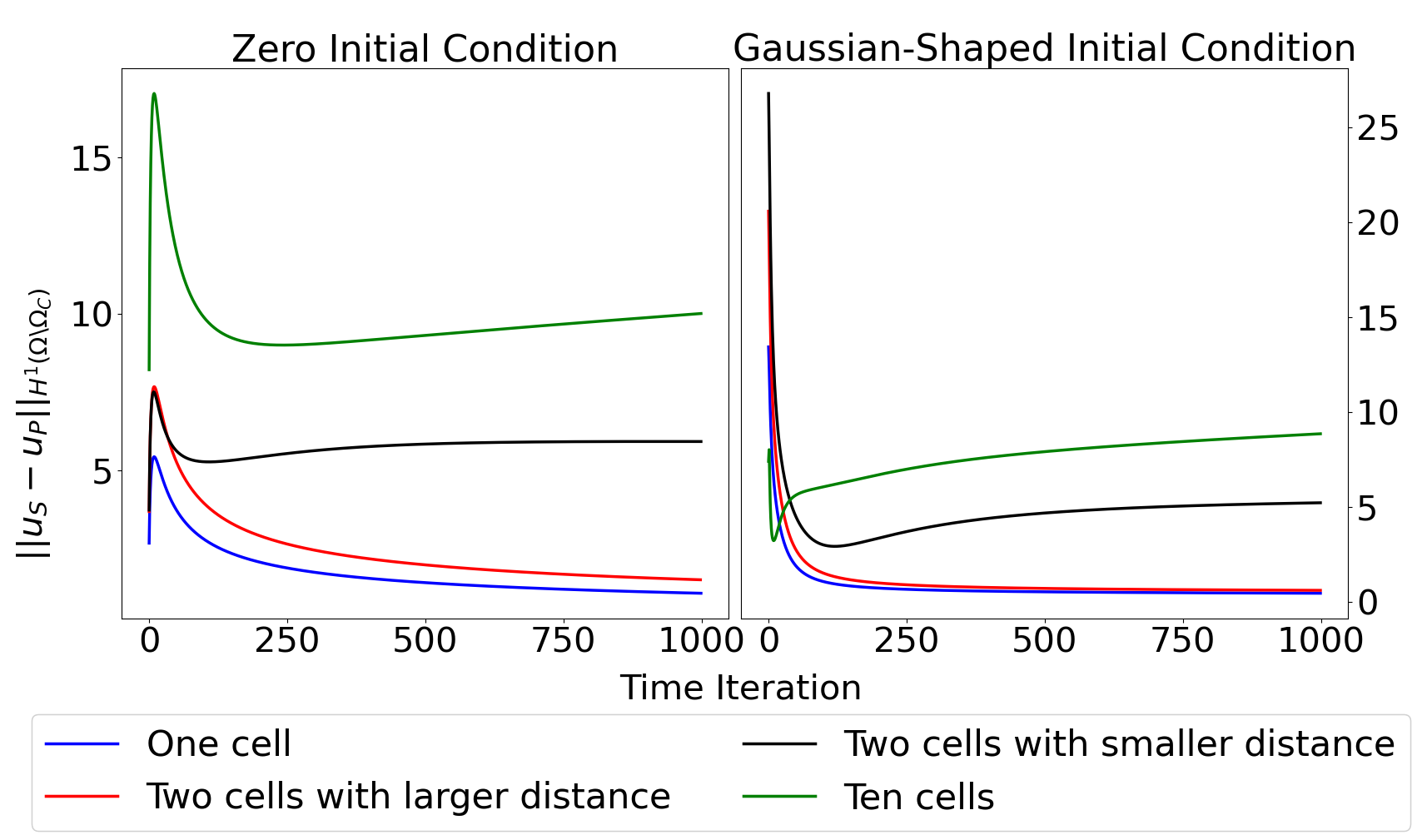}}
		\subfigure[$c^*(t)$ of the same cell]{
			\includegraphics[width = 0.45\textwidth]{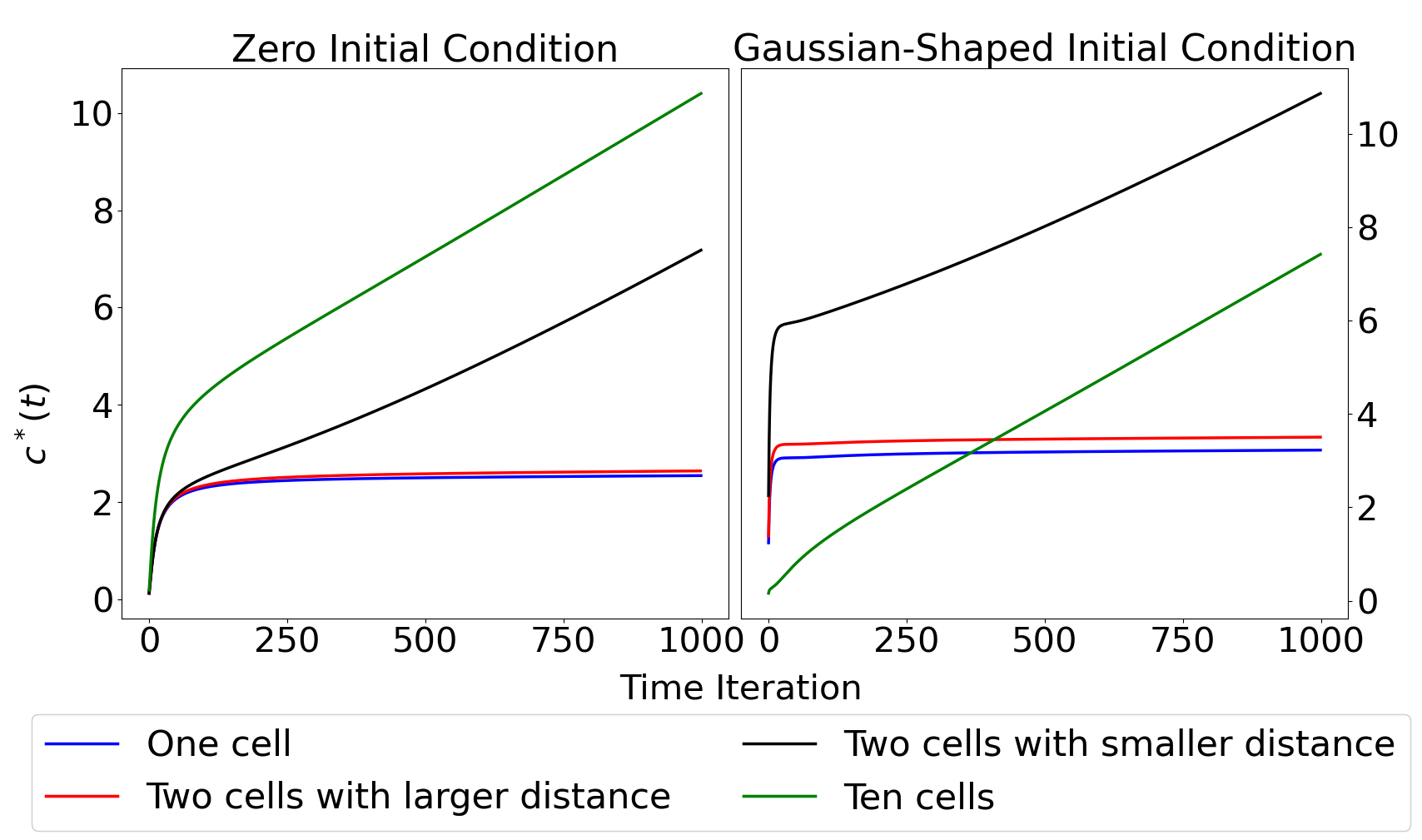}}
		\caption{Numerical results of multiple cells in the computational domain. Cells are identical ($\phi_i=1$, $R_i=\frac{1}{2}$). $D=01.$. Results have been plotted for homogeneously-zero initial condition (solid curve in the left) and Gaussian-shaped initial condition (dashed curve in the left) in every subfigure. Different colors of curves represent different number of cells, as indicated. The used cell configurations for the computations are shown in Figure \ref{Fig_mesh_number_of_cells}.}
		\label{Fig_multi_holes}
	\end{figure}
	
	Generally speaking, we observe the existence of the impact from other cells, since for every error quantifier, the more cells in the computational domain, the larger the quantity. Furthermore, the distance between the cells plays a significant role in this impact: for the two-cell case, when the cells are closer to each other, the influence appears sooner, and $c^*(t)$ is increasing continuously and seems to not be able to reach a steady state, which is opposite to the two-cell case with a larger distance. As the number of cells increases, the quantities are increasing as well except for the $c^*(t)$, which results from the minimal distance between the cells being smaller than the two-cell case with a smaller distance. In other words, it verifies the assumption that the distance between cells is a significant factor to study the consistency of the solutions to the two approaches when there are multiple cells in the computational domain. Also in this setting, a Gaussian-shaped extension of the initial conditions yields a better approximation, although the difference tends to become smaller when the number of cells increases.
	
	\subsection{Optimal Selection of $(p_0, t_0)$ in the Initial Condition}
	\noindent
	So far, we compute the amplitude $p_0$ and variance parameter $t_0$ by minimizing expression (\ref{Eq_p0_t0_Case_2}). However, there are other options to select $(p_0, t_0)$ by minimizing different objective functions. Key objective is to minimize the deviation between $\phi(\boldsymbol{x},t)$ and $\phi_{sum}(t)$ as expressed in Equation \eqref{eq:expresion phi_sum}. The $L^1$-norm deviation over time, as in Equation \eqref{Eq_p0_t0_Case_2} may be replaced by e.g. supremum norm. In Lemma \ref{lem:shape phi-sum} we summarized characteristic properties of $\phi_{sum}(t)$. It may have at most two extreme values, one above the (constant) level $\phi$, one below. In order to make both extreme deviations from the target value $\phi$ as small as possible, one can take as objective to minimize
	\begin{equation}\label{eq:max-min constraint}
		\bigl|\max_{0\leqslant t\leqslant T}(\phi_{sum}(t)-\phi)\bigr|\ +\ \bigl|\min_{0\leqslant t\leqslant T}(\phi_{sum}(t)-\phi)\bigr|,
	\end{equation}
	over $(p_0,t_0)$. One may think too of starting the approximation $\phi_{sum}$ at level $\phi$ at $t=0$. This yields Equation \eqref{Eq_p0_t0} as constraint on the $(p_0, t_0)$ value pairs. There is no {\it a priori} guarantee however, that this yields the best result for the corresponding unconstrained minimisation problem with the same objective function. A combination of objective functions defined in Equations \eqref{Eq_p0_t0_Case_2} and \eqref{eq:max-min constraint} has also been considered.

	Table \ref{Tbl_opt_options} gives an overview of the objective functions that have been examined, each with and without the value-pair constraint (Equation \eqref{Eq_p0_t0}). The $(p_0,t_0)$-values thus obtained for each option, by applying the \texttt{optimize.minimize} function of Python package \texttt{Scipy} (version 1.8.0) are shown in the last two columns. 
	
	\begin{table}\footnotesize
		\centering
		\caption{Considered options to compute $(p_0, t_0)$ as minimisation of the indicated different objective functions and constraint on the $(p_0,t_0)$-value pair. Initial condition is $u_0 = 0$ in $\Omega\backslash\Omega_C$. The optimization was performed by the function \texttt{optimize.minimize} of \texttt{Scipy} package (version $1.8.0$) in Python.}
		\begin{tabular}{p{1.7cm}<{\centering}p{6cm}<{\centering}p{3cm}<{\centering}p{2cm}<{\centering}p{2cm}<{\centering}}
			\toprule
			{\bf Options} & {\bf Objective function} & {\bf Constraints} & {\bf Value of $p_0$} & {\bf Value of $t_0$} \\
			\midrule
			{\bf Option 1} & \multirow{2}{*}{$ \int_0^T |\phi_{sum}(t) - \phi| dt$} & -  & $55.379$ & $3.660$ \\
			{\bf Option 2} &  & $\displaystyle p_0(t_0) = \frac{2t_0\phi}{RP^D_{t_0}(R)}$ & $37.114$ & $2.383$ \\
			\midrule
			{\bf Option 3} & 
			$ |\max_{0\leqslant t\leqslant T}(\phi_{sum}(t)-\phi)|$ & - & $27.946$ & $1.905$\\
			{\bf Option 4} & $+ |\min_{0\leqslant t\leqslant T}(\phi_{sum}(t)-\phi)|$ & $\displaystyle p_0(t_0) = \frac{2t_0\phi}{RP^D_{t_0}(R)}$ & $21.737$ & $1.737$ \\
			\midrule
			{\bf Option 5} & $ \int_0^T |\phi_{sum}(t) - \phi| dt + |\max_{0\leqslant t\leqslant T}(\phi_{sum}(t)-\phi|$ & - & $31.451$ & $2.086$ \\
			{\bf Option 6} & $+ |\min_{0\leqslant t\leqslant T}(\phi_{sum}(t)-\phi)|$ & $\displaystyle p_0(t_0) = \frac{2t_0\phi}{RP^D_{t_0}(R)}$ & $34.439$ & $2.283$\\
			\bottomrule
		\end{tabular}
		\label{Tbl_opt_options}
	\end{table}
	
	Figure \ref{Fig_p0_t0_all_cases} shows the plot of the important quantifiers of the consistency between the approaches. As a reference, we add the curve which is obtained by defining zero initial conditions over $\Omega$ in $(BVP_P)$. Extending the initial condition on $\Omega_C$ with the Gaussian kernel provides sufficient flux to reduce the local difference between the solutions in these two approaches, which can be seen in Figure \ref{Fig_p0_t0_all_cases}(a). Nevertheless, due to discontinuity of the initial condition on $\partial\Omega_C$ as defined in Equation (\ref{Eq_u0_dirac}), similarly to Figure \ref{Fig_zero_Gauss}(a), using the inhomogeneous initial condition causes that the $H^1-$norm (see Figure \ref{Fig_p0_t0_all_cases}(d)) and the $L^2-$norm of the gradient of the solution (see Figure \ref{Fig_p0_t0_all_cases}(c)) difference are significantly larger than using the homogeneous initial condition in the first few time steps. However, when these have stabilized after this transient time period, the deviations for the all Gaussian-shaped extensions of the initial condition are almost one order of magnitude lower than those for the homogeneously-zero initial condition. 
	
	Again, the quantifier $c^*(t)$, defined in Equation \eqref{Eq_c_star}, does not allow for a conclusion as clear as that provided by the $L^2$ and $H^1$-norm differences. It also contains the gradient of $u_P$. Hence, most graphs of $c^*(t)$ in Figure \ref{Fig_p0_t0_all_cases}(d) are above the graph for the homogeneously-zero initial condition (dashed curve), except for Option 3 and Option 4. There, the maximal and minimal difference between the presumed flux density in $(BVP_S)$ and the analytical flux $D\nabla u_P\cdot\boldsymbol{n}$ from $(BVP_P)$ are minimized. In other words, Option 3 and 4 select $(p_0, t_0)$ such that mostly $D\nabla u_P\cdot\boldsymbol{n}$ is close to $\phi(\boldsymbol{x}, t)$. In particular, Option 4 appears to be the best option, since $c^*(t)$ reaches the steady state fastest and with the smallest value. In the latter option one enforces that $\phi(\boldsymbol{x},0) = \phi_{sum}(0)$. However, Figure \ref{Fig_p0_t0_all_cases}(a) indicates that Option 4 is the worst choice among the Gaussian-shaped extensions, from the point of view of the $L^2$-norm distance. There is no clear distinction among the options from the point of view of the $H^1$-norm.
	
	\begin{figure}
		\centering
		\subfigure[$\|u_S(\boldsymbol{x},t) - u_P(\boldsymbol{x},t)\|_{L^2(\Omega\backslash\Omega_C)}$]{
			\includegraphics[width = 0.48\textwidth]{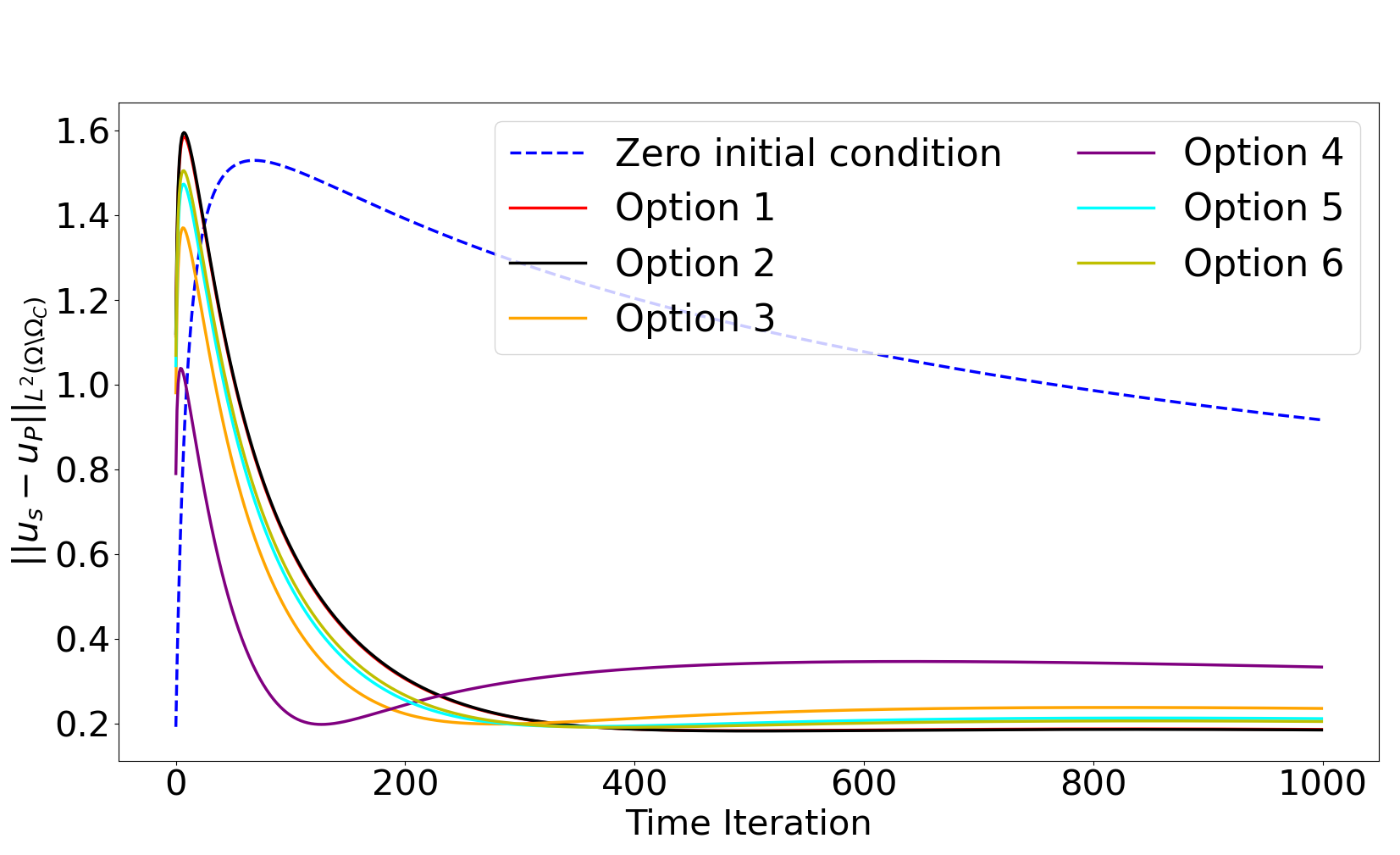}}
		\subfigure[$\|\nabla u_S(\boldsymbol{x},t) - \nabla u_P(\boldsymbol{x},t)\|_{L^2(\Omega\backslash\Omega_C)}$]{
			\includegraphics[width = 0.48\textwidth]{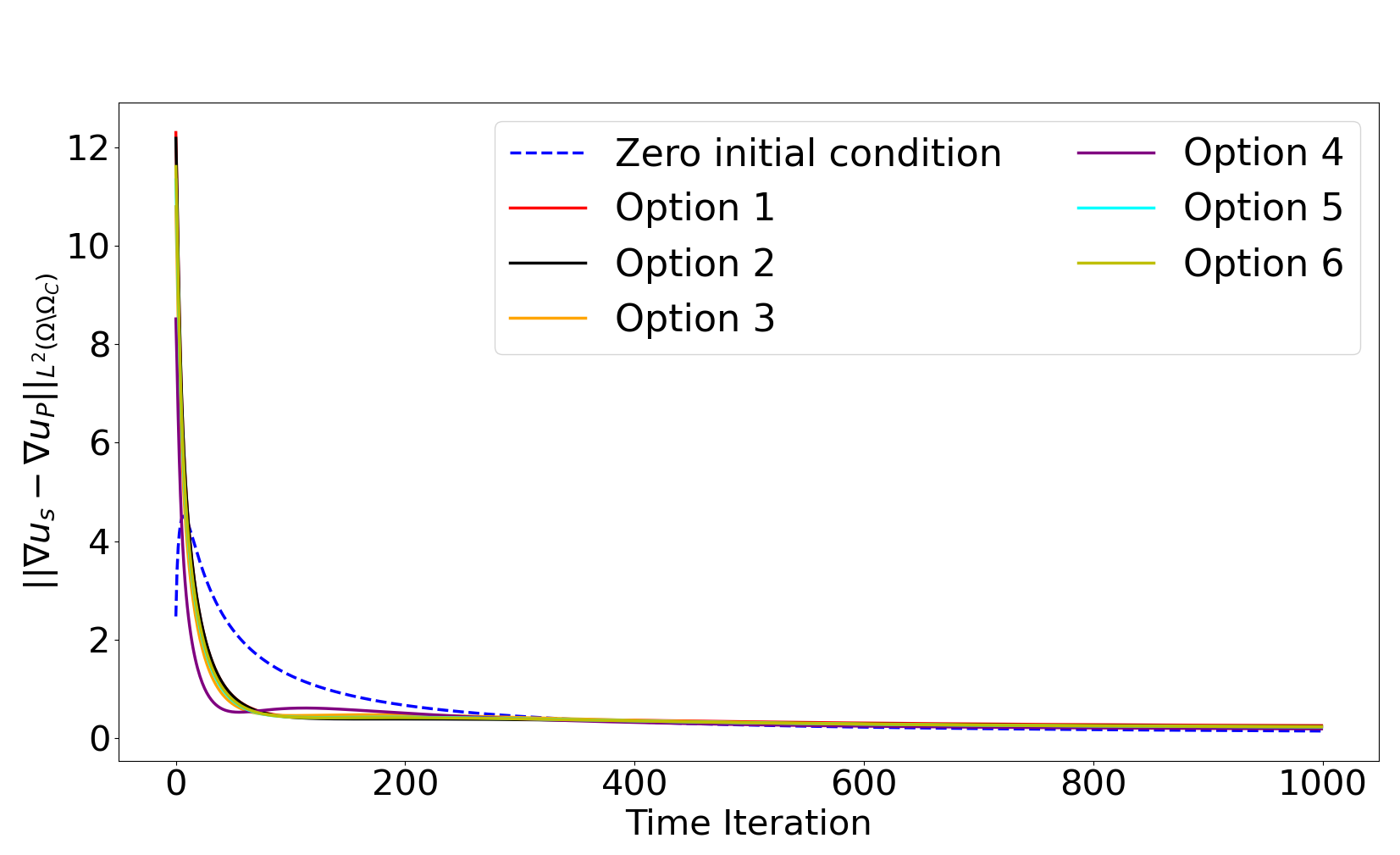}}
		\subfigure[$\|u_S(\boldsymbol{x},t) - u_P(\boldsymbol{x},t)\|_{H^1(\Omega\backslash\Omega_C)}$]{
			\includegraphics[width = 0.48\textwidth]{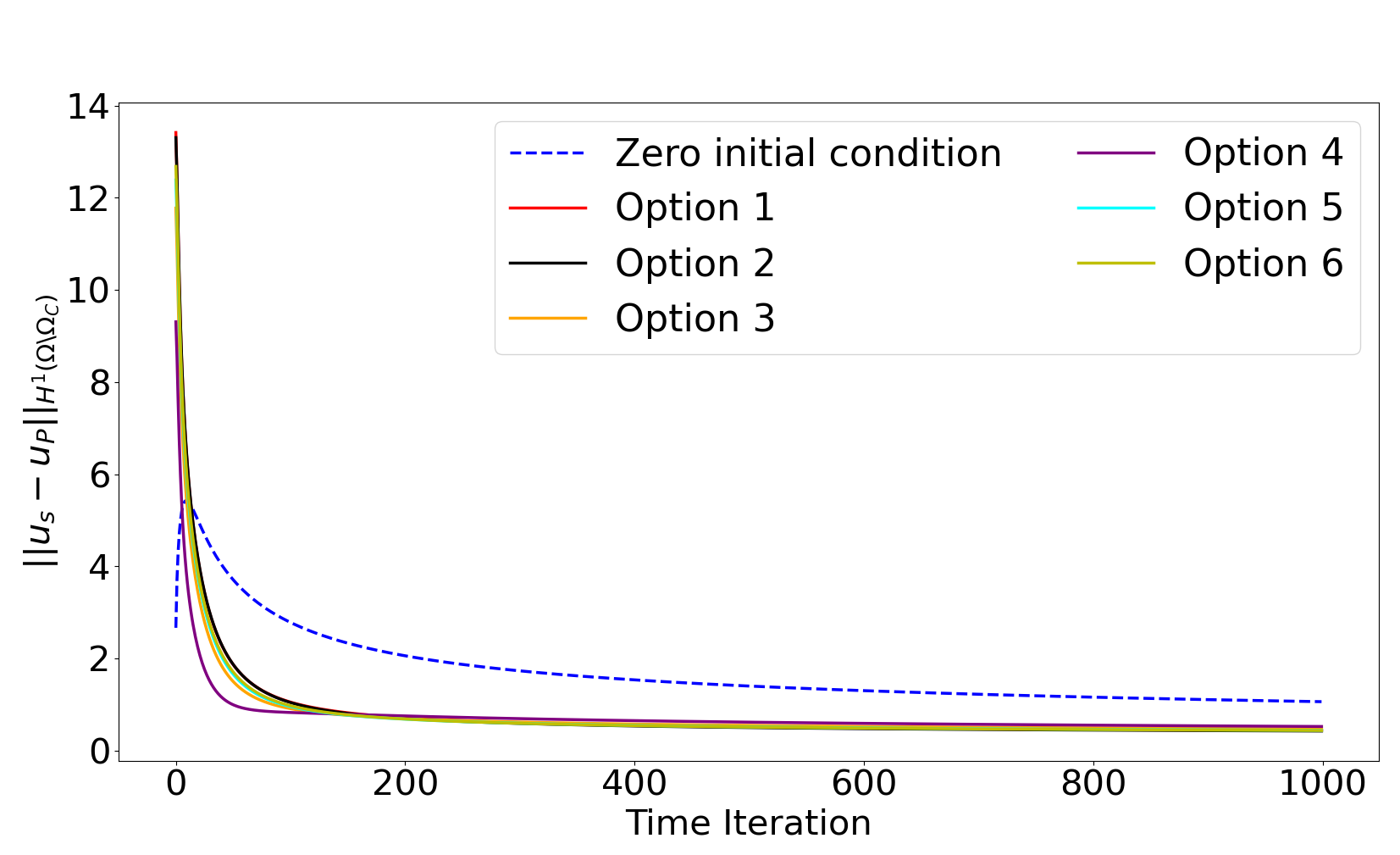}}
		\subfigure[$c^*(t)$]{
			\includegraphics[width = 0.48\textwidth]{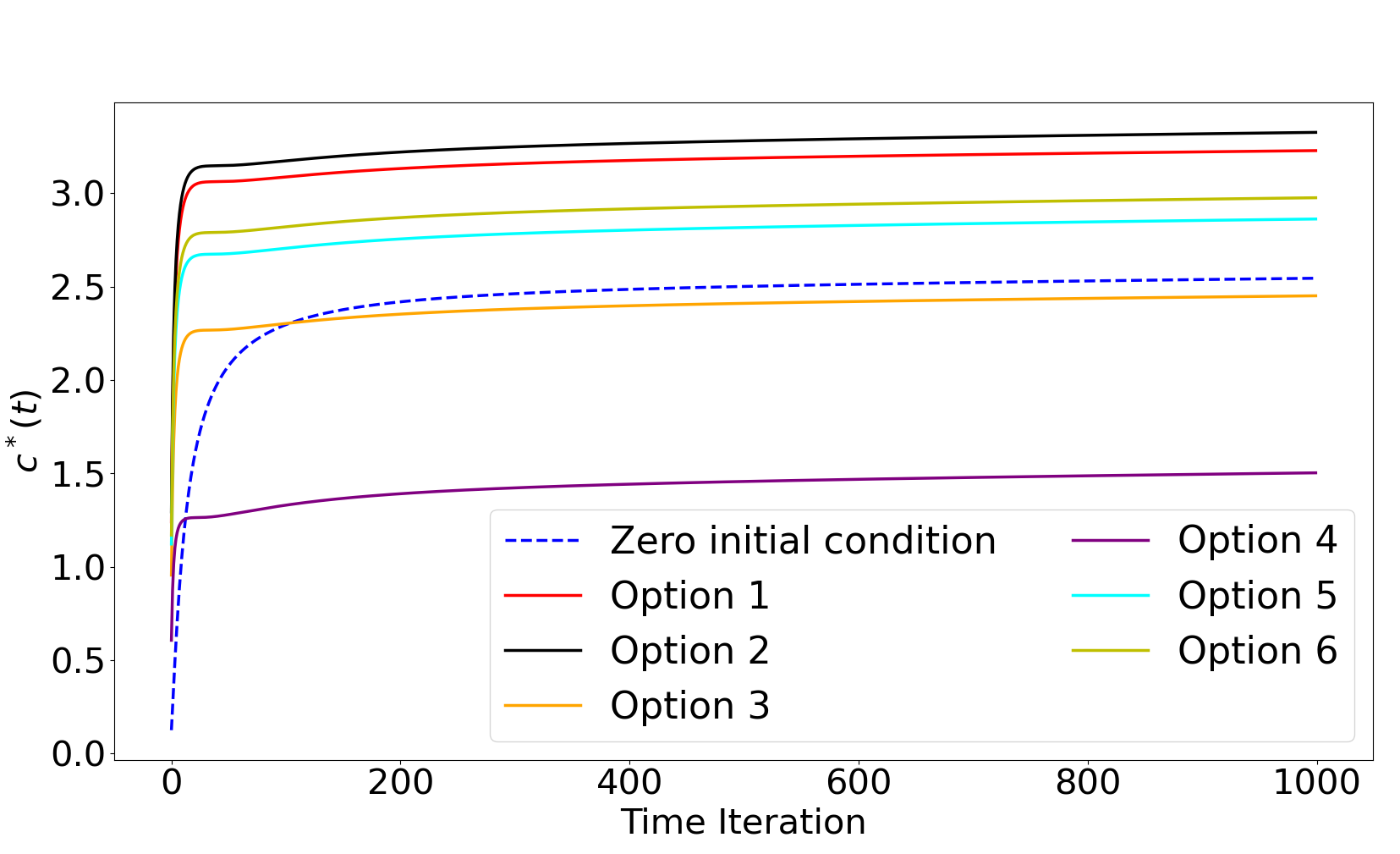}}
		\caption{Measures of quality of approximation between solutions as a function of time. The Gaussian-shape parameters $(p_0,t_0)$ have been computed according to the options listed in Table \ref{Tbl_opt_options} and the resulting values for $(p_0,t_0)$ have been used in the simulation ($D=0.1, \phi=1, R=\frac{1}{2}$). Blue dashed line represents the $(BVP_P)$ when $u_P(\boldsymbol{x},0) = 0$ for $\Omega$, and other coloured solid lines represent the $(BVP_P)$ when the initial condition is given by Equation (\ref{Eq_u0_dirac}). The curve for Option overlaps visually with that of Option 2 in Panel (a), (b) and (c)}.
		\label{Fig_p0_t0_all_cases}
	\end{figure}
	
	In view of Proposition \ref{Prop_condition}, Equation \eqref{eq:L2-norm differnce}, the behaviour of the $L^2$ norm is a delicate interplay between the flux difference over the boundary (measured by $c^*(t)$ and the difference of the gradient of the solutions. These two quantities are not independent. More analytic insight -- if obtainable -- is required to understand properly the apparent discrepancy between the $c^*$ measure and $L^2$-norm of difference of solutions.
	
	In general, the smaller the quantifier is, the better the option. However, again there seems to be no ``\textit{best}" option for all quantifiers considered. Again, it turns out that $c^*(t)$ has to be interpreted with greatest care.

	\section{Extending Nonzero environmental Initial Value}
	\label{Sec_Results_Inhomo}
	\noindent
	As observed in the previous results, discontinuity over $\partial\Omega_C$ caused by the Gaussian-shaped initial condition in the interior of the cell and the zero homogeneous initial condition in the rest of the computational domain, results in a significant difference between the two approaches in the quantifiers that involve the gradient of $u_P(\boldsymbol{x},t)$. This suggests to introduce an additional condition of continuity on the cell boundary. For zero initial condition this can never be achieved by the strictly positive Gaussian kernel. Therefore, we propose to alter the zero initial condition to be a positive constant, denoted by $C$ and to investigate the selection of $(p_0, t_0)$ further for such non-zero initial conditions. 
	
	In this section, we only consider the $L^1$-objective function from Option $1$ and $2$ given by Equation \eqref{Eq_p0_t0_Case_2}.
	We take the continuity constraint for the Gaussian-shaped extension on the boundary $\partial\Omega_C$, which amounts to taking 
	\begin{equation}
		\label{Eq_new_constraint_C}
		\displaystyle\frac{p_0}{4\pi Dt_0}\exp\{-\frac{R^2}{4Dt_0}\}=C
	\end{equation}
	as the new constraint for $(p_0, t_0)$ instead of Equation \eqref{Eq_p0_t0}. By doing this, we ensure that there exists no jump at the boundary of the cell, i.e. $\partial\Omega_C$. Of course, there may be a jump in flux. Note that for the options listed in Table \ref{Tbl_opt_options} that do not include constraint, the value of $(p_0, t_0)$ does not change. In Table \ref{Tbl_opt_options_new}, we present the value of $(p_0, t_0)$, for Option 2 with the new constraint (\ref{Eq_new_constraint_C}) with $C = 0.1, 10, 100$, respectively.
	\begin{table}\footnotesize
		\centering
		\caption{Computed $(p_0, t_0)$ from the optimization problem with indicated objective function and value-pair constraint. Initial condition is $u_0 = C$ in $\Omega\backslash\Omega_C$, with $C\in\{0.1, 10, 100\}$.  Optimization was performed by the function \texttt{optimize.minimize} of \texttt{Scipy} package (version $1.8.0$) in Python.}
		\begin{tabular}{p{0.5cm}<{\centering}p{5.5cm}<{\centering}p{4.5cm}<{\centering}p{2cm}<{\centering}p{2cm}<{\centering}}
			\toprule
			{\bf C} & {\bf Objective function} & {\bf Constraints} & {\bf Value of $p_0$} & {\bf Value of $t_0$} \\
			\midrule
			$0.1$ & \multirow{3}{*}{$ \int_0^T |\phi_{sum}(t) - \phi| dt$} & $\displaystyle 0.1 = \frac{p_0}{4\pi Dt_0}\exp\{-\frac{R^2}{4Dt_0}\}$ & $4.688$ & $0.107$ \\ 
			$10$ & &  $\displaystyle 10 = \frac{p_0}{4\pi Dt_0}\exp\{-\frac{R^2}{4Dt_0}\}$ & $53.422$ & $3.568$\\
			$100$ & &  $\displaystyle 100 = \frac{p_0}{4\pi Dt_0}\exp\{-\frac{R^2}{4Dt_0}\}$ & $9.194\times 10^5$ & $7.310\times10^2$ \\
			\bottomrule
		\end{tabular}
		\label{Tbl_opt_options_new}
	\end{table}

	Figure \ref{Fig_positive_constant_10} shows the results when $C = 10$ is chosen, that is, the initial condition is $10$ over the domain of the spatial exclusion approach. Compared with the results in Figure \ref{Fig_p0_t0_all_cases} when the initial condition is zero, the $H^1-$norm and the $L^2-$norm of the gradient of the difference between the solutions to the two approaches have reduced. The same holds for the quantity $c^*(t)$. All the subplots indicate that using of Gaussian distribution inside the cell as initial condition reduces the difference between the solutions to the two approaches. On the other hand, we note that Option $1$ (red curve in Figure \ref{Fig_positive_constant_10}) performs slightly better than Option 2 with the previous constraint (black curve). This can be attributed to the fact that the pair of $(p_0, t_0)$ computed without constraint in Option 1 results in the value $10.150$ approximately over the boundary of the cell. Hence, the constraint in Equation (\ref{Eq_new_constraint_C}) is almost satisfied (see the value computed in Table \ref{Tbl_opt_options_new}), while the flux difference on the cell boundary over time is minimized at the same time. When $C = 0.1$ is chosen, similar patterns in Figure \ref{Fig_positive_constant_01} appear compared to that the zero initial condition is used. However, Figure \ref{Fig_positive_constant_01}(b)-(d) show a better performance when the continuity of the cell boundary is guaranteed; see the black curve in each subfigure. Next to it, we also conduct the simulation with $C=100$, hence, the values of $(p_0, t_0)$ computed by Option 1 and 2 will still result in a significant difference between the two approaches, with respect to all the quantities. As a consequence, we categorize the simulations into two subgroups: (1) Option 1 and Option 2, and (2) homogeneous initial condition and Option 2 with the new constraint; see Figure \ref{Fig_positive_constant_100}. In all the subfigures, we observe that the results in Subgroup (1) are at least $15$ times larger that the results in Subgroup (2), which supports the necessity to guarantee the continuity of the initial condition, at the cell boundary. Moreover, it can be concluded that with the current parameter values, it is not needed to use a Gaussian-shaped extension. The homogeneous extension of $u_0=C$ by the same constant value in the cell's interior leads to solutions that are comparable in quality as those obtained from Gaussian-shaped extensions. 
	
	\begin{figure}[h!]
		\centering
		\subfigure[$\|u_S(\boldsymbol{x},t) - u_P(\boldsymbol{x},t)\|_{L^2(\Omega\backslash\Omega_C)}$]{\includegraphics[width = 0.48\textwidth]{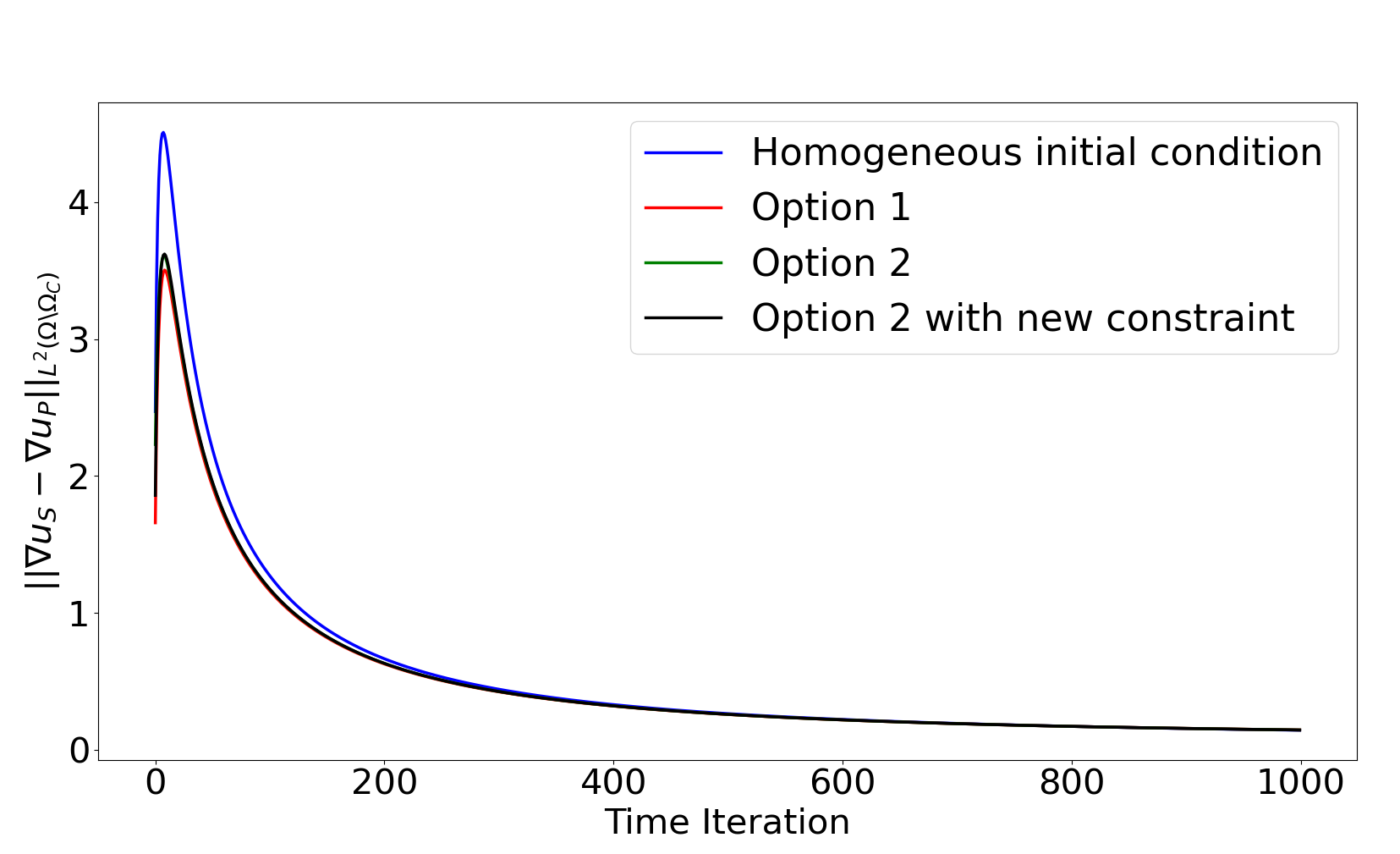}}
		\subfigure[$\|\nabla u_S(\boldsymbol{x},t) - \nabla u_P(\boldsymbol{x},t)\|_{L^2(\Omega\backslash\Omega_C)}$]{\includegraphics[width = 0.48\textwidth]{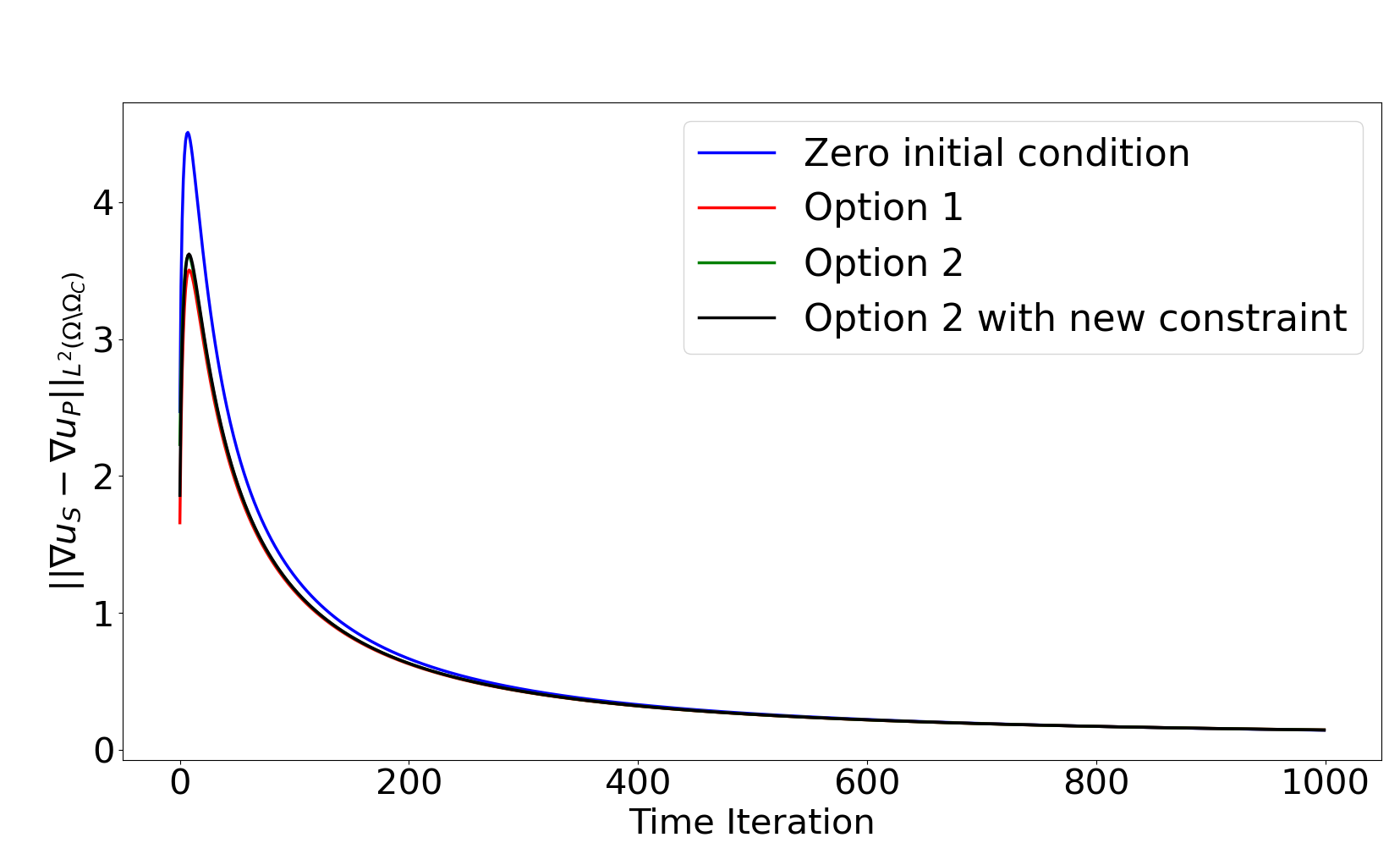}}
		\subfigure[$\|u_S(\boldsymbol{x},t) - u_P(\boldsymbol{x},t)\|_{H^1(\Omega\backslash\Omega_C)}$]{\includegraphics[width = 0.48\textwidth]{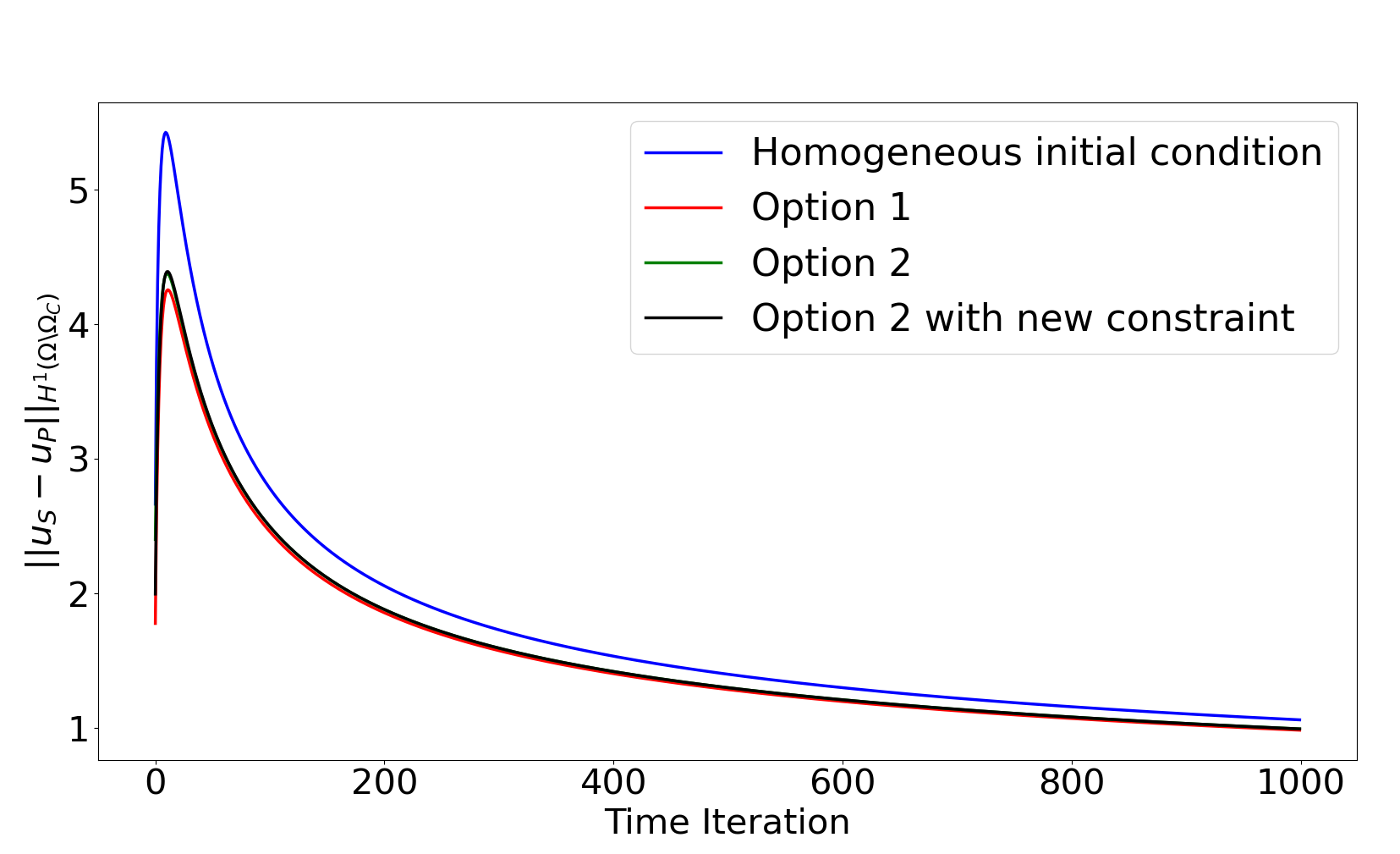}}
		\subfigure[$c^*(t)$]{\includegraphics[width = 0.48\textwidth]{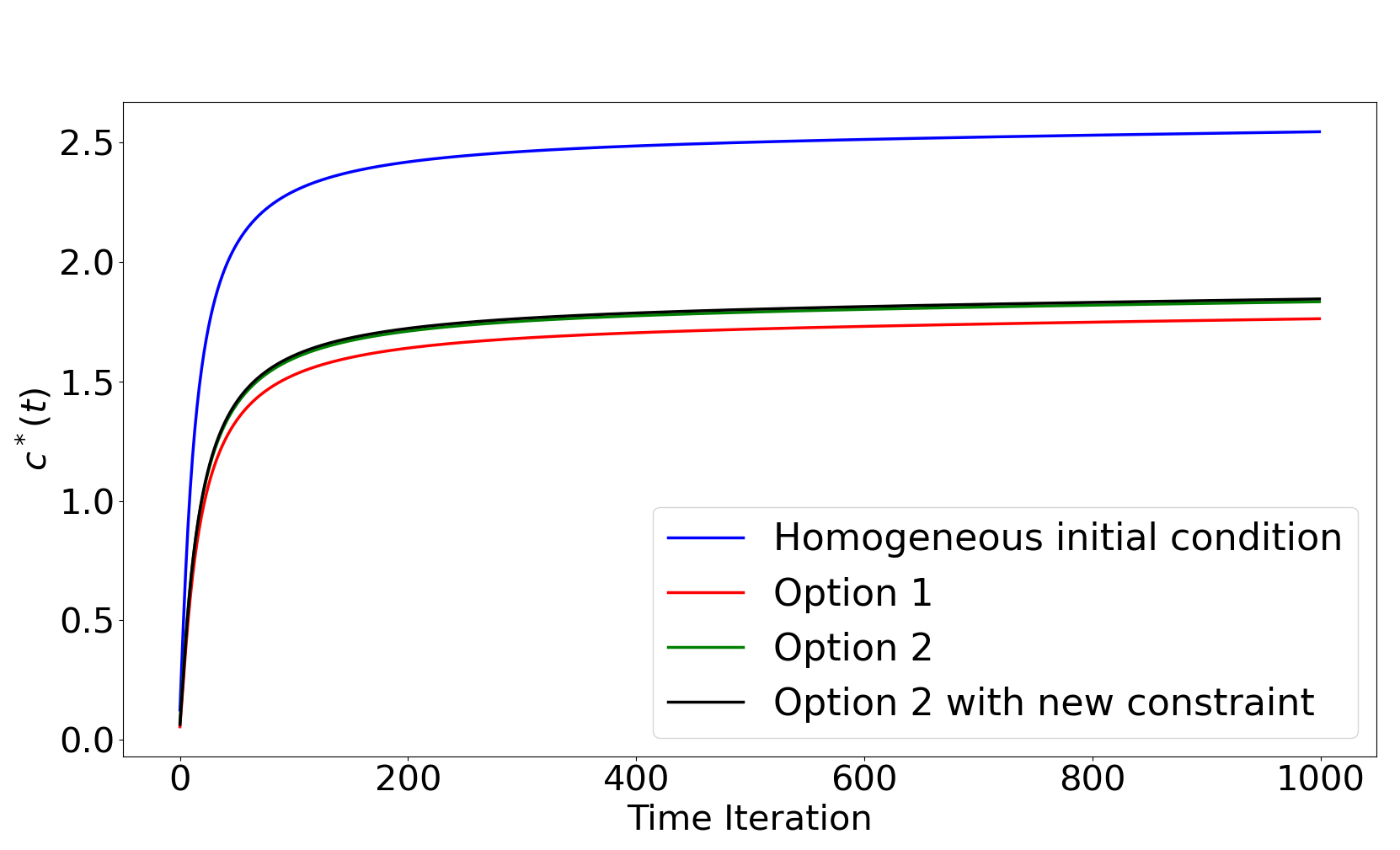}}
		\caption{Various norms of differences between the two approaches and $c^*(t)$ are shown. Here, we consider non-zero initial condition $u_0=C$ with $C = 10$. Shown are the error quantifiers for homogeneously extended initial condition $\bar{u}_0=C$ (blue)  and the Gaussian-shaped extension of the initial condition with $(p_0, t_0)$ computed from Option 1 (red) without value-pair constraint, Option 2 (green) with constraint (Equation \eqref{Eq_p0_t0}) in Table \ref{Tbl_opt_options} and Option 2 with the new continuity constraint (Equation \eqref{Eq_new_constraint_C}) in Table \ref{Tbl_opt_options_new}.}
		\label{Fig_positive_constant_10}
	\end{figure}
	\begin{figure}[h!]
		\centering
		\subfigure[$\|u_S(\boldsymbol{x},t) - u_P(\boldsymbol{x},t)\|_{L^2(\Omega\backslash\Omega_C)}$]{\includegraphics[width = 0.48\textwidth]{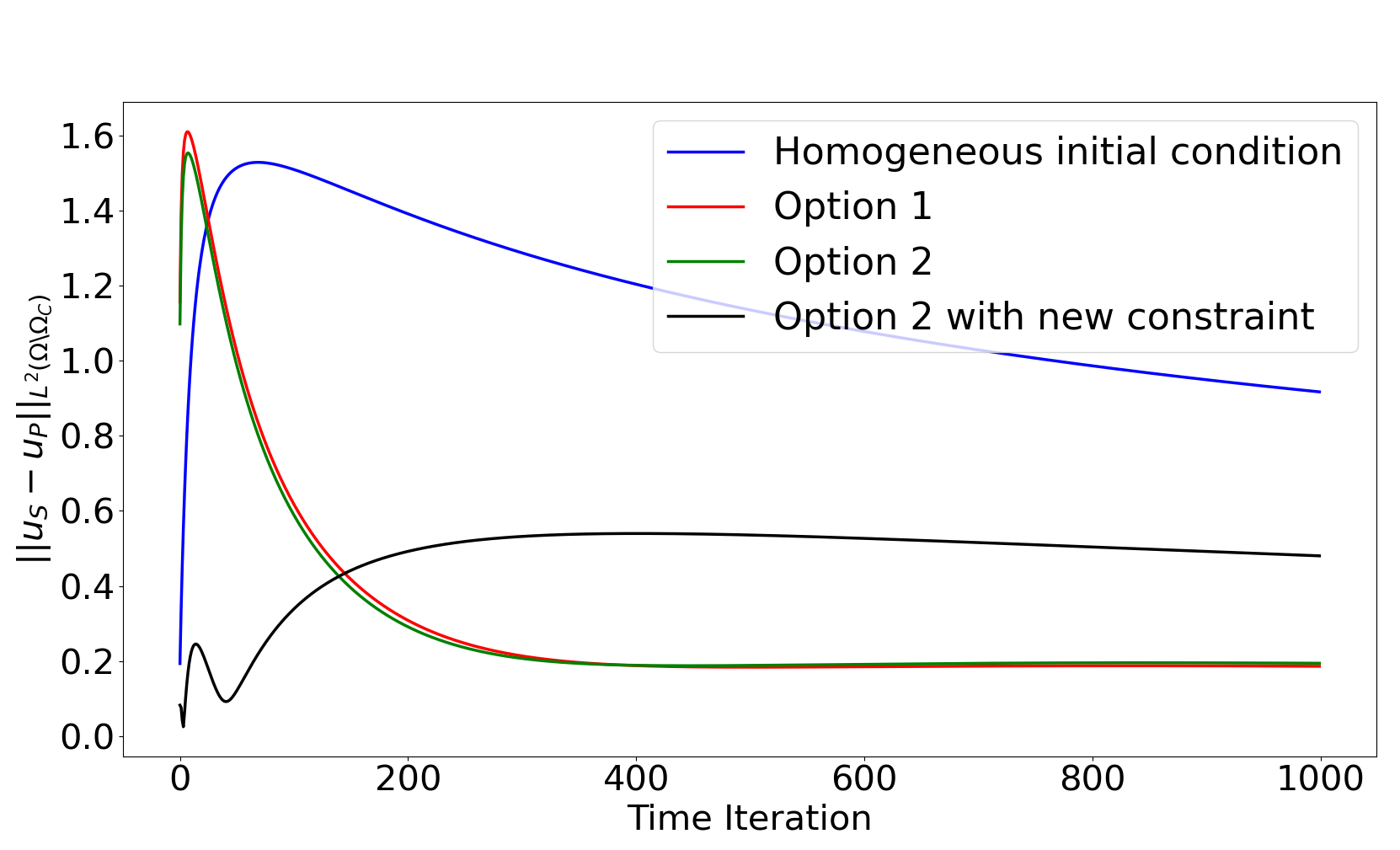}}
		\subfigure[$\|\nabla u_S(\boldsymbol{x},t) - \nabla u_P(\boldsymbol{x},t)\|_{L^2(\Omega\backslash\Omega_C)}$]{\includegraphics[width = 0.48\textwidth]{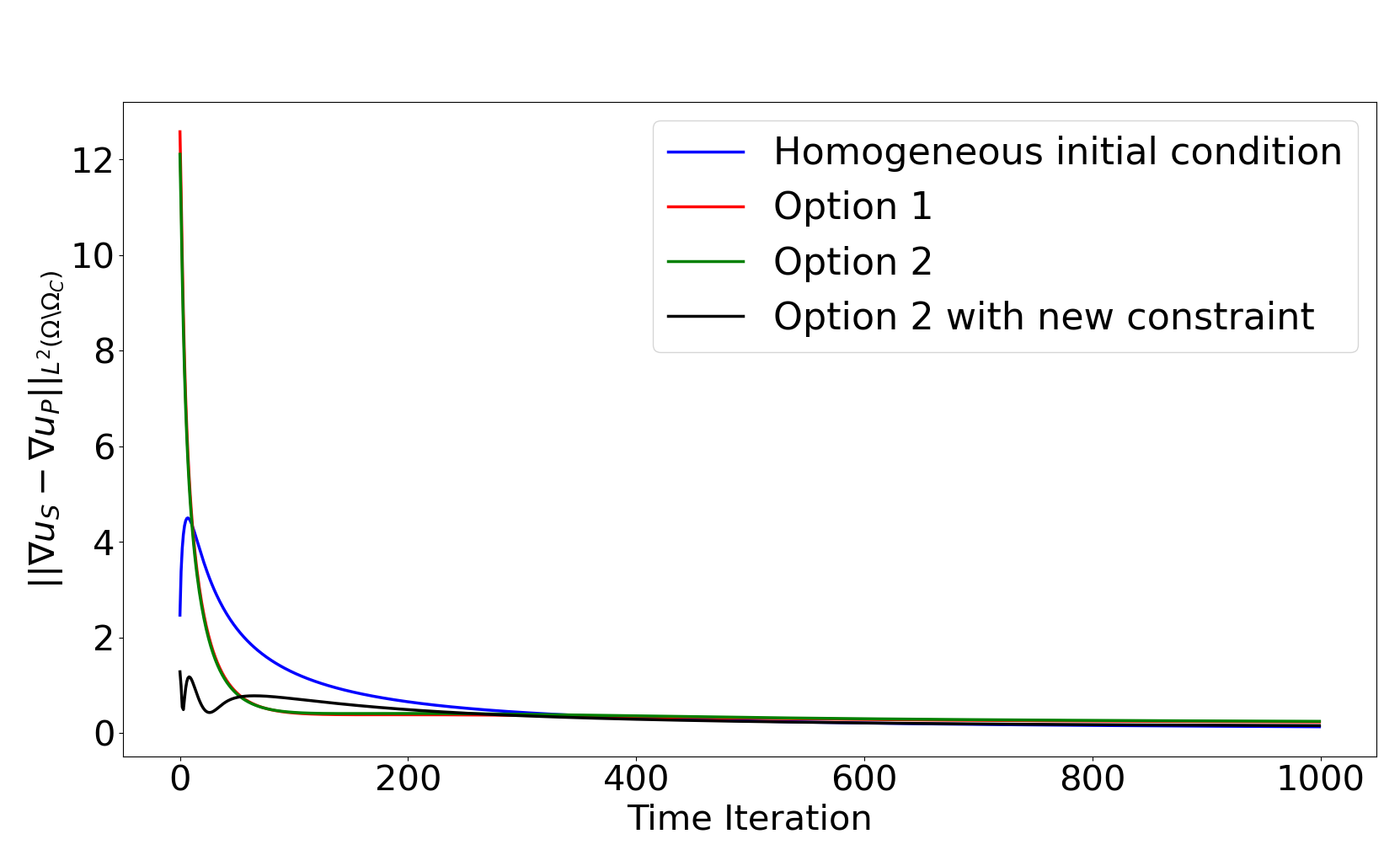}}
		\subfigure[$\|u_S(\boldsymbol{x},t) - u_P(\boldsymbol{x},t)\|_{H^1(\Omega\backslash\Omega_C)}$]{\includegraphics[width = 0.48\textwidth]{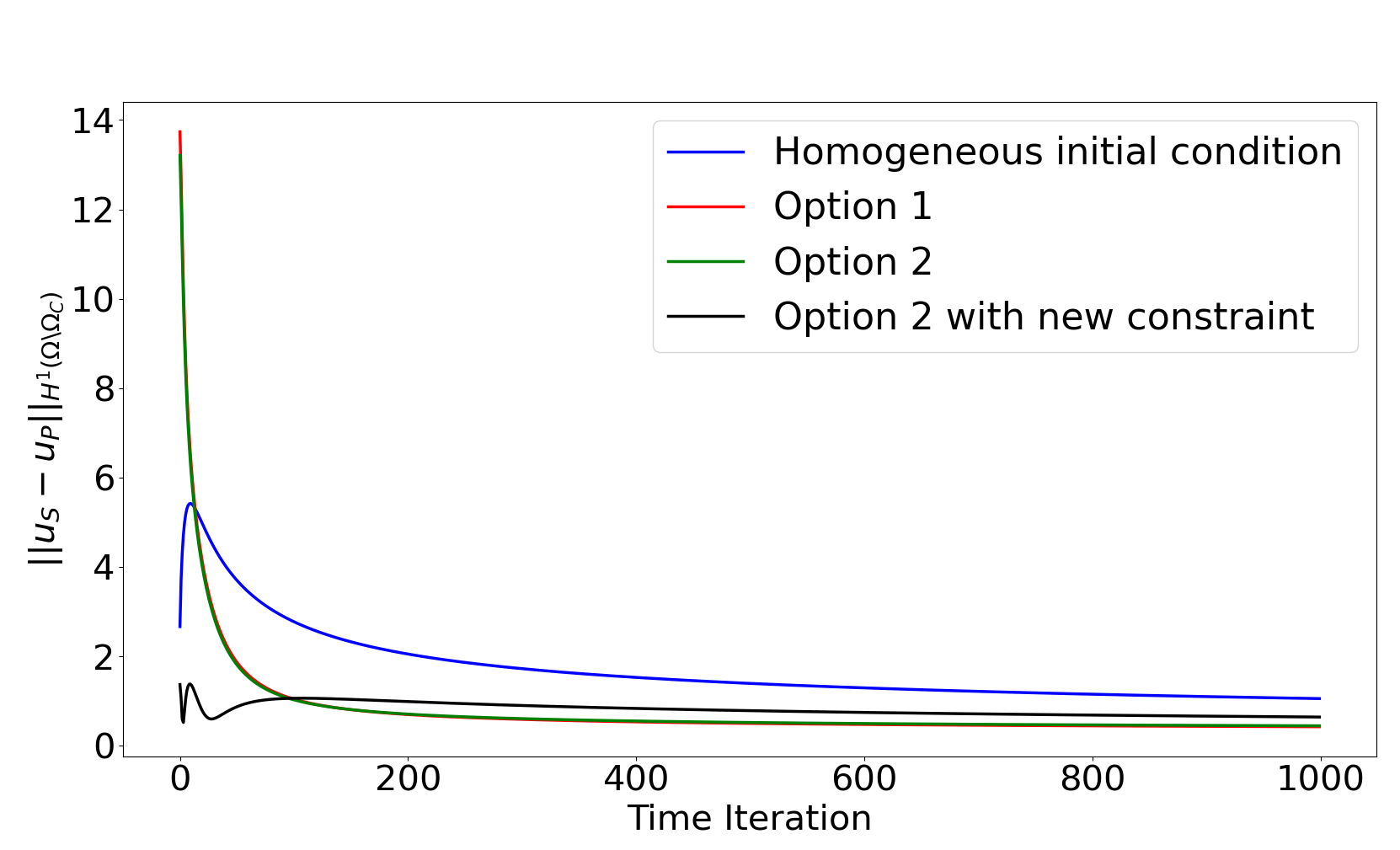}}
		\subfigure[$c^*(t)$]{\includegraphics[width = 0.48\textwidth]{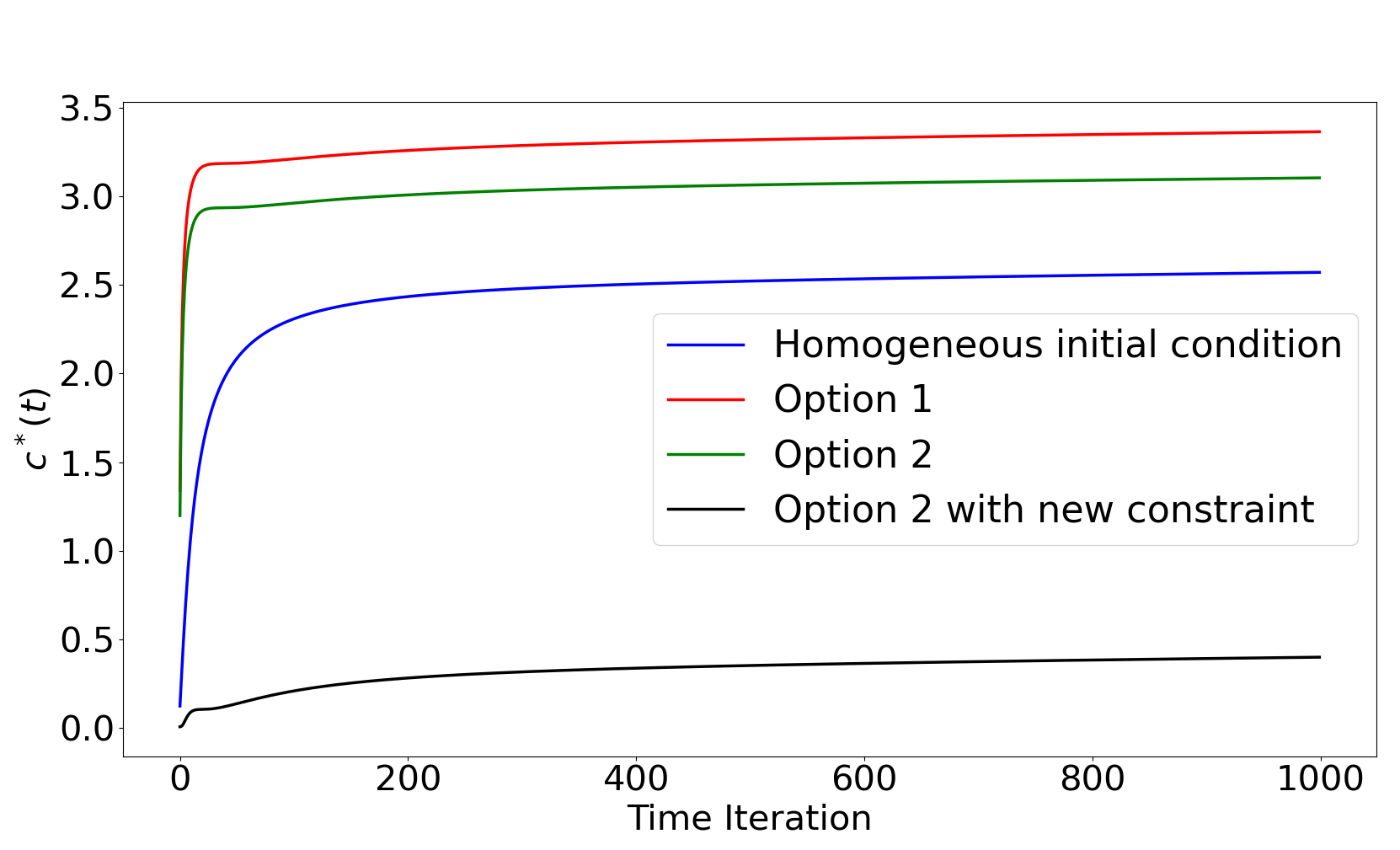}}
		\caption{Various norms of differences between the two approaches and $c^*(t)$ are shown. Here, we consider non-zero initial condition $u_0=C$ with $C = 0.1$. Shown are the error quantifiers for homogeneously extended initial condition $\bar{u}_0=C$ (blue)  and the Gaussian-shaped extension of the initial condition with $(p_0, t_0)$ computed from Option 1 (red) without value-pair constraint, Option 2 (green) with constraint (Equation \eqref{Eq_p0_t0}) in Table \ref{Tbl_opt_options} and Option 2 with the new continuity constraint (Equation \eqref{Eq_new_constraint_C}) in Table \ref{Tbl_opt_options_new}.}
		\label{Fig_positive_constant_01}
	\end{figure}
	\begin{figure}[h!]
		\centering
		\subfigure[$\|u_S(\boldsymbol{x},t) - u_P(\boldsymbol{x},t)\|_{L^2(\Omega\backslash\Omega_C)}$]{\includegraphics[width = 0.48\textwidth]{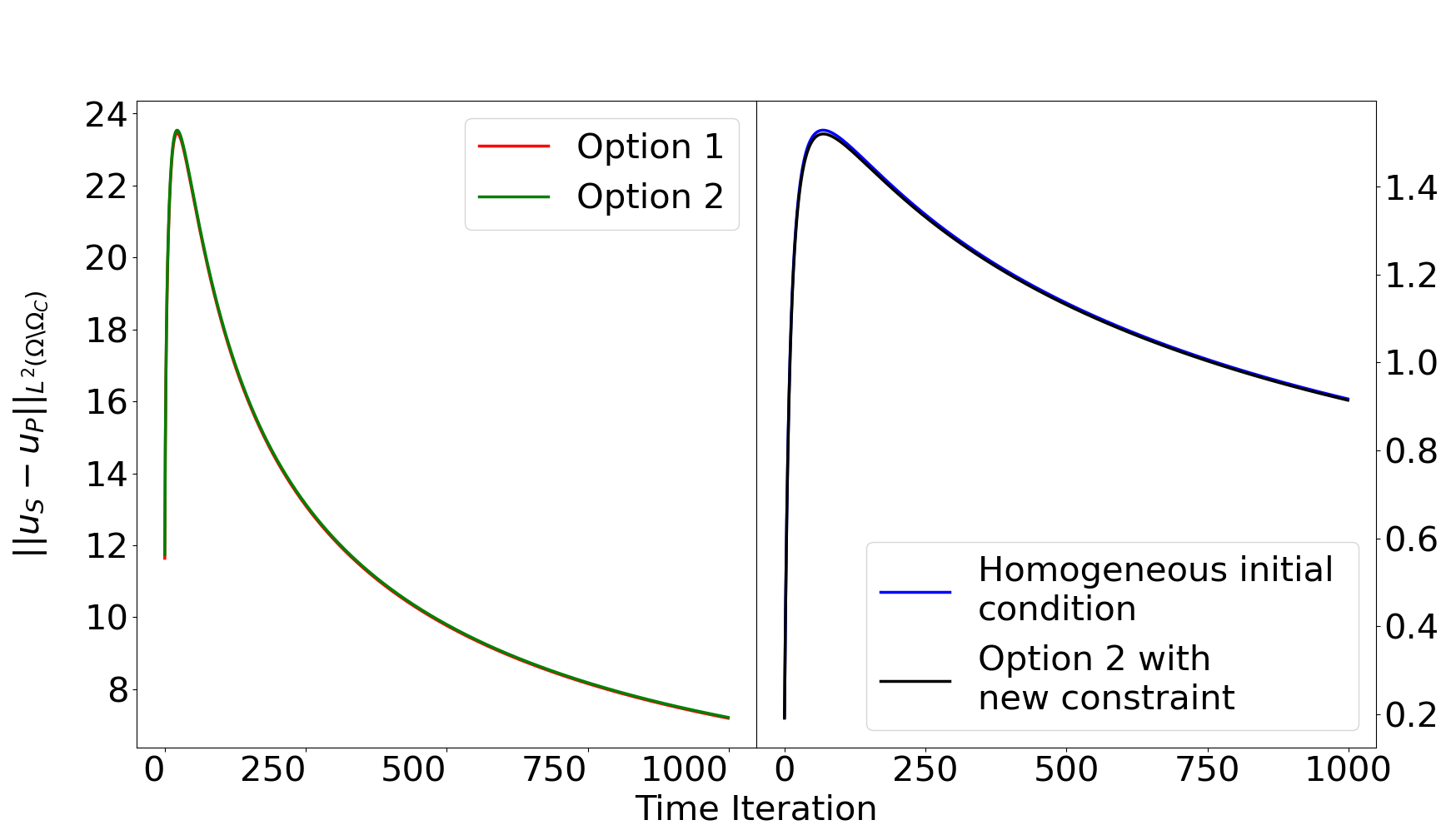}}
		\subfigure[$\|\nabla u_S(\boldsymbol{x},t) - \nabla u_P(\boldsymbol{x},t)\|_{L^2(\Omega\backslash\Omega_C)}$]{\includegraphics[width = 0.48\textwidth]{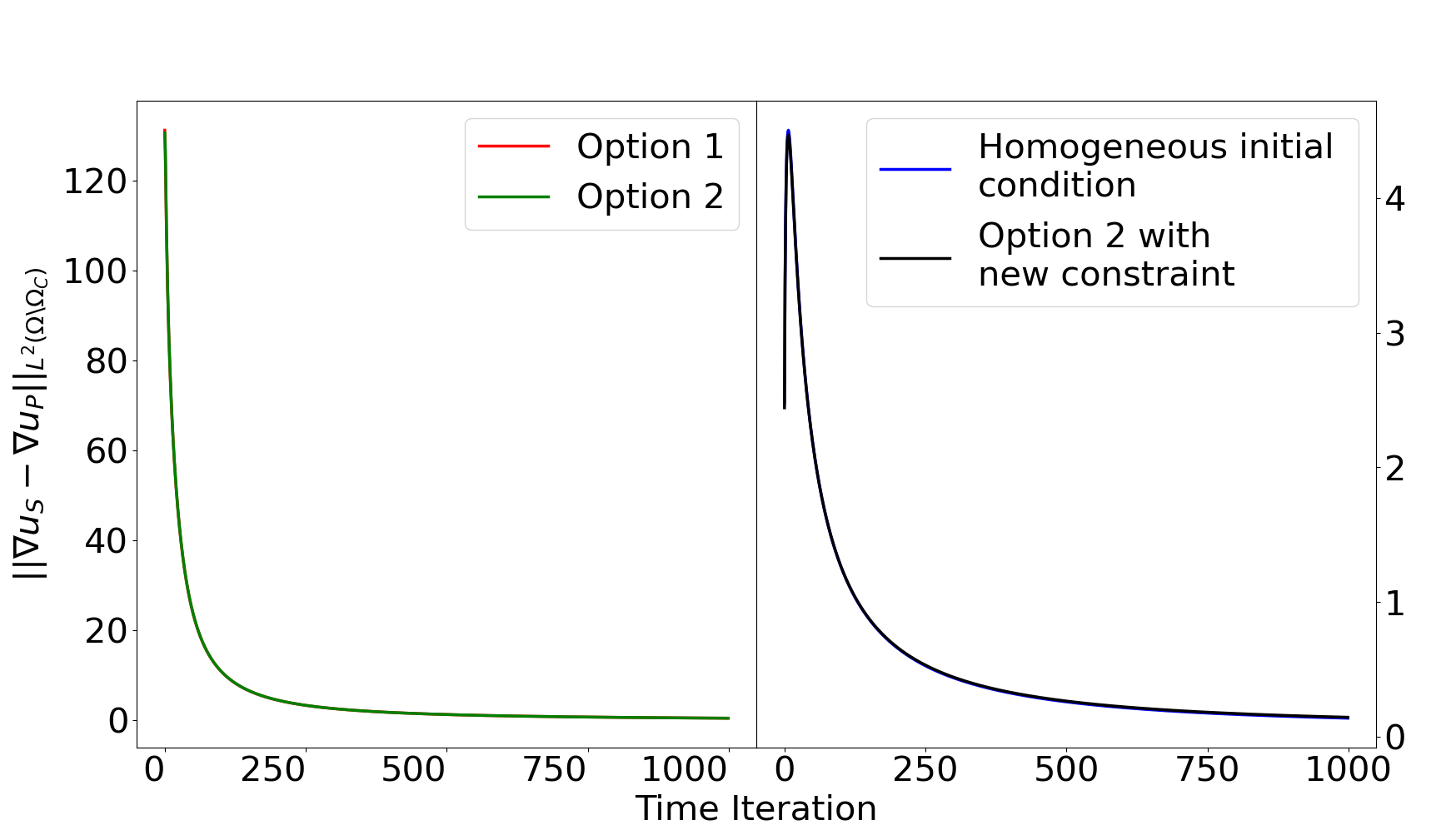}}
		\subfigure[$\|u_S(\boldsymbol{x},t) - u_P(\boldsymbol{x},t)\|_{H^1(\Omega\backslash\Omega_C)}$]{\includegraphics[width = 0.48\textwidth]{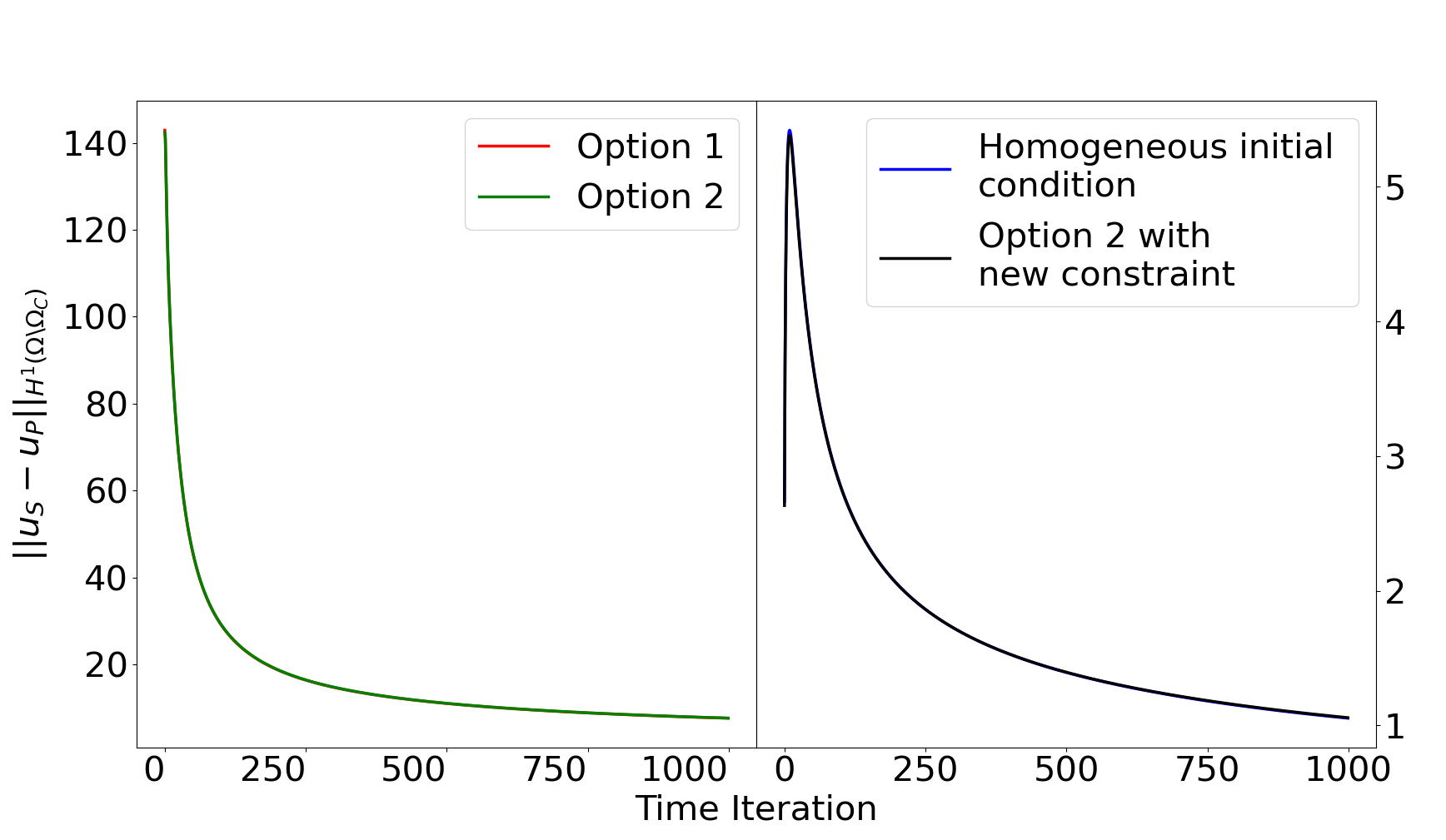}}
		\subfigure[$c^*(t)$]{\includegraphics[width = 0.48\textwidth]{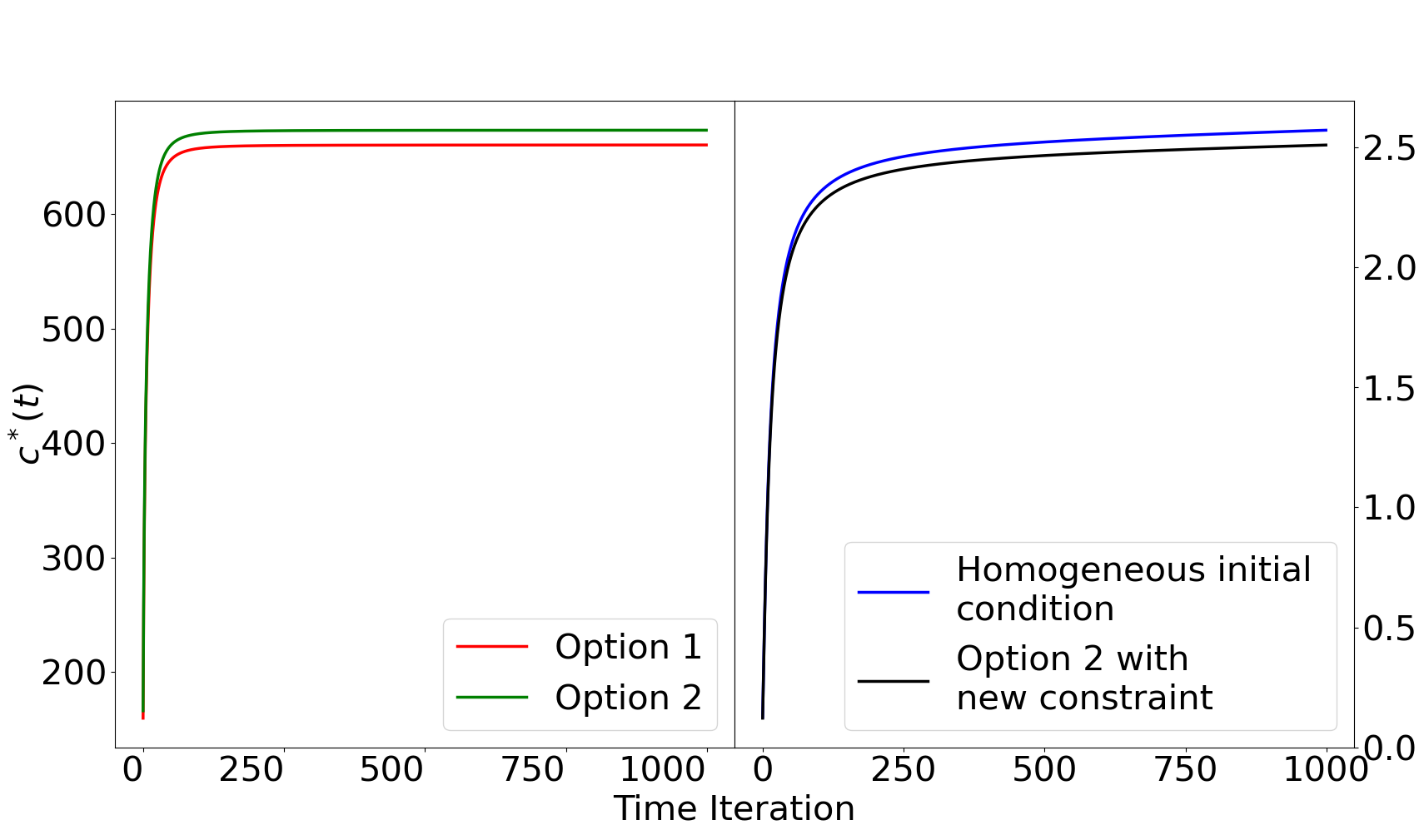}}
		\caption{Various norms of differences between the two approaches and $c^*(t)$ are shown. Here, we consider non-zero initial condition $u_0=C$ with $C = 100$. Shown are the error quantifiers for homogeneously extended initial condition $\bar{u}_0=C$ (blue) and the Gaussian-shaped extension of the initial condition with $(p_0, t_0)$ computed from Option 1 (red) without value-pair constraint, Option 2 (green) with constraint (Equation \eqref{Eq_p0_t0}) in Table \ref{Tbl_opt_options} and Option 2 with the new continuity constraint (Equation \eqref{Eq_new_constraint_C}) in Table \ref{Tbl_opt_options_new}.}
		\label{Fig_positive_constant_100}
	\end{figure}
	
	For different values of $C$ in $\{0.1,  10, 100\}$ there is quite a different total amount of mass in the environment. A relative measure of error is therefore appropriate in order to compare the different cases, i.e. initial conditions $u_0=C$ vary with $D=0.1$ fixed. We propose the ratio of the total flux difference over the cell boundary and the total mass in $\Omega\backslash\Omega_C$. By the Cauchy-Schwarz Inequality one can bound this relative quantifier by
	\begin{align}\displaystyle
		& \frac{\int_0^t\int_{\partial\Omega_C}|\phi(\boldsymbol{x}, s) - u_P\nabla D\cdot n|d\Gamma ds}{\|u_S(\boldsymbol{x}, t)\|_{L^1(\Omega\backslash\Omega_C)}} \nonumber \\
		& \qquad\quad \leq\ \frac{\sqrt{|\partial\Omega_C|}\int_0^t \|\phi(\boldsymbol{x}, s) - D\nabla u_P\cdot\boldsymbol{n}ds\|_{L^2(\partial\Omega_C)}ds}{\|u_S(\boldsymbol{x}, t)\|_{L^1(\Omega\backslash\Omega_C)}} 
		\ =:\ r.e(t). \label{Eq_relative_c_star}
	\end{align}
	The latter quantity $r.e(t)$ is more convenient because of its relation to $c^*(t)$.

	Note that the denominator in expression \eqref{Eq_relative_c_star} can be computed analytically as 
	\[
	\|u_S(\boldsymbol{x}, t)\|_{L^1(\Omega\backslash\Omega_C)} = \bigl|\Omega\backslash\Omega_C\bigr|C+2\pi R\phi t,
	\]
	where $|\Omega\backslash\Omega_C|$ is the area of the domain in the spatial exclusion approach, and $\phi$ is the constant flux density over the boundary. Figure \ref{Fig_relat_error_c_star} shows the relative errors for various value of $C$. All the relative errors are less than $8\%$.
	\begin{figure}[h!]
		\centering
		\includegraphics[width = 0.75\textwidth]{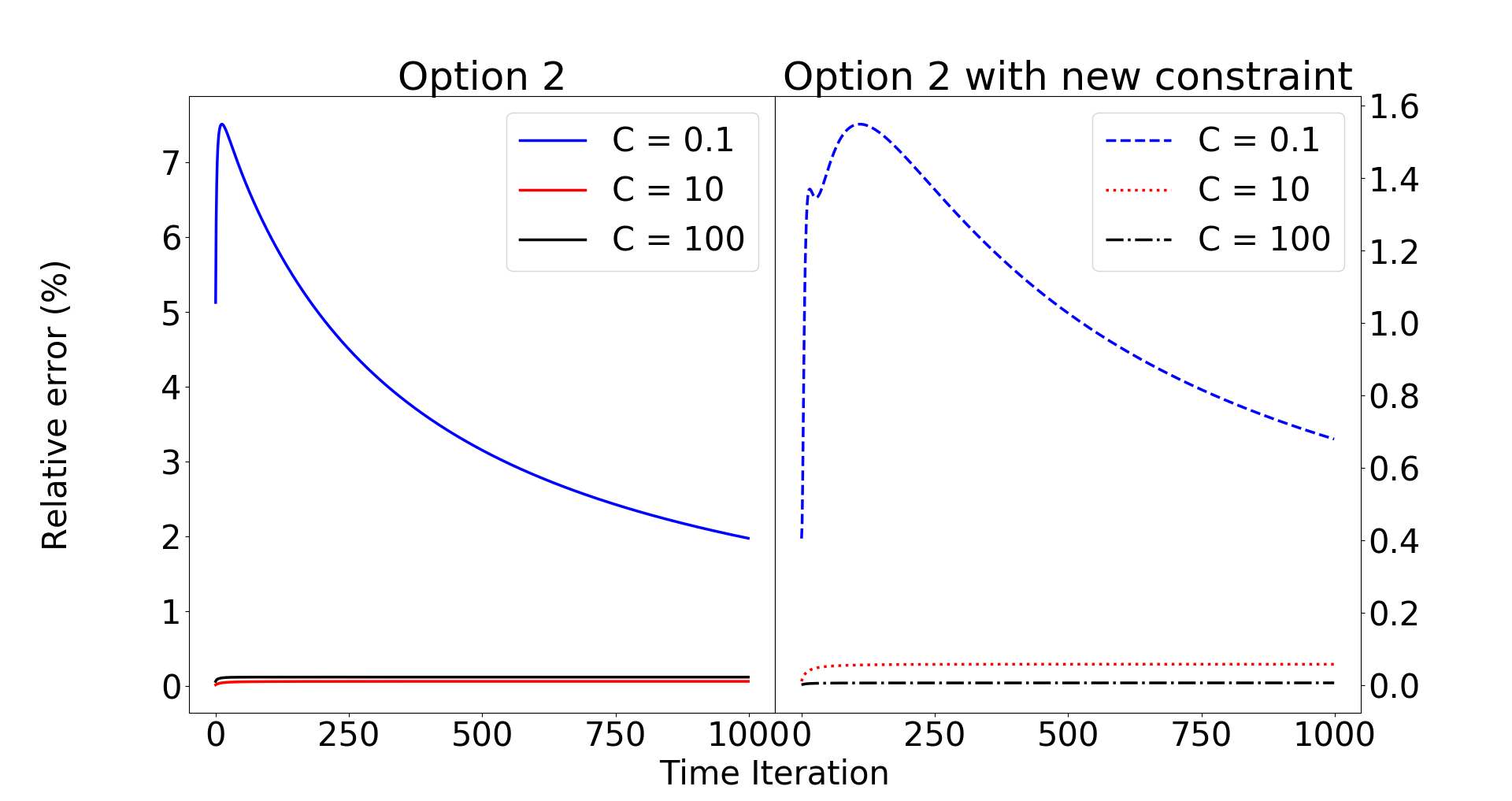}
		\caption{The relative errors defined in Equation (\ref{Eq_relative_c_star}) are shown in the plot, with $C = 0.1, 10, 100$, respectively. Here, we focus on Option 2 given in Table \ref{Tbl_opt_options} and Option 2 with new constraint in Table \ref{Tbl_opt_options_new}.}
		\label{Fig_relat_error_c_star}
	\end{figure}
	
	\section{Conclusions and Discussion}\label{Sec_Conclu}
	\noindent
	In this paper, we investigated conditions needed to replace the spatial exclusion model for mass emitting objects (`cells') with diffusion in the environment by a point source model for the sake of computational efficiency or theoretical convenience  in such a way that solutions to the latter still approximate well the corresponding solutions to the former in $L^2$- and $H^1$-norms. Such an approximation is convenient when many cells are considered in the computational domain and is particularly interesting when these cells will be moving. Here, we considered the simplest case of a circular cell, fixed at its initial location, that keeps releasing the compounds at a constant rate. This compound is not taken up by the cell, nor is there a reaction in the environment. It stays in the environment, diffusing, forever after release. This simplified situation has been studied first to identify sources of error in the approximation that stem from the replacement of a spatial (circular) objects by a point source only.
	\vskip 0.2cm
	
	\noindent We identified two main sources of error: 
	\begin{enumerate}
		\item [(1)] A {\bf systematic time delay} between the solution of the spatial exclusion model and of the point source model, where the latter is lagging behind the former. It originates in inappropriateness of the chosen extension of the initial condition of the spatial exclusion model on the environment to the whole domain, on which the point source model is defined. This time delay occurs already for a single cell in the domain. It decreases with increasing diffusivity. The delay can be turned into a transient effect e.g. by choosing an extension by means of a Gaussian-shaped function on the part of the domain that corresponds to the cell's interior.
		
		\item [(1)] {\bf Absence of reflection} in the point source model. In this model, diffusing compounds will move simply through the parts of the domain that constitute the cells' interior in the spatial exclusion model. In that model, these would have reflected on the cell boundary and hence stayed in the environment. On the long run, after many reflections, the two solutions should become different. 
	\end{enumerate} 
	
	First, we provided an analytical condition such that the solutions to the two models are consistent, i.e. that they are equal on their shared domain of definition, the cells' environment. Although this condition can never be satisfied exactly, it provides clues how to minimize the deviation between the two approaches: minimizing the deviation between the prescribed and generated flux density over the cell boundary. Hence, the choice for the quantity $c^*(t)$, introduced in \citet{HMEvers2015} as a measure for the quality of approximation, is motivated. It is compared to the use of $L^2$- and $H^1$-norms over the environmental domain, which are more expensive to compute. 
	
	There are infinitely many ways of extending the initial condition on the environment to the whole domain. We chose the Gaussian-shaped extension, since it already `fits' the diffusion process. Moreover, the preliminary results showed that all norms of differences between the two approaches converged towards a stable value. Thus, we were inspired to extend by means of the fundamental solution of the diffusion equation, with a chosen  amplitude and variance as parameters. Numerical results show the significant improvements in all the quantities, such as the local norm differences and $c^*(t)$ and the disappearance of the time delay after a transient. 
	
	This raised the question how to select the value of the two parameters. We proposed and compared multiple options, which all boil down to solving single-objective optimization problems. There seems to be no ``{\it best}" option. Moreover, a good choice may depend on which quantity is preferred for error quantification. When a non-zero initial condition is utilized, approximation quality improves when the extended initial condition has continuity of the flux over the boundary of the cells. This suggests to consider a multi-objective optimization problem in future work, where deviation from this continuity of flux in the initial condition is balanced with a measure of error of the flux over this boundary over time. This is a topic for future research.
	
	The examination of the case of multiple circular cells of identical radius in the domain indicated the effects of absence of reflection on the quality of approximation. This effect is also present in the situation of a single cell, but then it is less apparent. When there are multiple cells in the domain and their mutual distance is small compared to their radius, the solutions tend to deviate more. In this paper, we focused on removing the systematic time delay by proper extension of the initial condition. Further investigation is required what conditions can be imposed such that this source of error remains within acceptable tolerance, e.g. on this minimal distance between objects in comparison to their diameter and the diffusion constant, and the length of the simulation interval.
	\vskip 0.2cm
	
	For forthcoming work, there are various possibilities of interest. In addition to investigating the interactions between multiple cells and related conditions mentioned above, one can think of considering the prescription of inhomogeneous flux density over the cell boundary in space and possibly also over time. Moreover, cells may also take up compound from the environment. In the current paper, only circular cells have been considered, of equal radius, as an ideal study case. However, in applications, the cells will have various shapes. This can be resolved in a two-step approach. A spatial exclusion model with flux over such non-circular shaped boundary may first be approximated by such a model with inhomogeneous flux density over a circular-shaped boundary. The latter may then be approximated by a suitable point source model. This last type of approximation will be considered in a forthcoming paper. In the first type of approximation, we anticipate error caused by `curvature effects'. The curvature of the cell boundary influences the overall diffusion of the chemicals in case of reflection boundary conditions. In particular, sharp corners tend to confine diffusing matter and therefore may need particular attention, in a sense similar to `absence of reflection' as source of error mentioned above.
	
	In summary, this paper provided practical conditions to obtain an acceptable approximation of the solution of a spatial exclusion model by means of a point source model. We suggested multiple options to implement conditions numerically. We discussed how to select the two shape-parameters of a Gaussian-shaped extension of the initial condition of the spatial exclusion model in a practical setting. We provide insights into deviations that will be introduced by transferring a spatial exclusion model with diffusion to a point source model and how two maximize the consistency between the two solutions in their common spatial domain, over the time interval of simulation.

	\newpage
	\bibliographystyle{abbrvnat}
	\bibliography{Diffusion_hole_dirac_delta}

\begin{thebibliography}{16}
\providecommand{\natexlab}[1]{#1}
\providecommand{\url}[1]{\texttt{#1}}
\expandafter\ifx\csname urlstyle\endcsname\relax
  \providecommand{\doi}[1]{doi: #1}\else
  \providecommand{\doi}{doi: \begingroup \urlstyle{rm}\Url}\fi

\bibitem[Alnæs et~al.(2015)Alnæs, Blechta, Hake, Johansson, Kehlet, Logg,
  Richardson, Ring, Rognes, and Wells]{AlnaesEtal2015}
M.~Alnæs, J.~Blechta, J.~Hake, A.~Johansson, B.~Kehlet, A.~Logg,
  C.~Richardson, J.~Ring, M.~E. Rognes, and G.~N. Wells.
\newblock The fenics project version 1.5.
\newblock \emph{<p>Archive of Numerical Software}, Vol 3:\penalty0
  <strong>Starting Point and Frequency: </strong>Year: 2013</p>, 2015.
\newblock \doi{10.11588/ANS.2015.100.20553}.
\newblock URL
  \url{http://journals.ub.uni-heidelberg.de/index.php/ans/article/view/20553}.

\bibitem[Evans(2010)]{evans2010partial}
L.~C. Evans.
\newblock \emph{Partial differential equations}, volume~19.
\newblock American Mathematical Soc., 2010.

\bibitem[Evers et~al.(2015)Evers, Hille, and Muntean]{HMEvers2015}
J.~H. Evers, S.~C. Hille, and A.~Muntean.
\newblock Modelling with measures: Approximation of a mass-emitting object by a
  point source.
\newblock \emph{Mathematical Biosciences and Engineering}, 12\penalty0
  (2):\penalty0 357--373, 2015.
\newblock \doi{10.3934/mbe.2015.12.357}.
\newblock URL \url{https://doi.org/10.3934/mbe.2015.12.357}.

\bibitem[Jackson(1999)]{jackson1999classical}
J.~D. Jackson.
\newblock Classical electrodynamics, 1999.

\bibitem[Jiao et~al.(2019)Jiao, He, and Jiao]{Jiao_2019}
Y.~Jiao, Y.~He, and F.~Jiao.
\newblock Two-dimensional simulation of motion of red blood cells with
  deterministic lateral displacement devices.
\newblock \emph{Micromachines}, 10\penalty0 (6):\penalty0 393, jun 2019.
\newblock \doi{10.3390/mi10060393}.
\newblock URL \url{https://doi.org/10.3390%2Fmi10060393}.

\bibitem[Painter and Sherratt(2003)]{Painter_2003}
K.~J. Painter and J.~A. Sherratt.
\newblock Modelling the movement of interacting cell populations.
\newblock \emph{Journal of Theoretical Biology}, 225\penalty0 (3):\penalty0
  327--339, dec 2003.
\newblock \doi{10.1016/s0022-5193(03)00258-3}.
\newblock URL \url{https://doi.org/10.1016%2Fs0022-5193%2803%2900258-3}.

\bibitem[Peng and Vermolen(2020)]{Peng2020}
Q.~Peng and F.~Vermolen.
\newblock Agent-based modelling and parameter sensitivity analysis with a
  finite-element method for skin contraction.
\newblock \emph{Biomechanics and Modeling in Mechanobiology}, 19\penalty0
  (6):\penalty0 2525--2551, July 2020.
\newblock \doi{10.1007/s10237-020-01354-z}.
\newblock URL \url{https://doi.org/10.1007/s10237-020-01354-z}.

\bibitem[Peng and Vermolen(2022{\natexlab{a}})]{Peng2022JCAM}
Q.~Peng and F.~Vermolen.
\newblock Numerical methods to compute stresses and displacements from cellular
  forces: Application to the contraction of tissue.
\newblock \emph{Journal of Computational and Applied Mathematics},
  404:\penalty0 113892, Apr. 2022{\natexlab{a}}.
\newblock \doi{10.1016/j.cam.2021.113892}.
\newblock URL \url{https://doi.org/10.1016/j.cam.2021.113892}.

\bibitem[Peng and Vermolen(2022{\natexlab{b}})]{Peng2022MATCOM}
Q.~Peng and F.~Vermolen.
\newblock Point forces in elasticity equation and their alternatives in multi
  dimensions.
\newblock \emph{Mathematics and Computers in Simulation}, 199:\penalty0
  182--201, Sept. 2022{\natexlab{b}}.
\newblock \doi{10.1016/j.matcom.2022.03.021}.
\newblock URL \url{https://doi.org/10.1016/j.matcom.2022.03.021}.

\bibitem[Peng et~al.(2021)Peng, Vermolen, and Weihs]{Peng2021}
Q.~Peng, F.~J. Vermolen, and D.~Weihs.
\newblock A formalism for modelling traction forces and cell shape evolution
  during cell migration in various biomedical processes.
\newblock \emph{Biomechanics and Modeling in Mechanobiology}, Apr. 2021.
\newblock \doi{10.1007/s10237-021-01456-2}.
\newblock URL \url{https://doi.org/10.1007/s10237-021-01456-2}.

\bibitem[Schwartz(1951)]{Schwartz1951}
L.~Schwartz.
\newblock \emph{Th{\'e}orie des distributions}, volume 1--2.
\newblock Hermann, 1951.

\bibitem[Secomb et~al.(2007)Secomb, Styp-Rekowska, and Pries]{Secomb_2007}
T.~W. Secomb, B.~Styp-Rekowska, and A.~R. Pries.
\newblock Two-dimensional simulation of red blood cell deformation and lateral
  migration in microvessels.
\newblock \emph{Annals of Biomedical Engineering}, 35\penalty0 (5):\penalty0
  755--765, mar 2007.
\newblock \doi{10.1007/s10439-007-9275-0}.
\newblock URL \url{https://doi.org/10.1007%2Fs10439-007-9275-0}.

\bibitem[Stock and Baker(2009)]{Stock_2009}
J.~Stock and M.~Baker.
\newblock Chemotaxis.
\newblock In \emph{Encyclopedia of Microbiology}, pages 71--78. Elsevier, 2009.
\newblock \doi{10.1016/b978-012373944-5.00068-7}.
\newblock URL \url{https://doi.org/10.1016%2Fb978-012373944-5.00068-7}.

\bibitem[van Kan et~al.(2005)van Kan, Segal, and Vermolen]{van2005numerical}
J.~van Kan, A.~Segal, and F.~J. Vermolen.
\newblock \emph{Numerical methods in scientific computing}.
\newblock VSSD, 2005.

\bibitem[Wadhams and Armitage(2004)]{Wadhams_2004}
G.~H. Wadhams and J.~P. Armitage.
\newblock Making sense of it all: bacterial chemotaxis.
\newblock \emph{Nature Reviews Molecular Cell Biology}, 5\penalty0
  (12):\penalty0 1024--1037, dec 2004.
\newblock \doi{10.1038/nrm1524}.
\newblock URL \url{https://doi.org/10.1038%2Fnrm1524}.

\bibitem[Westphal(1968)]{Westphal_1968}
W.~H. Westphal.
\newblock Mechanics of point masses and rigid bodies.
\newblock In \emph{A Short Textbook of Physics}, pages 6--57. Springer Berlin
  Heidelberg, 1968.
\newblock \doi{10.1007/978-3-642-85476-7_2}.
\newblock URL \url{https://doi.org/10.1007%2F978-3-642-85476-7_2}.

\end{thebibliography}

	\begin{appendix}
		\section*{Appendices}\label{Sec_appendix}
		\section{Solution Consistency between Two Approaches}\label{Sec_proposition_pf}
		\noindent
		It is out of interest to compare the two approaches, regarding the consistency of the solutions. We recall Proposition \ref{Prop_condition}, which shows the essential and necessary condition to obtain consistent solutions:
		\prop*
		\begin{proof}
			Equation \eqref{eq:L2-norm differnce} is obtained as part of the proof of sufficiency of the condition $D\nabla u_P(\boldsymbol{x},t)\cdot\boldsymbol{n} = \phi(\boldsymbol{x},t)$ (a.e.). To that end, we work on the weak forms (see Section \ref{Sec_sol_concept})  of the two boundary value problems. 
			Note that $u_S(\boldsymbol{x},t)$ and $u_P(\boldsymbol{x},t)$ are defined on different spatial domains. We subtract the two equations in their weak forms, which yields
			\begin{align*}
				&\int_{\Omega\backslash\bar{\Omega}_C}\frac{\partial u_S(\boldsymbol{x},t)}{\partial t}v_1(\boldsymbol{x},t) - \frac{\partial u_P(\boldsymbol{x},t)}{\partial t}v_2(\boldsymbol{x},t) d\Omega - \int_{\Omega_C}\frac{\partial u_P(\boldsymbol{x},t)}{\partial t}v_2(\boldsymbol{x},t)d\Omega + \int_{\partial\Omega_C}\phi(\boldsymbol{x},t)v_1(\boldsymbol{x},t)d\Gamma \\
				&+ \int_{\Omega\backslash\bar{\Omega}_C}D\nabla u_S(\boldsymbol{x},t)\nabla v_1(\boldsymbol{x},t) - D\nabla u_P(\boldsymbol{x},t)\nabla v_2(\boldsymbol{x},t)d\Omega - \int_{\Omega_C}D\nabla u_P(\boldsymbol{x},t)\nabla v_2(\boldsymbol{x},t)d\Omega \\
				&= -\int_{\Omega}\Phi(\boldsymbol{x},t)\delta(\boldsymbol{x} - \boldsymbol{x}_c)v_2(\boldsymbol{x},t)d\Omega.
			\end{align*}
			After some simplifications (applying Gauss Theorem and boundary conditions) and substituting $\displaystyle\frac{\partial u_P(\boldsymbol{x},t)}{\partial t} = D\Delta u_P + \Phi(\boldsymbol{x},t)\delta(\boldsymbol{x}-\boldsymbol{x}_c)$ in $\Omega_C$, we obtain
			\begin{align*}
				&\int_{\Omega\backslash\bar{\Omega}_C}\frac{\partial u_S(\boldsymbol{x},t)}{\partial t}v_1(\boldsymbol{x},t) - \frac{\partial u_P(\boldsymbol{x},t)}{\partial t}v_2(\boldsymbol{x},t) d\Omega + D\int_{\Omega\backslash\bar{\Omega}_C} \nabla u_S(\boldsymbol{x},t)\nabla v_1(\boldsymbol{x},t) - \nabla u_P(\boldsymbol{x},t)\nabla v_2(\boldsymbol{x},t) d\Omega \\
				&= \int_{\partial\Omega_C}D\nabla u_P(\boldsymbol{x},t)\cdot\boldsymbol{n} v_2(\boldsymbol{x},t)-\phi(\boldsymbol{x},t)v_1(\boldsymbol{x},t)d\Gamma.
			\end{align*}
			As $v_1(\boldsymbol{x},t)$ and $v_2(\boldsymbol{x},t)$ are test functions in Hilbert space, we select $v_1(\boldsymbol{x},t) = v_2(\boldsymbol{x},t) = w(\boldsymbol{x},t):= u_S(\boldsymbol{x},t) - u_P(\boldsymbol{x},t)$ in $\Omega\backslash\bar{\Omega}_C$. Then the above equation becomes 
			\begin{equation}
				\label{Eq_w}
				\begin{aligned}
					&\int_{\Omega\backslash\bar{\Omega}_C}\frac{\partial w(\boldsymbol{x},t)}{\partial t}w(\boldsymbol{x},t) d\Omega + \int_{\Omega\backslash\bar{\Omega}_C} D\|\nabla w(\boldsymbol{x},t)\|^2 d\Omega = \int_{\partial\Omega_C} (D\nabla u_P(\boldsymbol{x},t)\cdot\boldsymbol{n} - \phi(\boldsymbol{x},t))w(\boldsymbol{x},t) d\Gamma\\
					\Rightarrow & \frac{\partial}{\partial t}\int_{\Omega\backslash\bar{\Omega}_C}\frac{1}{2} \|w(\boldsymbol{x},t)\|^2d\Omega + \int_{\Omega\backslash\bar{\Omega}_C} D\|\nabla w(\boldsymbol{x},t)\|^2 d\Omega = \int_{\partial\Omega_C} (D\nabla u_P(\boldsymbol{x},t)\cdot\boldsymbol{n} - \phi(\boldsymbol{x},t))w(\boldsymbol{x},t) d\Gamma.
				\end{aligned}
			\end{equation}
			The latter equation is precisely the desired expression \eqref{eq:L2-norm differnce}.
			Integrating both side with respect to time $t$ from $t = 0$ to any definite time $t = T$ and using that initial conditions are equal on $\Omega\setminus\Omega_C$ yields
			\begin{equation}
				\label{Eq_w_time_int}
				\frac{1}{2}\|w(\boldsymbol{x},t)\|^2_{L^2(\Omega\backslash\bar{\Omega}_C)} + D\int_0^T\|\nabla w(\boldsymbol{x},t)\|^2_{L^2(\Omega\backslash\bar{\Omega}_C)}dt = \int_0^T \int_{\partial\Omega_C}(D\nabla u_P(\boldsymbol{x},t)\cdot\boldsymbol{n} - \phi(\boldsymbol{x},t))w(\boldsymbol{x},t)d\Gamma dt
			\end{equation}
			From Equation (\ref{Eq_w_time_int}), if $D\nabla u_P(\boldsymbol{x},t)\cdot\boldsymbol{n} = \phi(\boldsymbol{x},t)$ over the boundary of the spatial exclusion $\partial\Omega_C$, then the right-hand side becomes zero, subsequently, Equation (\ref{Eq_w_time_int}) only holds when $\|w(\boldsymbol{x}, t)\|^2_{L^2(\Omega\backslash\bar{\Omega}_C)} = 0 = \|\nabla w(\boldsymbol{x}, t)\|^2_{L^2(\Omega\backslash\bar{\Omega}_C)}$, which implies $$w(\boldsymbol{x}, t)= u_S(\boldsymbol{x}, t) - u_P(\boldsymbol{x}, t) = 0,$$ 
			that is, the solutions to the two approaches are consistent in $\Omega\backslash\bar{\Omega}_C$.

			For the other statement, we consider the weak form of both approaches in $(WF_S)$ and $(WF_P)$ and let us take a test function $v_1(\boldsymbol{x},t)$ on $\Omega\setminus\Omega_C \times \R_+$. It can be extended to a test function on $v_2$ on $\Omega\times \R_+$,  which we shall denote by the same symbol if no confusion can arise. Then we obtain 
			\begin{align*}
				&\int_{\Omega\backslash\bar{\Omega}_C}\frac{\partial u_S(\boldsymbol{x},t)}{\partial t}v(\boldsymbol{x},t)d\Omega + \int_{\Omega\backslash\bar{\Omega}_C}D\nabla u_S(\boldsymbol{x},t)\nabla v(\boldsymbol{x},t)d\Omega - \int_{\partial\Omega_C}\phi(\boldsymbol{x},t)v(\boldsymbol{x},t)d\Gamma\\
				= &\int_{\Omega\backslash\bar{\Omega_C}}\frac{\partial u_P(\boldsymbol{x},t)}{\partial t}v(\boldsymbol{x},t)d\Omega+\int_{\Omega_C}\frac{\partial u_P(\boldsymbol{x},t)}{\partial t}v(\boldsymbol{x},t)d\Omega + \int_{\Omega\backslash\bar{\Omega_C}}D\nabla u_P(\boldsymbol{x},t)\nabla v(\boldsymbol{x},t)d\Omega\\
				&+\int_{{\Omega_C}}D\nabla u_P(\boldsymbol{x},t)\nabla v(\boldsymbol{x},t)d\Omega-\int_\Omega \Phi(\boldsymbol{x},t)\delta(\boldsymbol{x}-\boldsymbol{x}_c)v(\boldsymbol{x},t)d\Omega.
			\end{align*}
			First, we start with proving that, if $u_S(\boldsymbol{x},t) = u_P(\boldsymbol{x},t)$, then $\phi(\boldsymbol{x},t) - D\nabla u_P(\boldsymbol{x},t)\cdot\boldsymbol{n}= 0, \mbox{ on $\partial\Omega_C$}$, where $
			\boldsymbol{n}$ is the unit norm vector pointing towards the centre of $\Omega_C$. Hence, given that the solutions are equal, the above equation can be simplified and yields 
			\begin{align*}
				&-\int_{\partial\Omega_C}\phi(\boldsymbol{x},t)v(\boldsymbol{x},t)d\Gamma + \int_\Omega\Phi(\boldsymbol{x},t)\delta(\boldsymbol{x}-\boldsymbol{x}_c)v(\boldsymbol{x},t)d\Omega-\int_{\Omega_C}\frac{\partial u_P(\boldsymbol{x},t)}{\partial t}v(\boldsymbol{x},t)d\Omega\\
				&-\int_{{\Omega_C}}D\nabla u_P(\boldsymbol{x},t)\nabla v(\boldsymbol{x},t)d\Omega=0
			\end{align*}
			Since $\Omega_C$ is strictly embedded in $\Omega$, the partial differential equation in $(BVP_P)$ also holds for $\Omega_C$. Furthermore, as the cell center $\boldsymbol{x}_c$ is inside $\Omega_C$, it can be concluded that 
			$$\int_\Omega\Phi(\boldsymbol{x},t)\delta(\boldsymbol{x}-\boldsymbol{x}_c)v(\boldsymbol{x},t)d\Omega = \int_{\Omega_C}\Phi(\boldsymbol{x},t)\delta(\boldsymbol{x}-\boldsymbol{x}_c)v(\boldsymbol{x},t)d\Omega = \Phi(\boldsymbol{x}_c,t)v(\boldsymbol{x}_c,t).$$
			Then, the equation is rephrased as
			\begin{align*}
				&-\int_{\partial\Omega_C}\phi(\boldsymbol{x},t)v(\boldsymbol{x},t)d\Gamma + \int_\Omega\Phi(\boldsymbol{x},t)\delta(\boldsymbol{x}-\boldsymbol{x}_c)v(\boldsymbol{x},t)d\Omega-\int_{\Omega_C}D\Delta u_P(\boldsymbol{x},t)v(\boldsymbol{x},t)d\Omega\\
				&+\int_{\Omega_C}\Phi(\boldsymbol{x},t)\delta(\boldsymbol{x},t)v(\boldsymbol{x},t)d\Omega-\int_{{\Omega_C}}D\nabla u_P(\boldsymbol{x},t)\nabla v(\boldsymbol{x},t)d\Omega=0 \\
				\Rightarrow &-\int_{\partial\Omega_C}\phi(\boldsymbol{x},t)v(\boldsymbol{x},t)d\Gamma -\int_{\Omega_C}D\nabla\cdot(\nabla u_P(\boldsymbol{x},t) v(\boldsymbol{x},t))- D\nabla u_P(\boldsymbol{x},t)\nabla v(\boldsymbol{x},t) d\Omega\\
				&-\int_{{\Omega_C}}D\nabla u_P(\boldsymbol{x},t)\nabla v(\boldsymbol{x},t)d\Omega=0 \\ 
				\Rightarrow &\int_{\partial\Omega_C}(\phi(\boldsymbol{x},t) - D\nabla u_P(\boldsymbol{x},t)\cdot\boldsymbol{n})v(\boldsymbol{x},t) = 0.
			\end{align*}
			The last step is done by the Gaussian Theorem \citep{evans2010partial} and $\boldsymbol{n}$ is pointing towards the centre of $\Omega_C$. Note that $v(\boldsymbol{x},t)$ is a test function in $H^1$ space, hence, by DuBois-Raymond lemma \citep{van2005numerical}, we conclude that $$\phi(\boldsymbol{x},t) - D\nabla u_P(\boldsymbol{x},t)\cdot\boldsymbol{n} = 0, \mbox{ on $\partial\Omega_C$}.$$

			Hence, we proved that the solutions to both approaches are consistent in the domain $\Omega\backslash\bar{\Omega}_C$ if and only if the flux over the boundary of the hole is the same, that is, \[
			D\nabla u_P(\boldsymbol{x},t) = \phi(\boldsymbol{x},t), \mbox{ over $\partial\Omega_C$ and $t\geqslant0$.}
			\]
		\end{proof}
		
		\section{Properties of $\phi_{sum}$}
		\noindent
		Recall that in the setting of a single Dirac source at $\boldsymbol{x}_c$ with mass efflux rate $\Phi(t)$ in the infinitely extended space $\R^2$ and with initial condition equal to $p_0 P^D_{t_0}$, the flux density over the boundary of the hole, $\partial\Omega_C$, which is a circle of radius $R$ around $\boldsymbol{x}_c$, is given by
		\begin{equation}\label{eq:phi sum}
			\phi_{sum}(t) = \frac{p_0 R}{8\pi D(t+t_0)^2} \exp\left\{-\frac{R^2}{4D(t+t_0)}\right\} + \frac{\Phi}{2\pi R} \exp\left\{-\frac{R^2}{4Dt}\right\},
		\end{equation}
		when $\Phi$ is assumed constant in time. In this section we shall summarize various properties of $\phi_{sum}(t)$. The graphs of $\phi_{sum}(R,t)$ and that of $\phi_1(R,t)$, $\phi_2(R,t)$, i.e. the first and second term in \eqref{eq:phi sum}, respectively, are shown in Figure \ref{Fig_phi_12_sum} for $\phi(\boldsymbol{x},t) = 1$, $\Phi= \pi$, $t_0 = 4.0$ and $p_0$ determined by Equation (\ref{Eq_p0_t0}). That is, $\phi_{sum}(0)=\phi=1$.
		\begin{figure}[h!]
			\centering
			\includegraphics[width = 0.75\textwidth]{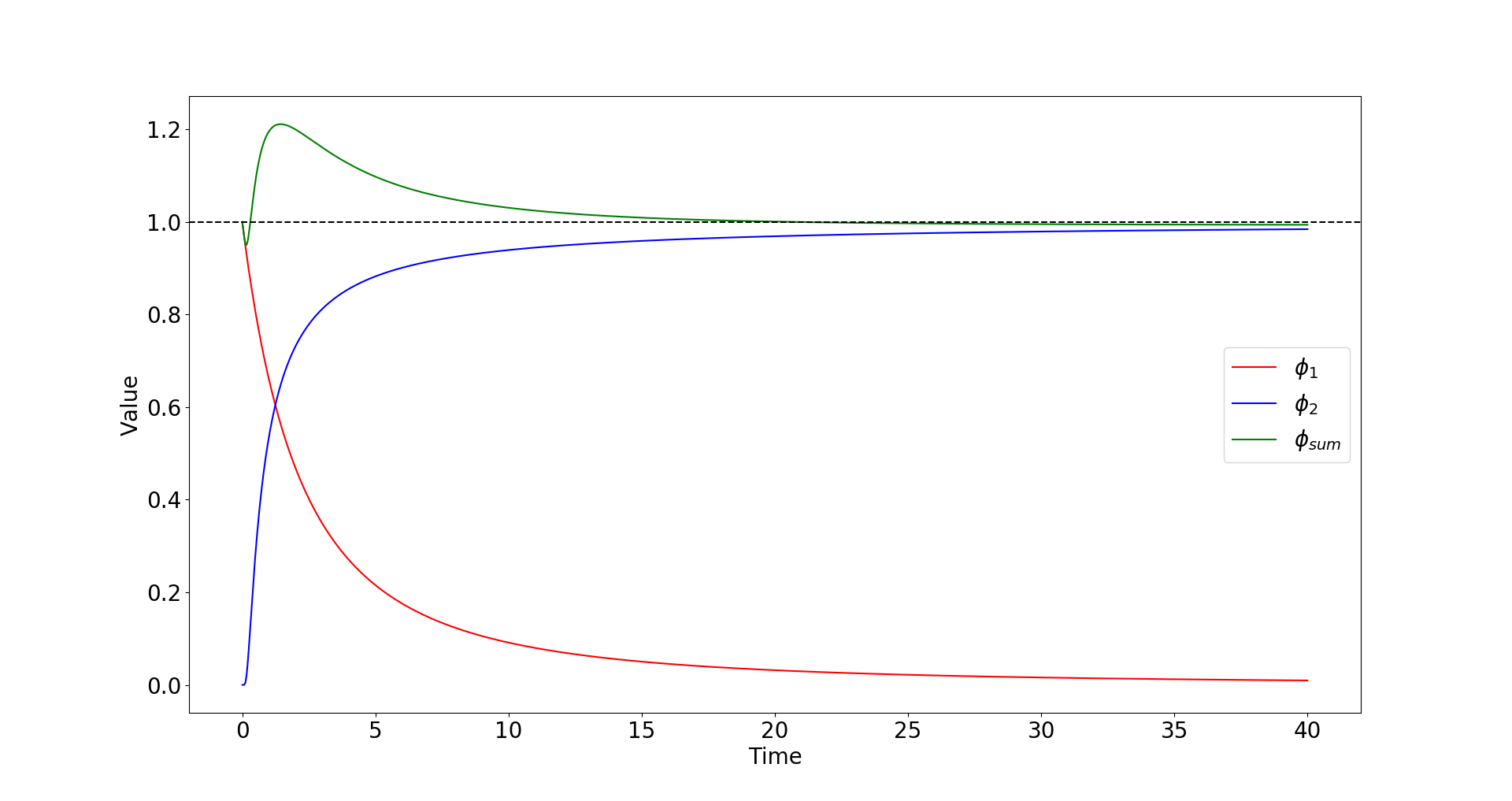}
			\caption{The graphs of $\phi_1$, $\phi_2$ and their summation, $\phi_{sum}$. Here, we use $t_0 = 4.0$, $\phi=1$ such that $\Phi=\pi$ and $p_0$ is computed by Equation (\ref{Eq_p0_t0}). Hence, $\phi_{sum}(0)\approx \phi$.}
			\label{Fig_phi_12_sum}
		\end{figure}
		It is immediately clear that
		\begin{equation}
			\phi_{sum}(0) = \frac{p_0 R}{8\pi D t_0},\qquad
			\lim_{t\to\infty} \phi_{sum}(t) = \frac{\Phi}{2\pi R}.
		\end{equation}
		Therefore, if one wants to abstain from any systematic deviation of flux -- hence between the solutions for spatial exclusion and point source approach in view of Proposition \ref{Prop_condition} -- then it is necessary to require
		\begin{equation}\label{eq:assymp condition on flux}
			\frac{\Phi}{2\pi R} = \phi,
		\end{equation}
		which condition we shall assume to be satisfied from now on. 
		\vskip 0.2cm
		
		Define
		\[
		\alpha := \frac{R^2}{4D},\qquad \beta:= \frac{p_0}{\pi R\phi},\qquad \gamma :=\beta\phi,
		\]
		such that
		\[
		\phi_{sum}(t) = \frac{p_0}{2\pi R} \alpha\,\frac{1}{(t+t_0)^2}\, e^{-\alpha/(t+t_0)}\ + \phi\,e^{-\alpha/t}\ = \ \mbox{$\frac{1}{2}$} \alpha\gamma\, \frac{1}{(t+t_0)^2}\, e^{-\alpha/(t+t_0)}\ + \phi\,e^{-\alpha/t}.
		\]
		A straightforward computation yields
		\begin{equation}\label{eq:deriv phi sum}
			\phi_{sum}'(t)\ =\ -\alpha\gamma\,\frac{t+t_0 -\frac{1}{2}\alpha}{(t+t_0)^4}\, e^{-\alpha/(t+t_0)}\ +\   \alpha\phi\,  \frac{1}{t^2}\, e^{-\alpha/t}.
		\end{equation}
		Define further
		\[ 
		g(t) := \frac{1}{t^2}e^{-\alpha/ t}\qquad\mbox{and}\qquad h(t) := \frac{t-\frac{1}{2}\alpha}{t^4} e^{-\alpha/ t}.
		\]
		Then, 
		\begin{equation}\label{eq:critical points phi-sum}
			\phi'_{sum}(t^*)=0\quad\mbox{{\it if and only} if}\quad g(t^*) = \beta \, h(t^*+t_0).
		\end{equation}
		
		One readily computes that
		\[
		g'(t) = -\frac{1}{t^4} e^{-\alpha/ t}(2t - \alpha)\quad\mbox{and}\quad h'(t) = -\frac{3}{t^6} e^{-\alpha/ t} \bigl( t^2 -\alpha t + \mbox{$\frac{1}{6}$} \alpha^2\bigr).
		\]
		Then, $g$ is a strictly positive function on $(0,\infty)$, with  $g(t)\to 0$ as $t\to\infty$ and $t\downarrow 0$. $g$ has a maximum value at $t=\frac{1}{2}\alpha$ and $g'(0)=0$. Similarly, $h(0)=0$ and $h(t)\to 0$ as $t\to\infty$. Moreover, $h(t)<0$ for $0<t<\frac{1}{2}\alpha$ and $h(t)>0$ for $t>\frac{1}{2}\alpha$. $h'(0)=0$ and there exists $0<t_-<t_+$ that solve $t^2 -\alpha t +\frac{1}{6}\alpha^2=0$. One has $t_\pm = \alpha(\frac{1}{2} \pm\frac{1}{3}\sqrt{3})$. Clearly, $h'(t)<0$ for $t>t_+$, so $h$ has a positive maximum value at $t_+>\frac{1}{2}\alpha$ and a negative minimum value at $t_-<\frac{1}{2}{\alpha}$.
		\vskip 0.2cm 
		
		\noindent We obtain:
		\begin{lemma}\label{lem:shape phi-sum}
			For $t_0>0$, $\phi_{sum}$ has either zero, one, two  or three critical points, where -- generically -- $\phi_{sum}$ changes sign. $\phi_{sum}(t)<\phi$ for $t$ sufficiently large. Case-by-case:
			\begin{enumerate}
				\item[({\it i})] If there are no critical points, then $\phi_{sum}$ has a minimum at $t=0$. Necessarily, $\phi_{sum}(0)<0$ and $\phi'_{sum}(0)>0$.
				\item[({\it ii})] If there is one critical point, then $\phi_{sum}$ has a minimum at this point $t=t_*>0$. Necessarily, $\phi_{sum}(t_*)<\phi$ and $\phi'_{sum}(0)<0$. $\phi_{sum}$ has a maximum at the boundary $t=0$. 
				\item[({\it iii})] If there are two critical point, then $\phi_{sum}$ has a minimum at some $t_*>0$ with $\phi(t_*)<\phi$ and a maximum at $0<t^*<t_*$. Necessarily, $\phi'_{sum}(0)>0$ and $\phi_{sum}$ has a minimum at the boundary point $t=0$.
				\item[({\it iv})]  If there are three critical point, then $\phi_{sum}$ has a minimum at some $t_{*,2}>0$ with $\phi(t_{*,2})<\phi$ and at $0<t_{*,1}<t_{*,2}$. Moreover, there is a maximum at $t^*$ with $t_{*,1}<t^*<t_{2,*}$.Necessarily, $\phi'_{sum}(0)<0$ and $\phi_{sum}$ has a maximum at the boundary point $t=0$.
			\end{enumerate}
		\end{lemma}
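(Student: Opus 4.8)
The plan is to extract the entire classification from the sign of $\phi_{sum}'$. By Equation \eqref{eq:deriv phi sum} and the relation $\gamma=\beta\phi$, the derivative factors as $\phi_{sum}'(t)=\alpha\phi\,[\,g(t)-\beta\,h(t+t_0)\,]$, so the critical points are exactly the solutions of $g(t)=\beta h(t+t_0)$, which is Equation \eqref{eq:critical points phi-sum}. Before anything else I would record the identity $g'(t)=-2h(t)$, which is immediate by comparing the displayed formulas for $g'$ and $h$ and which ties the two auxiliary functions together. Since $g>0$ throughout $(0,\infty)$, a critical point can occur only where $h(t+t_0)>0$, i.e. where $t+t_0>\tfrac12\alpha$; this observation already separates the analysis according to the sign of $t_0-\tfrac12\alpha$, and on the sub-interval $(0,\tfrac12\alpha-t_0)$ (present only when $t_0<\tfrac12\alpha$) we have $h(t+t_0)<0$, hence $\phi_{sum}'>0$ and no critical points there.

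Next I would compute the boundary data of the sign function. Since $g(0^+)=0$, we get $\phi_{sum}'(0^+)=-\alpha\phi\beta\,h(t_0)$, whose sign is opposite to that of $h(t_0)$: negative when $t_0>\tfrac12\alpha$, positive when $t_0<\tfrac12\alpha$. At the other end $g(t)\sim t^{-2}$ dominates $\beta h(t+t_0)\sim\beta t^{-3}$, so $\phi_{sum}'(t)>0$ for all large $t$, i.e. $\phi_{sum}(t)\uparrow\phi$ from below, which yields the asserted $\phi_{sum}(t)<\phi$ for large $t$. To count crossings cleanly I would set $Q(t):=g(t)/h(t+t_0)$ on the region $h(t+t_0)>0$, so that critical points are the levels $Q(t)=\beta$. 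Its endpoint values decide the parity: $Q\to 0$ (when $t_0\geq\tfrac12\alpha$, since $g(0^+)=0$) or $Q\to+\infty$ (at the edge $t=\tfrac12\alpha-t_0$ when $t_0<\tfrac12\alpha$, where $h(t+t_0)\downarrow 0$), together with $Q\to+\infty$ as $t\to\infty$, force an even number of crossings when $t_0<\tfrac12\alpha$ and an odd number when $t_0>\tfrac12\alpha$. This is precisely what separates cases (i),(iii) from (ii),(iv).

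The main obstacle is the upper bound of three critical points, i.e. that $y=\beta$ meets the graph of $Q$ at most three times. I would obtain it by bounding the turning points of $Q$: zeros of $Q'$ coincide with solutions of $(\ln g)'(t)=(\ln h)'(t+t_0)$, and since $(\ln g)'(t)=(\alpha-2t)/t^2$ and $(\ln h)'(t)=(t-\tfrac12\alpha)^{-1}-4t^{-1}+\alpha t^{-2}$ are rational, clearing denominators turns this into a single polynomial equation of degree four (the leading coefficients do not cancel beyond degree four). The qualitative data already in hand — $g$ with one interior maximum at $\tfrac12\alpha$, and $h$ with a minimum at $t_-$ and a maximum at $t_+$ — should let me show that at most two of these algebraic roots lie in the admissible region $\{\,t>0,\ t+t_0>\tfrac12\alpha\,\}$, hence $Q$ has at most two admissible turning points and at most three crossings. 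I expect the delicate bookkeeping of discarding the inadmissible roots, rather than anything conceptual, to be the crux; combining the bound ``$\leq 3$'' with the parity of the previous step pins the count at exactly $0,1,2,3$.

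Finally I would translate each count into the shape statement by reading the sign sequence of $\phi_{sum}'$, which begins with the sign of $-h(t_0)$ and ends positive. No crossing with $\phi_{sum}'(0^+)>0$ gives monotone increase and a boundary minimum at $t=0$ (case (i)); one crossing with $\phi_{sum}'(0^+)<0$ gives the sequence $-,+$, an interior minimum preceded by a boundary maximum (case (ii)); the sequences $+,-,+$ and $-,+,-,+$ give the interlaced maxima and minima of cases (iii) and (iv). The value comparisons follow from the endpoint behaviour: to the right of the rightmost minimum $\phi_{sum}$ increases to $\phi$, so that minimum lies below $\phi$, while the boundary value is the explicit $\phi_{sum}(0^+)=p_0R/(8\pi D t_0)$ read off from Equation \eqref{eq:phi sum}, which is below $\phi$ by the same monotonicity. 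Throughout I would invoke the genericity of transversal crossings, as already flagged in the statement, to discard degenerate tangencies.
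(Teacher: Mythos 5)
Your proposal follows essentially the same route as the paper's proof: characterise critical points by $g(t)=\beta h(t+t_0)$ as in \eqref{eq:critical points phi-sum}, use the decay of $h(t+t_0)/g(t)$ to get $\phi_{sum}'(t)>0$ and hence $\phi_{sum}(t)<\phi$ for large $t$, and then read the case-by-case shape off the sign sequence of $\phi_{sum}'$. In several places you are more explicit than the paper: the identity $g'=-2h$, the computation $\phi_{sum}'(0^+)=-\alpha\phi\beta\,h(t_0)$ linking the boundary derivative's sign to $t_0\gtrless\tfrac12\alpha$, and above all the parity argument via $Q(t)=g(t)/h(t+t_0)$ with its endpoint limits, which cleanly explains why cases ({\it i}),({\it iii}) pair with $\phi_{sum}'(0)>0$ and ({\it ii}),({\it iv}) with $\phi_{sum}'(0)<0$ — something the paper's proof leaves implicit. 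Your case translations and the value comparisons $\phi_{sum}(t_*)<\phi$ are all correct (and you correctly read the lemma's ``$\phi_{sum}(0)<0$'' as a typo for ``$<\phi$'').

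The one substantive gap is the upper bound of \emph{three} critical points. You reduce it to showing that the quartic obtained from $(\ln g)'(t)=(\ln h)'(t+t_0)$ has at most two roots in the admissible region $\{t>0,\ t+t_0>\tfrac12\alpha\}$, but you explicitly defer the root-discarding, so as written the count ``$0,1,2,3$'' is not established — a priori the quartic could leave $Q$ with three admissible turning points and hence four level crossings. To be fair, the paper's own proof does no better here: it simply asserts that the count ``follows from the qualitative properties of $g$ and $h$.'' So your proposal reproduces the paper's argument, strengthens its soft parts, and identifies precisely the step that both treatments leave unproven; to close it you would actually have to carry out the sign analysis of the quartic (or find a monotonicity argument for $Q$ on each side of its turning points) rather than expect it to work out. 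A minor point: $\phi_{sum}(0^+)$ equals $\tfrac{p_0R}{8\pi Dt_0^2}\exp\{-R^2/(4Dt_0)\}$ by \eqref{eq:phi sum}, not $p_0R/(8\pi Dt_0)$ (you copied a typo from the appendix), but this does not affect your argument since only the comparison with $\phi$, which you obtain from monotonicity, is used.
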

		
		\begin{proof}
			The claim on the number of critical points follows from the qualitative properties of $g(t)$ and $h(t)$ described above and the critical point characterisation in Equation \eqref{eq:critical points phi-sum}. The intersection of $g(t)$ and $\beta h(t+t_0)$ are transversal, generically. Therefore, $\phi'_{sum}(t)$ will change sign. One has
			\[
			\phi'_{sum}(t)\ =\ -\alpha\gamma h(t+t_0) + \alpha\phi g(t)\ = \ 
			\alpha\phi g(t)\left[ 1-\beta\, \frac{h(t+t_0)}{g(t)}\right].
			\]
			Moreover,
			\[
			\frac{h(t+t_0)}{g(t)}\ =\ \frac{t^2(t+t_0-\frac{1}{2}\alpha)}{(t+t_0)^4}\, \exp\left(\frac{\alpha t_0}{t(t+t_0} \right)\ \to 0\qquad\mbox{as}\ t\to\infty.
			\]
			Hence, $\phi'_{sum}(t)>0$ for $t$ sufficiently large. Since $\phi_{sum}(t)\to\phi$ as $t\to\infty$, one must have $\phi_{sum}(t)<\phi$ for $t$ large. In case $({\it i})$ the boundary point must then be a minimum with $\phi_{sum}(0)<\phi$. In the other cases there must exist a minimum at a largest $t$-value $t_*$ or $t_{*,2}>0$ with $\phi_{sum}(t_{*,2})<\phi$
		\end{proof}
		
		Note that Figure \ref{Fig_phi_12_sum} shows an example of case ({\it iv}). It can be seen by careful inspection of the graph that indeed, $\phi_{sum}(t)<\phi=1$ for $t$ sufficiently large.

		\section{Locations of multiple cells in Section \ref{Sec_number_of_holes}}
		\label{app_sec:location cells}
		\noindent
		In Section \ref{Sec_number_of_holes}, the impact of the influence from other cells was discussed. Figure \ref{Fig_mesh_number_of_cells} show the locations of the cells corresponding to the cases in Figure \ref{Fig_multi_holes}.
		\begin{figure}[h!]
			\centering
			\subfigure[One cell]{
				\includegraphics[width=0.475\textwidth]{mesh_hole.png}
				\label{fig_one_hole}}
			\subfigure[Two cell with smaller distance]{
				\includegraphics[width=0.48\textwidth]{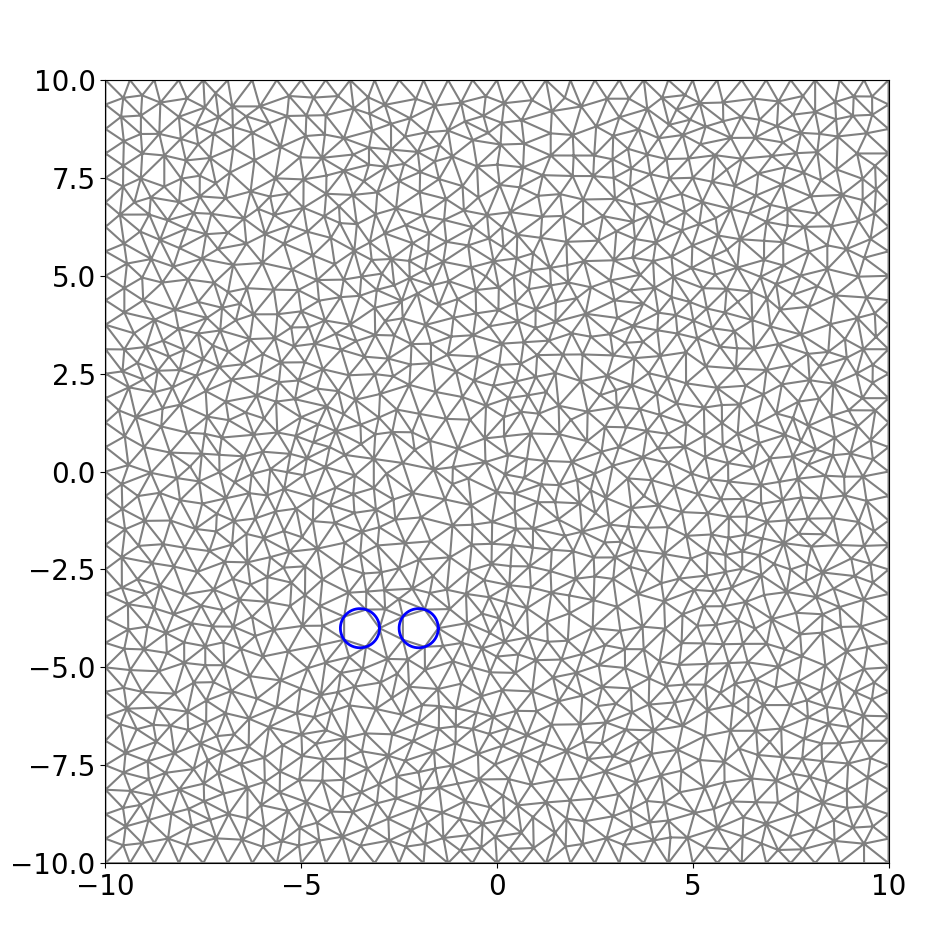}
				\label{fig_two_asys_holes}}
			\subfigure[Two cell with larger distance]{
				\includegraphics[width=0.48\textwidth]{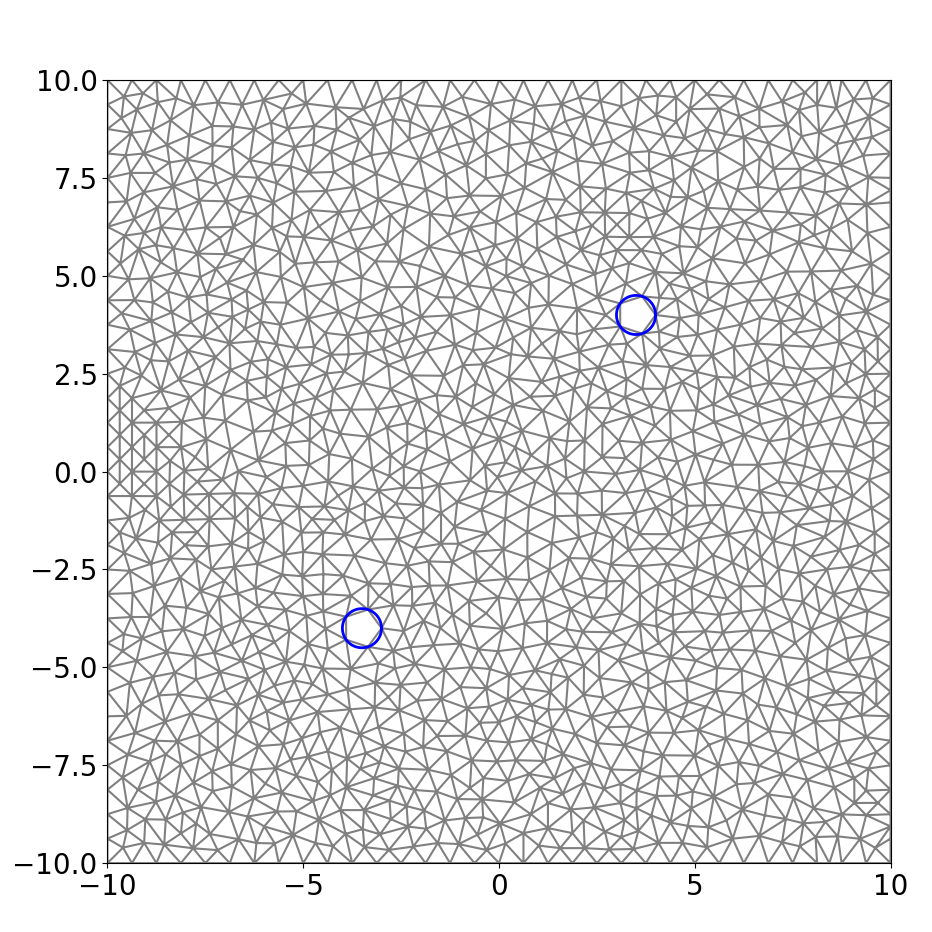}
				\label{fig_two_sys_holes}}
			\subfigure[Ten cells]{
				\includegraphics[width=0.48\textwidth]{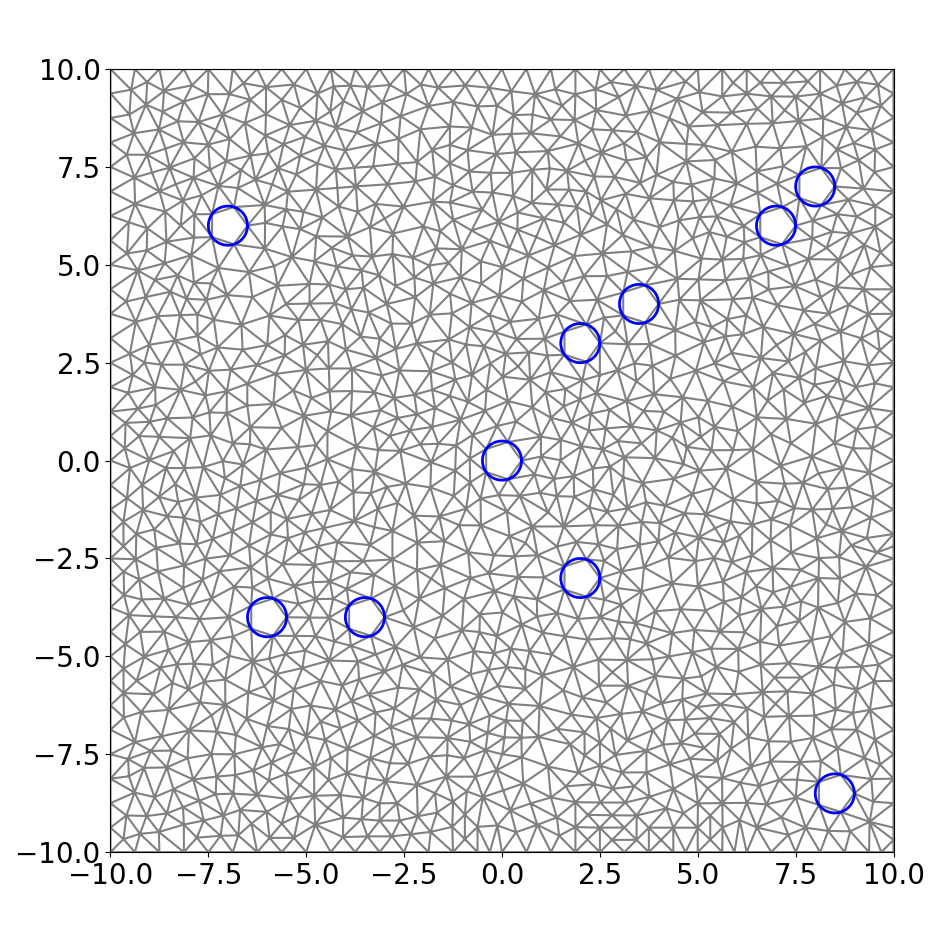}
				\label{fig_ten_holes}}
			\caption{The cell locations on the domain corresponding to Section \ref{Sec_number_of_holes}. For every case, we always have the cell, of which the center is $(-3.5, -4)$ and radius $\frac{1}{2}$. For this cell the measure $c^*(t)$ is computed in the comparison in Section \ref{Sec_number_of_holes}. To emphasize the locations, we show the schematics of the spatial exclusion approach with a coarse mesh.}
			\label{Fig_mesh_number_of_cells}
		\end{figure}
	\end{appendix}

\end{document}